\author{Akash Singha Roy}
\address{Department of Mathematics \\ University of Georgia \\ Athens, GA 30602}
\email{akash01s.roy@gmail.com}
\subjclass[2020]{Primary 11A25; Secondary 11N36, 11N37, 11N64, 11N69}
\renewcommand\phi\varphi
\renewcommand{\pod}[1]{\allowbreak\mathchoice
  {\if@display \mkern 18mu\else \mkern 8mu\fi (#1)}
  {\if@display \mkern 18mu\else \mkern 8mu\fi (#1)}
  {\mkern4mu(#1)}
  {\mkern4mu(#1)}
}
\DeclareMathAlphabet{\curly}{U}{rsfs}{m}{n}
\newcommand{\F}{\mathbb{F}}
\newcommand\Z{\mathbb{Z}}
\newcommand\NatNos{\mathbb N}
\renewcommand\v{\mathbf{v}}
\newcommand\Q{\mathbb{Q}}
\newtheorem{thm}{Theorem}[section]
\newtheorem{cor}[thm]{Corollary}
\newtheorem{prop}[thm]{Proposition}
\newtheorem{lem}[thm]{Lemma}
\theoremstyle{remark}
\newcommand\ord{\mathrm{ord}}
\newcommand\Fell{\F_\ell}
\newcommand\FellT{\F_\ell[T]}
\newcommand\logxK{(\log x)^K}
\newcommand\Qg{\mathcal Q_g}
\newcommand\Qgfam{\mathcal Q_{(g_1, \dots , g_M)}}
\newcommand\Dmin{D_{\text{min}}}
\newcommand\CGsub{C_{\widehat G}}
\newcommand\sm\setminus
\newcommand\COG{C_0(\widehat G)}
\newcommand\CG{C(\widehat{G})}
\newcommand\VqmPr{\mathcal V_{q, m}'}
\newcommand\VQZmPr{\mathcal V_{Q_0, m}'}
\newcommand\VJMqbigim{\mathcal V_{J, M}\left(q; (b_i-g_i(m))_{i=1}^M\right)}
\newcommand\VJMqwi{\mathcal V_{J, M}\left(q; (w_i)_{i=1}^M\right)}
\newcommand\VNMqwi{\mathcal V_{N, M}\left(q; (w_i)_{i=1}^M\right)}
\newcommand\VNMQZwi{\mathcal V_{N, M}\left(Q_0; (w_i)_{i=1}^M\right)}
\newcommand\VJMQZwi{\mathcal V_{J, M}\left(Q_0; (w_i)_{i=1}^M\right)}
\newcommand\largesum{\mathlarger{\sum}}
\newcommand\largeprod{\mathlarger{\prod}}
\newcommand\qtil{\widetilde q}
\newcommand\chiZqtil{\chi_{0, \qtil}}
\newcommand\xiG{\xi_{\widehat{G}}}
\newcommand\MDOne{MD+1}
\newcommand\OmStqn{\Omega^*_{>q}(n)}
\newcommand\OmStEN{\Omega^*_E(N)}
\newcommand\GiPrSet{\{G_i'\}_{1 \le i \le M}}
\newcommand\critsetAFell{\mathcal A_{F, \ell}}
\newcommand\chiZell{\chi_{0, \ell}}
\newcommand\tellF{t_\ell(F)}
\newcommand\ordell{\ord_\ell}
\newcommand\ordellFPr{\ord_\ell(F')}
\newcommand\Ftil{\widetilde{F}}
\newcommand\FtilT{\Ftil(T)}
\newcommand\Ftilv{\Ftil(v)}
\numberwithin{equation}{section}
\begin{document}
\title[Equidistribution of families of polynomially-defined additive functions]{Joint distribution in residue classes of families of polynomially-defined additive functions}
\begin{abstract} 
Let $g_1, \dots , g_M$ be additive functions for which there exist nonconstant polynomials $G_1, \dots , G_M$ satisfying $g_i(p) = G_i(p)$ for all primes $p$ and all $i \in \{1, \dots , M\}$. Under fairly general and nearly optimal hypotheses, we show that the functions $g_1, \dots , g_M$ are jointly
equidistributed among the residue classes to moduli $q$ varying uniformly up to a fixed but arbitrary power of $\log x$. Thus, we obtain analogues of the Siegel-Walfisz Theorem for primes in arithmetic progressions, but with primes replaced by values of such additive functions. 
Our results partially extend work of Delange from fixed moduli to varying moduli, and also generalize recent work done for a single additive function. 
\end{abstract}
\keywords{additive function, uniform distribution, equidistribution, joint distribution, joint equidistribution}
\maketitle
\section{Introduction}  
We say that an integer-valued arithmetic function $g$ is \textsf{uniformly distributed} (or \textsf{equidistributed}) modulo $q$ if 
\begin{equation}\label{eq:equiddef}
\#\{n \le x: g(n)\equiv b\pmod q\} \sim \frac xq \quad\text{as $x\to\infty$},
\end{equation}
for each residue class $b\bmod q$. As a nontrivial example, it is a result due to Pillai \cite{pillai40} that the function $\Omega(n):=\sum_{p^k\parallel n} k$ counting the prime factors of $n$ with multiplicity is uniformly distributed modulo any positive integer $q$. For general additive functions, a satisfactory characterization was obtained by Delange \cite{delange69} in 1969 for when an additive function $g$ is uniformly distributed to a fixed integer modulus $q$: his criterion involved the sums $\sum_{p:~d \nmid f(p)} 1/p$ for divisors $d>1$ of $q$ (we state the result precisely in the next section). This result shows, for instance, that the function $A(n) \coloneqq \sum_{p^k \parallel n} kp$ (the sum of the prime divisors of $n$ counted with multiplicity) is equidistributed among the residue classes of any fixed integer modulus. 

We say that a family $g_1, \dots , g_M$ of integer-valued arithmetic functions is \textsf{jointly equidistributed} modulo $q$ if 
\[ \#\{n \le x: \forall i \in [M], ~ ~ g_i(n)\equiv b_i\pmod q\} \sim \frac{x}{q^M} \quad\text{as $x\to\infty$},\]
for all residues $b_1, \dots , b_M \bmod q$. (Here $[M]$ denotes the set $\{1, \cdots, M\}$.) One can similarly ask whether it is possible to characterize families of additive functions $g_1, \dots , g_M$ that are jointly equidistributed to a fixed integer modulus $q$. Such a characterization was achieved by Delange in \cite{delange74} where he showed that the joint equidistribution of $g_1, \dots , g_M$ modulo $q$ is equivalent to the equidistribution of certain integral linear combinations of $g_1, \dots , g_M$ mod $q$; see Proposition \ref{prop:DelangeJoint} for the precise statement.

In all of the aforementioned results, the modulus $q$ is assumed fixed. A natural question is what happens when the modulus $q$ is allowed to vary; in particular, whether equidistribution continues to hold as $q$ varies uniformly in a suitable range depending on the stopping point of inputs (what we have been calling ``$x$"). A reasonable goal in such an investigation would be to seek analogues of the Siegel--Walfisz Theorem for primes in arithmetic progressions, according to which the primes up to $x$ are asymptotically equidistributed among the coprime residue classes modulo $q$, uniformly for $q$ varying up to any fixed power of $\log x$. In other words, it is reasonable to look for a version of the Siegel--Walfisz theorem, but with primes replaced by values of additive functions. 

In order to make things precise, we will say that given $K \ge 1$, an integer-valued arithmetic function $g$ is \textsf{equidistributed mod $q$ uniformly for $q \le (\log x)^K$} if the relation \eqref{eq:equiddef} holds uniformly in moduli $q \le (\log x)^K$ and in residue classes $b$ mod $q$. Explicitly, this means that for any $\epsilon>0$, there exists $X(\epsilon)>0$ such that the ratio of the left hand side of \eqref{eq:equiddef} to the right hand side lies in $(1-\epsilon, 1+\epsilon)$ for all $x>X(\epsilon)$, all $q \le (\log x)^K$ and all residue classes $b$ mod $q$. This definition extends naturally to families of arithmetic functions, and we analogously define what it means for a given family $g_1, \dots , g_M$ of arithmetic functions to be \textsf{jointly equidistributed mod $q$, uniformly for $q \le (\log x)^K$}. 

Our aim in this paper is to study this phenomenon of joint equidistribution (to uniformly varying moduli) for a large class of additive functions, namely those which can be defined by the values of a polynomial at the primes. We say that an additive function $g \colon \NatNos \rightarrow \Z$ is \textsf{polynomially-defined} if there exists a nonconstant polynomial $G \in \Z[T]$ satisfying $g(p)=G(p)$ for all primes $p$; we will then say that $g$ is \textsf{defined by (the polynomial)} $G$. For example, both the additive functions $\beta(n) \coloneqq \sum_{p \mid n} p$ and $A(n) = \sum_{p^k \parallel n} kp$ are defined by the polynomial $G(T) = T$. 

The equidistribution of a single polynomially-defined additive function with uniformity in modulus seems to have been first studied in \cite{PSR}. In that paper, Hal\'asz's mean value theorem is used to show that for any fixed $\delta>0$, the function $A(n)$ is equidistributed mod $q$ uniformly for $q\le (\log x)^{\frac12-\delta}$. In \cite{PSR23}, this has been improved to $q\le (\log x)^K$ for the function $A(n)$, the full range permitted by the Siegel--Walfisz theorem. The method relies on exploiting an ergodicity (or mixing) phenomenon in the multiplicative group mod $q$, and was primarily used in \cite{PSR23} to study the distribution of polynomially defined multiplicative functions among the coprime residue classes to moduli $q$ varying up to any fixed power of $\log x$. Recent work of Akande \cite{akande23} investigates the distribution of a single general polynomially-defined additive function (see the paragraph following the statement of Theorem \ref{thm:UnrestrictedInput}). To do this, he suitably modifies the method in \cite{PSR23} by means of certain exponential sum estimates.

In the first main result of this paper, we shall generalize the results in \cite{akande23} to families $g_1, \dots , g_M$ of additive functions defined by nonconstant polynomials $G_1, \dots , G_M \in \Z[T]$ respectively, thus extending Delange's work \cite{delange74} to uniformly varying moduli, for families of polynomially-defined additive functions. To this end, let $\Qgfam$ denote the set of moduli $q$ such that $g_1, \dots , g_M$ are jointly equidistributed mod $q$. Under general conditions, we will show that $g_1, \dots , g_M$ are also jointly equidistributed mod $q$ uniformly for $q \le \logxK$ lying in $\Qgfam$. For technical reasons to be elaborated on later (see Theorem \ref{thm:LIHNecess}), we will assume in our main results (Theorems \ref{thm:UnrestrictedInput}, \ref{thm:restrictediput_generalq} and \ref{thm:restrictediput_squarefreeq}) that the derivatives of $G_i$ are linearly independent over $\Q$. This amounts to assuming that no nontrivial $\Z$-linear combination of the $G_i$ reduces to a constant in $\Z[T]$, or in other words, that the polynomials $\{G_i(T)-G_i(0): 1 \le i \le M\} \subset \Q[T]$ are $\Q$-linearly independent. (For $M=1$, this simply amounts to $G_1$ being nonconstant.) In particular, this hypothesis forces the maximum of the degrees of the $G_i$ to be no less than $M$.

Our first main result shows that $g_1, \dots , g_M$ are jointly equidistributed to moduli $q$ lying in $\Qgfam$ varying uniformly up to a small power of $\log x$. In what follows, we denote by $D$ and $\Dmin$ the maximum and the minimum of the degrees of $G_1, \dots , G_M$ respectively,\footnote{The asymmetry in notation is due to the much greater frequency of the appearance of $D$ in our results, as compared to $\Dmin$.} so that by the above discussion, $D \ge M$. 
\begin{thm}\label{thm:UnrestrictedInput}
Fix $K \ge 1$, $\delta \in (0, 1]$ and an integer $M \ge 1$. Let $g_1, \dots , g_M$ be additive functions defined by the polynomials $G_1, \dots , G_M$ such that the polynomials $\{G_i'\}_{1 \le i \le M} \subset \Z[T]$ are $\Q$-linearly independent. Then $g_1, \dots , g_M$ are jointly equidistributed modulo $q$, uniformly for $q \le (\log x)^K$ lying in $\Qgfam$, under any of the following additional conditions.
\begin{enumerate}
    \item[(i)] $M=1$, and either $q$ is squarefree or $G_1$ is linear. 
    \item[(ii)] $M \ge 2$, $q \le (\log x)^{(1-\delta)/(M-1)}$, and either $q$ is squarefree or at least one of $G_1, \dots , G_M$ is linear.
    \item[(iii)] $q \le (\log x)^{(1-\delta)(M-1/\Dmin)^{-1}}$.
\end{enumerate}
\end{thm}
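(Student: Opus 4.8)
The plan is to reduce the joint equidistribution statement to the non-vanishing of certain twisted character sums, and then to control those sums by isolating the contribution of a cleverly chosen ``fixed'' set of primes whose multiplicative behavior mod $q$ can be analyzed via the ergodicity/mixing phenomenon in $(\Z/q\Z)^\times$ exploited in \cite{PSR23}, \cite{akande23}. Concretely, by orthogonality of characters on $(\Z/q\Z)^M$, the count $\#\{n \le x : g_i(n) \equiv b_i \pmod q ~\forall i\}$ equals $q^{-M}\sum_{\chi_1,\dots,\chi_M} \bar\chi_1(b_1)\cdots\bar\chi_M(b_M) \sum_{n \le x} \chi_1(g_1(n))\cdots\chi_M(g_M(n))$, where the $\chi_i$ run over additive characters mod $q$. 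The main term comes from the trivial tuple; the task is to show every nontrivial tuple contributes $o(x)$, \emph{uniformly} in $q \le (\log x)^K$ within $\Qgfam$. Since $g_1,\dots,g_M$ are additive, $\prod_i \chi_i(g_i(n))$ is a multiplicative function of $n$, so I would apply a Halász-type mean value theorem (as in \cite{PSR}, \cite{PSR23}): the sum is $o(x)$ unless the multiplicative function $n \mapsto \prod_i \chi_i(g_i(n))$ ``pretends'' to be $n^{it}$ for some bounded $t$, i.e.\ unless $\sum_{p \le x} \frac{1 - \Ree\big(\prod_i \chi_i(G_i(p)) p^{-it}\big)}{p}$ stays bounded.

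The heart of the argument is to rule out this pretentiousness. Here I would use the hypothesis that $\{G_i'\}$ is $\Q$-linearly independent, which (via Theorem \ref{thm:LIHNecess}, cited above) is exactly what prevents a nontrivial linear combination $\sum_i c_i G_i$ from being constant and thus what makes the joint problem genuinely $M$-dimensional. The strategy is: among primes $p \le x$, select a positive-density subset $\mathcal{P}$ on which the tuple $(G_1(p) \bmod q, \dots, G_M(p) \bmod q)$ is ``generic'' — this is where the exponential-sum estimates of \cite{akande23} enter, to guarantee that the polynomial values $G_i(p) \bmod q$ do not conspire into a thin set, even as $q$ grows. The linear independence of the derivatives ensures that on $\mathcal{P}$ the vector $(G_i(p))_i$ ranges over enough of $(\Z/q\Z)^M$ that no single additive character tuple $(\chi_1,\dots,\chi_M)$ can be simultaneously ``small'' (close to trivial) against all of them — forcing $\sum_{p \in \mathcal P} \frac{1 - \Ree(\prod_i \chi_i(G_i(p)))}{p} \gg \log\log x$ whenever $(\chi_1,\dots,\chi_M)$ is nontrivial and $q \in \Qgfam$. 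The restriction $q \in \Qgfam$ is used precisely to discard ``bad'' moduli for which even the fixed-modulus joint equidistribution fails, so that the mixing input is available. This yields cancellation $\sum_{n \le x}\prod_i \chi_i(g_i(n)) = o(x)$ for every nontrivial tuple, uniformly in the stated range.

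The three cases (i)--(iii) correspond to how large a range of $q$ the pretentiousness obstruction can be beaten in, and this is governed by a counting bound: the number of primes $p \le x$ with $G_i(p)$ in a prescribed residue class mod $q$ is $\approx \pi(x)\cdot \rho_i(q)/q$ where $\rho_i(q)$ counts roots of $G_i \bmod q$, and one needs the product structure over $i=1,\dots,M$ to not overwhelm $\log\log x = \log\log$(size of prime support). When $M=1$ (case (i)) there is no product and the full range $q \le (\log x)^K$ works; the squarefree-or-linear dichotomy handles the possibility of $G_1$ having many roots mod prime powers. When $M \ge 2$ the product over $i$ of the root-counting functions can be as large as $q^{M-1}$-ish in the worst case, which is why $q$ must be restricted to $(\log x)^{(1-\delta)/(M-1)}$ in case (ii); and case (iii) is the refinement using $\Dmin$ to bound the number of roots of each $G_i$ more sharply (a polynomial of degree $d$ has at most $d$ roots mod a prime, giving a $q^{1 - 1/d}$-type saving per factor), yielding the exponent $(M - 1/\Dmin)^{-1}$. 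So after setting up the character-sum framework and the Halász reduction, I would: (a) for each $i$, invoke the exponential-sum / root-counting estimates from \cite{akande23} to bound $\#\{p \le x : G_i(p) \equiv r \pmod q\}$; (b) combine these across $i$ using the $\Q$-linear independence of $\{G_i'\}$ to show the relevant $\log\log x$ lower bound on the pretentiousness sum for every nontrivial character tuple; (c) track the permissible range of $q$ through this estimate to obtain exactly the three cases; (d) assemble the pieces back through orthogonality. The main obstacle I anticipate is step (b): translating $\Q$-linear independence of the \emph{derivatives} into a robust, quantitative statement about the joint distribution of $(G_i(p))_i$ modulo composite $q$ — in particular handling prime-power moduli (hence the role of $\con$/$\inc$-type decompositions of $q$ hinted at by the macros), where Hensel-lifting behavior of the roots of $G_i$ can be delicate and is presumably the reason the squarefree case is cleaner and stated separately.
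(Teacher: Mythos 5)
There is a genuine gap, and it is quantitative rather than cosmetic: the Hal\'asz reduction at the heart of your plan cannot produce the savings the theorem demands. After orthogonality you need each (or at least the average) nontrivial twisted sum $\sum_{n\le x}\prod_i\chi_i(g_i(n))$ to be $o(x/q^M)$, and there are about $q^M$ such tuples. But Hal\'asz's theorem bounds a mean value by roughly $x(1+\mathcal D^2)e^{-\mathcal D^2}+x/\log x$, where $\mathcal D^2=\sum_{p\le x}\bigl(1-\Ree\,(\cdots)\bigr)/p \le (2+o(1))\log_2 x$; so even for a maximally non-pretentious function the bound is never better than about $x\log_2 x/\log x$. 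In case (i) you need savings of $q\le(\log x)^K$ for arbitrary $K$, and in cases (ii)--(iii) you need $q^M$, which exceeds $\log x$ once $\delta<1/M$; neither is attainable from Hal\'asz, no matter how strong a lower bound on the pretentiousness sum you prove in your step (b). This is exactly the limitation the paper records: the Hal\'asz-based argument of \cite{PSR} only reaches $q\le(\log x)^{1/2-\delta}$, and the passage to arbitrary powers of $\log x$ required abandoning that framework. A secondary issue: you attribute the three ranges (i)--(iii) to the pretentiousness/root-counting analysis, but in the paper they arise from an entirely different place, namely the count of ``inconvenient'' $n$ (those with few large exactly-dividing prime factors), where only one large prime is available to absorb the congruence conditions.

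For contrast, the paper's actual mechanism is as follows. One isolates the ``convenient'' $n$, which have $J=\lfloor\log_3 x\rfloor$ distinct prime factors exceeding $y=\exp((\log x)^{\delta/2})$, each exactly dividing $n$; applying the Siegel--Walfisz theorem once for each of these $J$ primes decouples the congruence conditions with error $O\bigl(x\exp(-c(\log x)^{\delta/4})\bigr)$, which beats every fixed power of $\log x$ --- this replaces the single application of a mean-value theorem and is what makes the full range $q\le(\log x)^K$ accessible. The resulting main term requires showing $\#\VJMqwi\approx\phi(q)^J/q^M$ up to a bounded modulus $Q_0$, which is Proposition \ref{prop:Vqwi_ReducnToBddModulus}, proved with the Weil and Cochrane--Zheng exponential sum bounds together with the Smith normal form of the coefficient matrix of $\{G_i'\}$ (this is where the $\Q$-linear independence of the derivatives is actually used, not via Theorem \ref{thm:LIHNecess}, which plays no role in the proof). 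The inconvenient $n$ are written as $mP$ with $P=P(n)>x^{1/\log_2 x}$, Brun--Titchmarsh is applied to $P$, and the three cases come from three bounds on $\xiG(q)=\max_w\#\mathcal V_{1,M}(q;(w_i))$: $O(1)$ when some $G_i$ is linear, $O(D_1^{\omega(q)})$ for squarefree $q$, and Konyagin's bound $O\bigl(q^{1-1/\Dmin}\bigr)$ in general; comparing $\xiG(q)\,x/q(\log x)^{1-2\delta/3}$ with $x/q^M$ yields exactly the ranges in (i), (ii) and (iii). Your proposal would need to be rebuilt on some such multi-prime decoupling device to have a chance of reaching the stated uniformity.
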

Subpart (i) and the special case $M=1$ of subpart (iii) are the main results in \cite{akande23}, but we have included them here in order to give a self-contained and unified treatment. These assertions will of course be automatically established by our method as well. However, our method is significantly different from \cite{akande23} and there are several additional ideas required to generalize these special cases to our theorem above.

In subsection \ref{subsec:OptimalityUnrestricted}, we shall show that the ranges of $q$ in the subparts of the above theorem are all essentially optimal. In the constructions described there, the obstructions to uniformity will come from the prime inputs $p$. Modifying the construction slightly, we could produce obstructions of the form $mp$ with $m$ fixed or even slowly growing with $x$. Our next two results point out that the inputs $n$ with too few `large' prime factors do indeed present the key obstructions to uniformity. In other words, we show that uniformity in $q$ up to an arbitrary power of $\log x$ can be restored by restricting the set of inputs $n$ to those having sufficiently many prime divisors (counted with multiplicty) exceeding $q$. 

To make this precise, we write $P(n)$ for the largest prime divisor of $n$, with the convention that $P(1)=1$. We set $P_1(n) \coloneqq P(n)$ and define, inductively, $P_k(n) \coloneqq P_{k-1}(n/P(n))$. Thus, $P_k(n)$ is the $k$th largest prime factor of $n$ (counted with multiplicity), with $P_k(n)=1$ if $\Omega(n) < k$. 
\begin{thm}\label{thm:restrictediput_generalq}
Fix $K, M \ge 1$ and let $g_1, \dots , g_M$ be additive functions defined by the polynomials $G_1, \dots , G_M$, such that $\{G_i'\}_{1 \le i \le M} \subset \Z[T]$ are $\Q$-linearly independent. Assume that $D = \max_{1 \le i \le M} \deg G_i \ge 2$. We have 
\begin{multline*}
\#\{n \le x: P_{MD+1}(n)>q, ~ ~(\forall i) ~ g_i(n) \equiv b_i \pmod q\}\\
\sim \frac1{q^M} \#\{n \le x: P_{MD+1}(n)>q\} \sim \frac x{q^M} \quad\text{ as $x\to\infty$},
\end{multline*} 
uniformly in moduli $q \le (\log x)^K$ lying in $\Qgfam$, and in residue classes $b_1, \dots , b_M$ mod $q$. 
\end{thm}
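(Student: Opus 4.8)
The plan is to reduce the restricted count to a character sum over $(\Z/q\Z)^M$ and exploit the extra $MD+1$ large prime factors to save in the nontrivial characters, uniformly in $q \le (\log x)^K$. Writing $e_q(\cdot)$ additively, orthogonality gives
\[
\#\{n \le x: P_{MD+1}(n)>q,\ g_i(n)\equiv b_i\ (\forall i)\}
= \frac1{q^M}\sum_{(a_1,\dots,a_M)\in(\Z/q\Z)^M}\ \overline{\prod_i e_q(a_ib_i)}\sum_{\substack{n\le x\\ P_{MD+1}(n)>q}}\prod_i e_q\!\bigl(a_ig_i(n)\bigr).
\]
The term $(a_1,\dots,a_M)=(0,\dots,0)$ contributes $\frac1{q^M}\#\{n\le x: P_{MD+1}(n)>q\}$, which is the claimed main term — the hypothesis $D\ge 2$ and standard smooth-number / Mertens-type estimates show this is $\sim x/q^M$ and, more importantly, that it is $\gg x$, so it is genuinely a main term against which the error must be beaten. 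Since $g_1,\dots,g_M$ are additive and $h(n):=\prod_i e_q(a_ig_i(n))$ is therefore multiplicative with $|h|\le 1$, everything rests on showing
\[
S_{\mathbf a}(x):=\sum_{\substack{n\le x\\ P_{MD+1}(n)>q}}\prod_{i=1}^M e_q\!\bigl(a_ig_i(n)\bigr) = o\!\Bigl(\frac{x}{(\log x)^{K}(\log\log x)^{C}}\Bigr)
\]
for every nonzero $\mathbf a=(a_1,\dots,a_M)$, uniformly in $q\le(\log x)^K$ — summing this over the $q^M-1\le(\log x)^{KM}$ nonzero tuples must still be $o(x/q^M)$, so one actually needs a power-of-$\log$ saving beyond what Siegel–Walfisz-type heuristics demand; this is where the $MD+1$ large primes are essential.

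The key step is to peel off the large prime factors. Write $n=P_1(n)\cdots P_{MD}(n)\cdot m$ where $P_1,\dots,P_{MD}$ are the $MD$ largest primes (each $>q$ by hypothesis, after noting $P_{MD+1}(n)>q\Rightarrow P_j(n)>q$ for $j\le MD$) and $m$ collects the rest. Using additivity, $\prod_i e_q(a_ig_i(n)) = \prod_i e_q(a_ig_i(m))\cdot\prod_{j=1}^{MD}\prod_i e_q\!\bigl(a_iG_i(P_j)\bigr)$. Fixing $m$ and the smaller primes, the inner sum over (say) the largest prime variable $P$ in a dyadic range becomes an incomplete character/exponential sum of the shape $\sum_{P} \prod_i e_q(a_iG_i(P))$ over primes $P$ in a short interval, which — since $\{G_i'\}$ are $\Q$-linearly independent, so $\sum_i a_iG_i$ is nonconstant mod $q$ for $\mathbf a\ne\mathbf 0$ (this is exactly the role of Theorem~\ref{thm:LIHNecess}'s hypothesis) — one bounds by exponential-sum input (Weil-type bounds for squarefree $q$, or the general estimates à la \cite{akande23}) to gain a factor like $q^{-c}$ or at least a small power saving over the trivial bound $\sum_P 1$. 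Iterating this over all $MD$ large prime slots, one accumulates a saving of the form $(\text{small power of }q)^{MD}$ or, in the regime where only a $(\log q)$-type saving per prime is available, a factor $(c/\log q)^{MD}$ or $q^{-MD\cdot c}$; the point is to arrange the number of peeled primes $MD$ large enough that the total saving beats both $\log^{K}$ and the $q^{M}$ from the character sum, for all $q$ up to $(\log x)^{K}$. The choice $MD+1$ (rather than, say, $M+1$) is precisely calibrated so that each of the $M$ "copies" needed to handle $(\Z/q\Z)^M$ gets $D$ primes — enough degrees of freedom to overcome the polynomial degree $D$ in the exponential-sum estimates.

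The main obstacle is making the per-prime saving uniform in $q$ and simultaneously controlling the sum over $m$ (the smooth-ish part): one must sum $|S_{\mathbf a}|$ after peeling, which introduces a divisor-type factor counting the number of ways to order the large primes and a sum $\sum_m 1$ over $m\le x/(\text{large primes})$, and one has to check that these do not erode the power saving — this is the classical difficulty in Halász/fundamental-lemma arguments with restricted inputs and is handled by Rankin's trick or by a Shiu-type bound for the multiplicative function $|h|\le 1$, together with the observation that the number of $n\le x$ with $P_{MD+1}(n)>q$ is itself $\asymp x$ so no loss there. A secondary technical point: one needs the exponential sum $\sum_{P\in I}\prod_i e_q(a_iG_i(P))$ over primes to be genuinely small even when $q$ is a high prime power and $\sum_i a_iG_i$ may be divisible by high powers of a prime dividing $q$; this is exactly why the squarefree-$q$ and linear-$G_i$ hypotheses appear in Theorem~\ref{thm:UnrestrictedInput}, but in the present restricted-input theorem the abundance of large primes lets one absorb such degenerate cases by a pigeonhole/counting argument rather than a clean cancellation estimate. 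Modulo these points, assembling the dyadic decompositions, summing over $m$, and then over the nonzero $\mathbf a$, yields the stated asymptotic uniformly for $q\le(\log x)^K$ in $\Qgfam$.
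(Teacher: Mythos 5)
There are two genuine gaps. First, your central claim --- that $S_{\mathbf a}(x)=o(x/(\log x)^{KM})$ for \emph{every} nonzero tuple $\mathbf a$ --- is false, and no amount of large prime factors can rescue it. If $\ell^e$ is a small prime power dividing $q$ for which the polynomial $\sum_i a_i(G_i(T)-G_i(0))$ reduces to a constant mod $\ell^e$ (this happens for primes $\ell$ below the constant in \eqref{eq:COGDefn}, and nothing in the hypothesis $q\in\Qgfam$ rules such $\ell$ out), then for a tuple $\mathbf a$ supported on $q/\ell^e$-multiples the factor $\prod_i e_q(a_iG_i(P))$ is \emph{identically} $1$ on the large primes $P$, so peeling them off produces zero cancellation; the sum $S_{\mathbf a}$ is then governed entirely by the behaviour of the $g_i$ at small primes and at higher prime powers (where $g_i(p^k)$ is unconstrained), and its smallness is exactly the statement that the $g_i$ are jointly equidistributed modulo that bounded divisor --- i.e.\ it can only come from invoking $q\in\Qgfam$, which your argument never uses. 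Your suggestion that such degenerate characters can be ``absorbed by a pigeonhole/counting argument'' is not tenable; the theorem is simply false without the hypothesis $q\in\Qgfam$, so any correct proof must route these characters through it. The paper does this by isolating a bounded divisor $Q_0$ of $q$ supported on the small primes (Proposition \ref{prop:Vqwi_ReducnToBddModulus}) and reducing the main term to the congruence count mod $Q_0$, to which equidistribution mod $Q_0$ is applied (Proposition \ref{prop:convenientprop}).

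Second, your peeling step $g_i(n)=g_i(m)+\sum_{j\le MD}G_i(P_j)$ is invalid when the large prime factors repeat: if $p^k\parallel n$ with $k\ge 2$ then $g_i(p^k)$ is an arbitrary integer, not $kG_i(p)$, so for such $n$ no exponential-sum cancellation over the large primes is available at all. These inputs are not negligible a priori and must be counted separately; the paper's sums $\Sigma_1,\Sigma_2,\Sigma$ in Section \ref{sec:CompleteUnifGenq} do exactly this, trading lost congruence conditions for the observation that a repeated prime factor $>q$ forces a large squarefull divisor ($>q^2$, $>q^4$, $>q^{2M+1}$ in the respective ranges), and this bookkeeping --- not only the calibration ``$D$ primes per congruence'' that you correctly intuit from the $q^{-1/D}$-type Cochrane--Zheng saving --- is what fixes the threshold at $P_{MD+1}(n)>q$. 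Two smaller points: Weil/Cochrane--Zheng bound \emph{complete} sums over residues, so sums over primes must first be reduced to residue classes via Siegel--Walfisz or Brun--Titchmarsh (fixable, and essentially what the paper does through the sets $\mathcal V_{N,M}$ and the bound \eqref{eq:ConvenientPropConseq}); and the main term $\frac1{q^M}\#\{n\le x: P_{MD+1}(n)>q\}$ does not fall out of the zero character alone once the degenerate characters are grouped correctly --- it emerges after the reduction to $Q_0$, which is another reason the character-by-character framing as stated cannot be completed.
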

Here we omit the possibility $D=1$, as in this case, the fact that $D \ge M$ forces $M=1$, putting us in the setting of Theorem \ref{thm:UnrestrictedInput}(i), where we already have complete uniformity in $q$. For squarefree moduli $q$, it turns out that a much weaker restriction on the inputs suffices: we need only assume that $n$ has at least twice as many prime factors (counted with multiplicity) exceeding $q$ as the number $M$ of additive functions considered. 
\begin{thm}\label{thm:restrictediput_squarefreeq}
Fix $K \ge 1$, $M \ge 2$ and let $g_1, \dots , g_M$ be additive functions defined by the polynomials $G_1, \dots , G_M$, such that $\{G_i'\}_{1 \le i \le M} \subset \Z[T]$ are $\Q$-linearly independent. We have 
\begin{multline*}
\#\{n \le x: P_{2M}(n)>q, ~ ~ (\forall i) ~ g_i(n) \equiv b_i \pmod q\}\\
\sim \frac1{q^M} \#\{n \le x: P_{2M}(n)>q\} \sim \frac x{q^M} \quad\text{ as $x\to\infty$},
\end{multline*}
uniformly in squarefree $q \le (\log x)^K$ lying in $\Qgfam$, and in residues $b_1, \dots , b_M$ mod $q$.
\end{thm}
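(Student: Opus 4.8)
The plan is to run the same engine used for Theorems~\ref{thm:UnrestrictedInput} and~\ref{thm:restrictediput_generalq}, and to show that squarefreeness of $q$ lets one get away with only $2M$ large prime factors. Write $e(t) = e^{2\pi i t}$ and $e_q(t) = e(t/q)$. By orthogonality of the additive characters of $(\Z/q\Z)^M$, for each fixed tuple $(b_i)$,
\[
\#\{n \le x:\, P_{2M}(n)>q,\; (\forall i)\; g_i(n)\equiv b_i\!\!\pmod q\} \;=\; \frac1{q^M}\sum_{\mathbf a \bmod q} e_q\!\Big(\!-\!\sum_i a_i b_i\Big)\,S(\mathbf a),
\]
where $S(\mathbf a) := \sum_{n \le x,\; P_{2M}(n)>q} f_{\mathbf a}(n)$ and $f_{\mathbf a}(n) := e_q\big(\sum_i a_i g_i(n)\big)$. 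The term $\mathbf a=\mathbf 0$ produces $q^{-M}\#\{n\le x:P_{2M}(n)>q\}$, and an elementary sieve bound gives $\#\{n\le x:P_{2M}(n)>q\}\sim x$ uniformly for $q\le(\log x)^K$, so the whole task reduces to proving $\sum_{\mathbf a\ne\mathbf 0}|S(\mathbf a)| = o(x)$ uniformly in squarefree $q\le(\log x)^K$ lying in $\Qgfam$ and in $(b_i)$. Since each $f_{\mathbf a}$ is multiplicative of modulus $1$ with $f_{\mathbf a}(p)=e_q(G_{\mathbf a}(p))$ for $G_{\mathbf a}:=\sum_i a_i G_i$, this is a statement about character-twisted sums of a polynomially-defined multiplicative function restricted to integers with at least $2M$ prime factors exceeding $q$.

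Next I would remove the restriction $P_{2M}(n)>q$ by the usual decomposition: write $m$ for the $q$-smooth part of $n$ and peel off the prime factors of $n$ exceeding $q$. Two pieces are discarded as error terms — the contribution of $n$ whose $q$-rough part is not squarefree, and the contribution of $n$ all of whose prime factors above $q$ are smaller than $\exp((\log\log x)^3)$ (the latter bounded by de~Bruijn's estimate for smooth numbers, which is far below $x/q^M$). What remains is, essentially, to bound
\[
\sum_{\substack{m\le x\\ P^+(m)\le q}} f_{\mathbf a}(m)\;\sum_{\substack{P_1>\cdots>P_j>q,\; j\ge 2M\\ mP_1\cdots P_j\le x}}\;\prod_{\ell=1}^{j} e_q\big(G_{\mathbf a}(P_\ell)\big),
\]
where (after the above removals) at least $2M$ of the $P_\ell$ lie above the Siegel--Walfisz threshold for the modulus $q$. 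The coupling $mP_1\cdots P_j\le x$ is disentangled by a dyadic decomposition of the ranges of the $P_\ell$ together with Perron's formula (or a Rankin-type weight), so the job becomes: save a factor $q^M$ in a product of $2M$ exponential sums $\sum_{P\in I_\ell}e_q(G_{\mathbf a}(P))$ over primes in long dyadic blocks $I_\ell$.

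The arithmetic heart is where squarefreeness and $\mathbf a\not\equiv\mathbf 0$ enter. Passing to the conductor, we may assume $q=q_{\mathbf a}$ is squarefree with $\gcd(q_{\mathbf a},\mathbf a)=1$, and by Proposition~\ref{prop:DelangeJoint} membership in $\Qgfam$ forces $q_{\mathbf a}>1$ and forces $\sum_i a_i g_i$ to be equidistributed modulo $q_{\mathbf a}$. Because the $G_i'$ are $\Q$-linearly independent, there is a \emph{fixed} finite set of primes $\mathcal P_0$ (the primes dividing a nonzero maximal minor of the coefficient matrix of $(G_1',\dots,G_M')$, enlarged to include all primes up to $\max_i\deg G_i$) outside of which $\mathbf a\not\equiv\mathbf 0\pmod p$ forces $G_{\mathbf a}$ to be a non-constant polynomial modulo $p$; combined with Delange's criterion this shows that, for $q$ large, $G_{\mathbf a}$ is non-constant modulo $p$ for every prime factor $p$ of $q_{\mathbf a}$ except for a bounded sub-product $q_{\mathbf a}^\flat\mid\prod_{p\in\mathcal P_0}p$. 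Writing $q_{\mathbf a}=q_{\mathbf a}^\flat\,\widetilde q$, the Weil bound gives square-root cancellation $\big|\sum_{c\in(\Z/p\Z)^\times}e_p(G_{\mathbf a}(c))\big|\ll_{\,\deg}\sqrt p$ for each $p\mid\widetilde q$, so Siegel--Walfisz applied to each of the $2M$ long dyadic blocks, together with the Chinese Remainder Theorem, yields a saving of $\widetilde q^{-1/2+o(1)}$ per block in the $\widetilde q$-aspect — hence a total $\widetilde q^{-M+o(1)}\gg q_{\mathbf a}^{-M+o(1)}$, enough to absorb both the $q^M$ characters and the bounded loss from $q_{\mathbf a}^\flat$, once the residual $o(1)$ is eaten by the final ingredient. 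That ingredient handles the $q_{\mathbf a}^\flat$-aspect, where only a \emph{bounded} amount of cancellation is needed: there the twist depends essentially only on $\Omega(n)$, and the equidistribution of $\Omega$ in residue classes (Selberg--Delange/Landau), or equivalently the bound $\sum_{n\le x}z^{\Omega(n)}\ll x(\log x)^{\Re z-1}$ for unimodular $z\ne1$, supplies a fixed positive power of $\log x$. Assembling these estimates, summing over all $\mathbf a\ne\mathbf 0$ and all conductors dividing $q$ (of which there are $q^{o(1)}$), and feeding everything back through the decomposition yields $\sum_{\mathbf a\ne\mathbf 0}|S(\mathbf a)|=o(x)$, uniformly in all parameters.

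The main obstacle, and the step that must be carried out with care, is precisely the extraction of a full factor $q^M$ of cancellation from only $2M$ large prime factors, uniformly for $q$ as large as $(\log x)^K$ — in particular making the Siegel--Walfisz input effective for all the relevant prime ranges and controlling the configurations with few genuinely "large" primes. Squarefreeness is indispensable here: for a squarefree modulus the congruence $G_{\mathbf a}(t)\equiv c\pmod q$ splits by CRT into congruences over the fields $\Z/p\Z$, each with at most $\deg G_{\mathbf a}$ solutions, so there are none of the ``nilpotent'' obstructions that occur at prime-power moduli and that force the dependence on $\max_i\deg G_i$ seen in Theorem~\ref{thm:restrictediput_generalq}; every prime factor of the effective modulus (outside a bounded exceptional set) contributes genuine square-root cancellation, so two large primes per additive function suffice, giving the threshold $2M$. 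The remaining burden is bookkeeping: keeping the error terms uniform in $(b_i)$, in $q$ and in $\mathbf a$, across the dyadic decomposition and the removal of the restriction $P_{2M}(n)>q$, and correctly isolating and treating the bounded ``bad prime'' part via $\Omega$-equidistribution rather than Weil cancellation.
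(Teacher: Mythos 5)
Your reduction to bounding $\sum_{\mathbf a\ne\mathbf 0}|S(\mathbf a)|=o(x)$ is fine, but the two estimates you lean on to achieve it do not hold as stated. First, the restriction $P_{2M}(n)>q$ only guarantees $2M$ prime factors exceeding $q$, not $2M$ prime factors in ranges where Siegel--Walfisz is usable for moduli as large as $(\log x)^K$: discarding the $n$ all of whose prime factors above $q$ are below $\exp((\log_2 x)^3)$ leaves you with at least \emph{one} prime above that threshold, while the remaining $2M-1$ forced primes may lie in the dead zone $(q,\exp((\log_2 x)^3)]$, where only Brun--Titchmarsh-type upper bounds (no cancellation) are available. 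So the promised saving of $\widetilde q^{-1/2+o(1)}$ ``per block, for $2M$ blocks'' is not justified; to extract a $q^{-M}$ from primes in that zone one has to count residue tuples solving the joint system $\sum_j G_i(v_j)\equiv w_i \pmod q$ (a complete-exponential-sum/Weil count over residues, not over primes), which your outline never sets up. Second, even granting the per-block saving, the bookkeeping does not close: characters of conductor $q'$ number about $(q')^M$, while $2M$ blocks give at best $(q')^{-M+o(1)}$, the $(q')^{o(1)}$ loss coming from factors like $D^{\omega(q')}$ and $q'/\phi(q')$; this leaves $x\,(q')^{o(1)}$, summed over $q'\mid q$, which is not $o(x)$. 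Your proposed rescue --- a power of $\log x$ from Selberg--Delange at the bad primes --- is unavailable exactly when the conductor consists only of ``good'' primes (and, incidentally, the claim that the twist at bad primes depends essentially only on $\Omega(n)$ is false unless $G_{\mathbf a}$ degenerates to a constant modulo those primes). With only $2M$ blocks of cancellation there is simply nothing left over to beat the character count.

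The paper's proof resolves both issues differently. The asymptotic main term is extracted not from $2M$ large primes but from ``convenient'' integers whose $J=\lfloor\log_3 x\rfloor$ largest prime factors exceed $y=\exp((\log x)^{1/2})$ and are distinct: there Siegel--Walfisz applies to every one of the $J$ blocks, and the character-sum analysis (Proposition \ref{prop:Vqwi_ReducnToBddModulus}, via Weil and Cochrane--Zheng) is performed with $N=J\to\infty$, so the cancellation overwhelms the $q^M$ characters with room to spare, the hypothesis $q\in\Qgfam$ entering only through a bounded modulus $Q_0$. The threshold $2M$ is then used purely for \emph{upper bounds} on the inconvenient $n$: one splits according to how many primes $>q$ exactly divide $n$, bounds the admissible residue tuples by $\#\mathcal V_{2(M-r),M-r}(q;\cdot)\ll\lambda^{\omega(q)}\phi(q)^{2(M-r)}/q^{M-r}$ (Weil, valid since $q$ is squarefree), uses Brun--Titchmarsh for the dead-zone primes, and gains the extra $(\log x)^{1/3}$ not from character cancellation but from structural constraints ($P(n)>z$, $P_J(n)\le y$, and large squarefull parts when primes above $q$ repeat). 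If you want to salvage your Fourier-first strategy, you would need to import both of these mechanisms --- many SW-accessible primes for the main term, and congruence-solution counting plus squarefull bookkeeping for the dead zone --- at which point it essentially becomes the paper's argument.
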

Here, we omit the case $M=1$ as complete uniformity in squarefree $q \le (\log x)^K$ has already been attained in Theorem \ref{thm:UnrestrictedInput}(i). In subsection \ref{subsec:OptimalitySqfree_RestrictedInput}, we will show that the restriction $P_{2M}(n)>q$ is nearly optimal in the sense that it cannot be weakened to $P_{2M-3}(n)>q$ for any $M \ge 2$, and that for $M=2$, it cannot be weakened to $P_{2M-2}(n)>q$ either.

We now illustrate the necessity of our recurring linear independence hypothesis. It turns out that if the polynomials $\{G_i'\}_{i=1}^M$ are not assumed to be $\Q$-linearly independent, then the $M$ congruences $g_i(n) \equiv b_i \pmod q$ might degenerate to (at most) $M-1$ congruences for sufficiently many inputs $n$. As such, it is not possible to restore uniformity in moduli $q \le (\log x)^K$ no matter how many prime factors of our inputs $n$ we assume to be larger than $q$. Specifically, for any large integer $R$, we can always construct integers $b_1, \dots , b_M$ which are overrepresented by the $g_1, \dots , g_M$ among the set of inputs $n \le x$ having $P_R(n)>q$. We show this precisely below; in what follows, $P^-(q)$ denotes the smallest prime divisor of $q$.
\begin{thm}\label{thm:LIHNecess}
Fix $K \ge 1, M \ge 2$ and polynomials $G_1, \dots , G_{M-1} \in \Z[T]$ such that $\{G_i'\}_{i=1}^{M-1} \subset \Z[T]$ are $\Q$-linearly independent. Consider nonzero integers $\{a_i\}_{i=1}^{M-1}$ and a polynomial $G_M \in \Z[T]$ satisfying $G_M' = \sum_{i=1}^{M-1} a_i G_i'$ and $G_M(0) \ne \sum_{i=1}^{M-1} a_i G_i(0)$. Let $g_1, \dots , g_M$ be additive functions defined by the polynomials $G_1, \dots , G_M$. There exists a computable constant $\CGsub>0$ depending only on the system $\widehat{G} \coloneqq (G_1, \dots, G_M)$ that satisfies the following properties:

For any integer $Q>1$ with $P^-(Q)>\CGsub$, $g_1, \dots , g_M$ are jointly equidistributed mod $Q$. However, for any fixed $R>\CGsub$ and any integers $\{b_i\}_{i=1}^{M-1}$, there exists an integer $b_M$ such that
$$\#\{n \le x: P_R(n)>q, ~ ~ (\forall i) ~ g_i(n) \equiv b_i \pmod q\} \gg \frac{x (\log_2 x)^{R-1}}{q^{M-1}\log x}  \quad\text{ as $x\to\infty$},$$
uniformly in moduli $q \le (\log x)^K$ having $P^-(q)>\CGsub$. 
\end{thm}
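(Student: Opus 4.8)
The plan is to take $\CGsub$ to be a sufficiently large constant depending only on $\widehat G=(G_1,\dots,G_M)$ — exceeding $2M$, the degrees and leading coefficients of $G_1,\dots,G_M$, the integers $|a_1|,\dots,|a_{M-1}|$, the quantity $|c_0|$ where $c_0\coloneqq G_M(0)-\sum_{i=1}^{M-1}a_iG_i(0)\ne 0$, and a bound attached to a fixed nonzero generalized Wronskian of $G_1',\dots,G_{M-1}'$ — and to prove the two assertions in turn. For the first, with $Q$ fixed and $P^-(Q)>\CGsub$, I would invoke Delange's criterion for joint equidistribution (Proposition \ref{prop:DelangeJoint}), which reduces matters to the equidistribution, to the divisors $d>1$ of $Q$, of the relevant integral linear combinations $h=\sum_{i=1}^M c_ig_i$; such an $h$ is additive with $h(p)=F(p)$ at primes, $F\coloneqq\sum_i c_iG_i\in\Z[T]$. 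Since $G_M'=\sum_i a_iG_i'$, either $(c_i)$ is a multiple of $(-a_1,\dots,-a_{M-1},1)$, whence $h(p)=c_Mc_0$ for all primes $p$, a nonzero integer coprime to every such $d$ (as $P^-(Q)>|c_0|$), so that $\sum_{p\colon d\nmid h(p)}1/p=\sum_p 1/p=\infty$; or $F$ is nonconstant of degree below $P^-(Q)$ and, by the $\Q$-linear independence of the $G_i'$ together with the size of $\CGsub$, reduces to a nonzero polynomial modulo each prime dividing $Q$, so that $\ell\mid F(p)$ confines $p$ to at most $\deg F<\ell$ residue classes and Mertens's theorem in arithmetic progressions gives $\sum_{p\colon d\nmid F(p)}1/p=\infty$ for all such $d$. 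Either way Delange's single-function criterion \cite{delange69} holds — this is precisely where $\CGsub$ is taken large — which yields the first assertion.

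For the second assertion, set $b_M\coloneqq Rc_0+\sum_{i=1}^{M-1}a_ib_i\in\Z$, a fixed integer independent of $q$. Integrating $G_M'=\sum_i a_iG_i'$ gives the polynomial identity $G_M(T)-\sum_{i=1}^{M-1}a_iG_i(T)=c_0$, so that if $n=p_1\cdots p_R$ is a product of $R$ distinct primes, each exceeding $q$, with $g_i(n)\equiv b_i\pmod q$ for $1\le i\le M-1$, then $g_M(n)-\sum_{i=1}^{M-1}a_ig_i(n)=\sum_{j=1}^R(G_M(p_j)-\sum_i a_iG_i(p_j))=Rc_0$, and hence $g_M(n)\equiv b_M\pmod q$ automatically. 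As every such $n$ satisfies $P_R(n)>q$, it suffices to prove that
\[
\#\left\{n\le x\colon \omega(n)=\Omega(n)=R,\ P^-(n)>q,\ (\forall i\le M-1)\ g_i(n)\equiv b_i \pmod q\right\}\gg\frac{x(\log_2 x)^{R-1}}{q^{M-1}\log x}
\]
uniformly for $q\le(\log x)^K$ with $P^-(q)>\CGsub$. The same set with the congruences removed has size $\sim\frac{x}{\log x}\frac{(\log_2 x)^{R-1}}{(R-1)!}$ by Landau's theorem — the constraint $P^-(n)>q$ discarding only $o(x(\log_2 x)^{R-1}/\log x)$, since $q\le(\log x)^K$ forces $\log_2 q=o(\log_2 x)$ — so what is being asked is that the sub-family $(g_1,\dots,g_{M-1})$, whose defining polynomials \emph{do} have $\Q$-linearly independent derivatives and for which $q$ lies in $\mathcal Q_{(g_1,\dots,g_{M-1})}$ by the first part (as $P^-(q)>\CGsub$), is not significantly under-represented at $(b_1,\dots,b_{M-1})$ among squarefree integers $\le x$ with exactly $R$ prime factors, all larger than $q$.

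I would establish this by running the argument underlying Theorems \ref{thm:restrictediput_generalq} and \ref{thm:restrictediput_squarefreeq}, now for the $M-1$ functions $g_1,\dots,g_{M-1}$ and with the set of squarefree $R$-almost-primes free of prime factors $\le q$ in place of $\{n\colon P_{MD+1}(n)>q\}$: expand the $M-1$ congruence conditions into additive characters modulo $q$, isolate the principal-frequency main term of size $\asymp x(\log_2 x)^{R-1}/(q^{M-1}\log x)$, and bound each of the $q^{M-1}-1$ nonprincipal contributions by the relevant exponential-sum estimate — the saving coming from the nondegeneracy, modulo the primes dividing $q$, of the polynomials $\sum_i t_iG_i$ (which $\{G_i'\}$ independent and $P^-(q)>\CGsub$ guarantee) and from $R$ being large. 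A more hands-on route is to construct the required $n$ directly, using $R-M+1$ ``free'' prime factors below $x^{1/(2R)}$, unrestricted modulo $q$, which supply the factor $(\log_2 x)^{R-M+1}$ and — via Siegel--Walfisz and the joint equidistribution of $(G_1,\dots,G_{M-1})$ at primes modulo $q$ — make the residues $(b_i-\sum_j G_i(p_j))_i$ spread over $(\Z/q\Z)^{M-1}$, together with $M-1$ ``adjuster'' primes (one of them the large anchor factor) placed by Siegel--Walfisz into residue classes solving $\sum_k G_i(a_k)\equiv b_i-\sum_j G_i(p_j)\pmod q$; the solvability of this system for almost every right-hand side is exactly where the nonvanishing of the Wronskian of $G_1',\dots,G_{M-1}'$ modulo the primes dividing $q$ is used, and this is ensured by $P^-(q)>\CGsub$.

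The crux — and the step I expect to be the main obstacle — is exactly this last point: showing that imposing the $M-1$ congruences costs only the expected factor $q^{-(M-1)}$ on a set of squarefree integers that is itself sparse, and doing so uniformly for $q$ up to a fixed power of $\log x$. This needs exponential-sum (equivalently, Halász-type ``mixing'') input strong enough to beat the count $q^{M-1}$ of nonprincipal frequencies, and it is here that both the $\Q$-linear independence of $G_1',\dots,G_{M-1}'$ — keeping the relevant polynomials nondegenerate — and the hypothesis $P^-(q)>\CGsub$ — preventing a small prime from spoiling that nondegeneracy — are used essentially; the remaining work is bookkeeping of the kind already carried out for the main theorems.
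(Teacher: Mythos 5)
Your route is essentially the paper's. For the second assertion you make the same choice of $b_M$ (your $Rc_0+\sum_{i<M}a_ib_i$ equals the paper's $RG_M(0)+\sum_{i<M}a_i(b_i-RG_i(0))$), reduce to products of $R$ large primes, and the step you flag as the crux is precisely the paper's estimate \eqref{eq:VRqb1}, namely $\#\mathcal V_{R,M-1}\left(q;(b_i)_{i=1}^{M-1}\right)\gg \phi(q)^R/q^{M-1}$, which is proved exactly as you predict: orthogonality plus Propositions \ref{prop:Weil} and \ref{prop:CZ}, using the $\F_\ell$-linear independence of $\{G_i'\}_{i=1}^{M-1}$ for every $\ell\mid q$ and the largeness of both $R$ and $P^-(q)$ relative to $D$ (this is a rerun of the computation behind \eqref{eq:Largeprime}), so your expectation that the rest is bookkeeping is justified. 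Your treatment of the first assertion, a direct check of Delange's criteria with the degenerate direction giving the nonzero constant $\pm c_0$, is a mild variant of the paper's argument, which instead deduces $\Q$-linear independence of $\{G_i\}_{i=1}^M$ from $c_0\ne0$ and quotes Corollary \ref{cor:JtEquidLargePD}; these are equivalent in substance.

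Two cautions. First, you count products of $R$ primes merely exceeding $q$; primes only slightly larger than $q$ are not equidistributed mod $q$, so neither Siegel--Walfisz nor any useful nonprincipal-frequency bound applies to them. Do as the paper does: take all $R$ primes larger than $y=\exp((\log x)^{1/2})$, which still gives $\gg x(\log_2x)^{R-1}/\log x$ tuples and allows the congruence removal as in \eqref{eq:CongrueceRemoved}; the Landau-theorem reduction for the full set with $P^-(n)>q$ is unnecessary for a lower bound. Second, drop the alternative ``hands-on'' route: with only $M-1$ adjuster primes the map $(a_1,\dots,a_{M-1})\mapsto\big(\sum_k G_i(a_k)\big)_i$ on $(U_q)^{M-1}$ need not cover most residue tuples (already for $G_1(T)=T^2$ its image mod $\ell$ has density about $1/2$), and since your ``free'' primes are unrestricted mod $q$, showing the target vector lands in the image is the same equidistribution problem you are trying to solve; only the character-sum route closes the argument.
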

Thus, the above theorem shows that without the $\Q$-linear independence of the $\{G_i'\}_{i=1}^M$, uniformity could fail to \textit{all} moduli $q > \log x$ having sufficiently large prime factors, despite $g_1, \dots , g_M$ being jointly equidistributed to any fixed modulus having sufficiently large prime factors. We expect that with appropriate modifications of our methods, it might be possible to obtain analogues of Theorems \ref{thm:UnrestrictedInput}, \ref{thm:restrictediput_generalq} and \ref{thm:restrictediput_squarefreeq} (with more limited ranges of unformity in $q$) when $\{G_i'\}_{i=1}^M$ are not $\Q$-linearly independent: from the arguments we shall give for our main results, it seems reasonable to expect that the corresponding ranges of $q$ and restrictions on the inputs $n$ should then depend on the rank of the matrix of coefficients of the polynomials $\{G_i'\}_{i=1}^{M}$. 

We conclude this introductory section with the remark that although for the sake of simplicity of statements, we have been assuming that our additive functions $\{g_i\}_{i=1}^M$ and polynomials $\{G_i\}_{i=1}^M$ are both fixed, our proofs of Theorems \ref{thm:UnrestrictedInput}, \ref{thm:restrictediput_generalq}, \ref{thm:restrictediput_squarefreeq} and \ref{thm:LIHNecess} will reveal that these results are also uniform in the additive functions $\{g_i\}_{i=1}^M$ as long as they are defined by the fixed polynomials $\{G_i\}_{i=1}^M$. 
\subsection*{Notation and conventions:} 
Given polynomials $G_1, \dots , G_M \in \Z[T]$, we shall always use $D$ and $\Dmin$ to denote the maximum and the minimum of the degrees of the $G_i$, respectively. As mentioned previously, we shall use $P_k(n)$ to denote the $k$-th largest prime factor of $n$ (counted with multiplicity), $P(n)$ to denote $P_1(n)$, and $P^-(n)$ to denote the least prime divisor of $n$. We denote the number of primes dividing $q$ counted with and without multiplicity by 
$\Omega(q)$ and $\omega(q)$ respectively, and we write $U_q$ to denote the group of units (or multiplicative group) modulo $q$, so that $\#U_q = \phi(q)$, the Euler totient of $q$. When there is no danger of confusion, we shall write $(a_1, \dots , a_k)$ in place of $\gcd(a_1, \dots , a_k)$. 

Throughout, the letters $p$ and $\ell$ are reserved for primes. Implied constants in $\ll$ and $O$-notation, as well as implicit constants in qualifiers like ``sufficiently large", may always depend on any parameters declared as ``fixed''; in particular, they will always depend on the polynomials $G_1, \dots , G_M$. Other dependence will be noted explicitly (for example, with parentheses or subscripts); notably, we shall use $\CG$ or $\CGsub$ to denote constants depending only on the vector $\widehat{G} \coloneqq (G_1, \dots , G_M)$ of defining polynomials. For a nonzero polynomial $H \in \Z[T]$, we use $\ord_\ell(H)$ to denote the highest power of $\ell$ dividing all the coefficients of $H$; for an integer $m \ne 0$, we shall sometimes use $v_\ell(m)$ in place of $\ord_\ell(m)$. For a positive integer $n$, we define $\OmStqn \coloneqq \largesum_{\substack{p^k \parallel n\\p>q, ~ k>1}} k$ to be the number of prime divisors of $n$ (counted with multiplicity) that exceed $q$ and do not exactly divide $n$ (that is, appear to an exponent greater than $1$ in the prime factorization of $n$). We write $\log_{k}$ for the $k$-th iterate of the natural logarithm. 
\section{Preliminary Discussion: Delange's equidistribution criteria and consequences for polynomially-defined additive functions}\label{sec:PrelimDiscuss}
The following result of Delange provides a characterization for when a single additive function is equidistributed to a given integer modulus (see Theorem 1 and Remark 3.1.1 in \cite{delange69}). 
\begin{prop}\label{prop:DelangeSingle}
Let $f$ be an integral-valued additive function and $q>1$ a given integer. Consider the sums $S_d \coloneqq \sum_{p: ~d \nmid f(p)} 1/p$. Then $f$ is equidistributed mod $q$ if and only if $S_\ell$ diverges for every odd prime $\ell$ dividing $q$ and one of the following hold:\\
(i) $q$ is odd;\\
(ii) $2 \parallel q$, and either $S_2$ diverges or $f(2^r)$ is odd for all $r \ge 1$;\\
(iii) $4\mid q$, $S_4$ diverges, and either $S_2$ diverges or $f(2^r)$ is odd for all $r \ge 1$.
\end{prop}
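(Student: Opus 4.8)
The plan is to pass to the finite Fourier side. Writing $e_d(t) := e^{2\pi i t/d}$, orthogonality of the characters of the additive group $\Z/q\Z$ shows that $f$ is equidistributed mod $q$ if and only if $\sum_{n \le x} e_d(bf(n)) = o(x)$ as $x \to \infty$ for every divisor $d \mid q$ with $d > 1$ and every $b$ with $\gcd(b,d) = 1$. Since $f$ is additive, $g_{d,b}(n) := e_d(bf(n))$ is multiplicative with $|g_{d,b}(n)| = 1$, and $g_{d,b}(p)$ is always a $d$-th root of unity; so everything reduces to deciding when such multiplicative functions have mean value zero.

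For this I would appeal to the mean value theorems of Halász, Wirsing and Delange. The crucial preliminary point is that $g_{d,b}$ cannot pretend to be $n^{it}$ for any real $t \ne 0$: since $g_{d,b}^d \equiv 1$ (as $bf(n) \in \Z$), the triangle inequality for the pretentious distance applied to $d$-th powers gives $\sum_p \frac{1 - \cos(dt\log p)}{p} \le d^2 \sum_p \frac{1 - \mathrm{Re}(g_{d,b}(p)\,p^{-it})}{p}$, and the left-hand side diverges when $dt \ne 0$. Consequently, by Halász's theorem, $\frac1x \sum_{n \le x} g_{d,b}(n) \to 0$ unless $\sum_p \frac{1 - \mathrm{Re}(g_{d,b}(p))}{p} = \sum_p \frac{1 - \cos(2\pi b f(p)/d)}{p}$ converges, in which case the mean value exists and equals the Euler product $\prod_p \bigl(1 - \tfrac1p\bigr) \sum_{k \ge 0} g_{d,b}(p^k)\,p^{-k}$. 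Since $\gcd(b,d) = 1$ gives $d \nmid bf(p) \iff d \nmid f(p)$, and the summand $1 - \cos(2\pi bf(p)/d)$ is bounded below by a positive constant depending only on $d$ whenever $d \nmid f(p)$, that sum converges precisely when $S_d$ converges. Hence $\sum_{n \le x} g_{d,b}(n) = o(x)$ if and only if $S_d$ diverges, or else $S_d$ converges and the Euler product vanishes.

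Next I would determine when the Euler product vanishes. For odd $p$ the local factor $\sum_{k \ge 0} g_{d,b}(p^k)\,p^{-k}$ has real part $\ge 1 - \tfrac1{p-1} \ge \tfrac12 > 0$, so the product vanishes exactly when the factor at $p = 2$, namely $1 + \sum_{k \ge 1} e_d(bf(2^k))\,2^{-k}$, does. The tail here has modulus at most $1$, with equality only if every term equals $-1$; so the factor vanishes precisely when $e_d(bf(2^k)) = -1$ for all $k \ge 1$, which (using that $b$ is odd when $d$ is even) is equivalent to ``$d$ is even and $f(2^k) \equiv d/2 \pmod d$ for all $k \ge 1$'', independently of $b$. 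Combining with the preceding paragraph: $f$ is equidistributed mod $q$ if and only if, for every divisor $d > 1$ of $q$, either $S_d$ diverges, or $d$ is even and $f(2^k) \equiv d/2 \pmod d$ for all $k \ge 1$.

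It remains to unwind this into the stated trichotomy, using the monotonicity $S_d \le S_{d'}$ whenever $d \mid d'$ (immediate from $d \nmid f(p) \Rightarrow d' \nmid f(p)$): divergence of $S_d$ for all divisors $d > 1$ of $q$ is equivalent to divergence of $S_\ell$ for every prime $\ell \mid q$. If $q$ is odd, the alternative requiring $d$ even never applies, giving (i). If $v_2(q) = 1$, every even divisor $d > 2$ of $q$ contains an odd prime $\ell \mid q$ with $S_\ell = \infty$, so the only possibly binding even divisor is $d = 2$, whose alternative reads ``$f(2^r)$ odd for all $r$'', giving (ii). If $v_2(q) \ge 2$, one has in addition the divisors $d = 2^j$ with $2 \le j \le v_2(q)$: when $S_2 = \infty$ these $S_{2^j}$ diverge automatically, and when $f(2^r)$ is odd for all $r$ the congruence alternative fails for each $d = 2^j$ (as $d/2 = 2^{j-1}$ is even), forcing $S_{2^j} = \infty$ for all such $j$ and hence $S_4 = \infty$ by monotonicity; so these divisors contribute precisely the single extra requirement $S_4 = \infty$, giving (iii). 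Divisors with odd part exceeding $1$ impose nothing new. The main obstacle is the analytic heart of the middle two paragraphs: invoking the mean value theorems correctly, checking that only the ``constant'' pretentiousness can occur, and handling the delicate local factor at $p = 2$; once the criterion ``$S_d = \infty$, or $d$ even with $f(2^k) \equiv d/2 \pmod d$ for all $k$'' is established, the passage to (i)--(iii) is elementary bookkeeping.
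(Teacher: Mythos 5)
Your proposal is correct, but there is nothing in the paper to compare it against: the paper does not prove this proposition, it quotes it from Delange (Theorem 1 and Remark 3.1.1 of \cite{delange69}). Your argument is essentially the classical route to Delange's criterion. You reduce equidistribution mod $q$, via orthogonality of additive characters, to showing $\sum_{n\le x}e^{2\pi i bf(n)/d}=o(x)$ for every divisor $d>1$ of $q$ and every $b$ coprime to $d$; you note that $n\mapsto e^{2\pi i bf(n)/d}$ is a unimodular multiplicative function valued in $d$-th roots of unity, so it cannot pretend to be $n^{it}$ for $t\ne 0$ (your inequality $1-\cos(d\theta)\le d^{2}(1-\cos\theta)$ is the standard way to see this); you then invoke Hal\'asz in the divergent case and the Delange--Wirsing mean value theorem in the convergent case, and locate the only possible vanishing of the Euler product in the factor at $p=2$. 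One presentational point deserves care: the conclusion in the convergent case (mean value exists and equals the Euler product) is Delange's theorem rather than Hal\'asz's, and its hypothesis is convergence of $\sum_p(1-g(p))/p$ as a complex series, not merely of the real-part sum; this is automatic in your situation because $g(p)$ is a $d$-th root of unity, so $|1-g(p)|$ is comparable (with constants depending only on $d$) to $1-\mathrm{Re}\,g(p)$ whenever it is nonzero, giving $\sum_p|1-g(p)|/p\le 2S_d$ --- exactly the observation you already used to identify the convergence condition with that of $S_d$, so there is no gap. The vanishing analysis at $p=2$ (the factor vanishes iff $d$ is even and $f(2^k)\equiv d/2\pmod d$ for all $k$, independently of the odd unit $b$) and the final bookkeeping, including the key point that $S_4<\infty$ together with $f(2^k)\equiv 2\pmod 4$ for all $k$ is incompatible with the $d=2$ condition because $S_2\le S_4$, both check out, so your reconstruction is a complete and correct proof modulo the standard mean value theorems you cite.
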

In his sequel \cite{delange74} to the aforementioned paper, Delange characterizes when a given family $f_1, \dots , f_M$ of integral-valued additive functions is jointly equidistributed to a given integer modulus $q$, by reducing the problem to the equidistribution of a single additive function. The following is the relevant special case of Delange's result (which corresponds to the assignment $q_i' \coloneqq 1$, $\delta \coloneqq q$ in the result stated in section 4 of \cite{delange74}).
\begin{prop}\label{prop:DelangeJoint}
A given family $f_1, \dots , f_M$ of integral-valued additive functions is jointly equidistributed modulo $q>1$ if and only if for all integers $k_1, \dots , k_M$ satisfying $\gcd(k_1, \dots , k_M)=1$,\footnote{Whenever we speak of $\gcd(k_1, \dots , k_M)$, we assume implicitly that $(k_1, \dots , k_M) \ne (0, \dots , 0)$.} the additive function $k_1 f_1 + \dots + k_M f_M$ is equidistributed mod $q$.
\end{prop}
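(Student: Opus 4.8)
The plan is to reduce both implications to the standard finite-Fourier (Weyl-type) criterion for equidistribution in $\Z/q\Z$ and in $(\Z/q\Z)^M$. Writing $e_q(t) := e^{2\pi i t/q}$, orthogonality of additive characters gives, for any integral-valued additive function $h$ and any residue $b$,
\[
\#\{n \le x : h(n) \equiv b \pmod q\} = \frac{\lfloor x\rfloor}{q} + \frac1q \sum_{a=1}^{q-1} e_q(-ab)\sum_{n \le x} e_q\big(a\,h(n)\big),
\]
and the analogous identity holds for the system $f_1,\dots,f_M$ with the characters of $(\Z/q\Z)^M$. I would first record the two consequences: $h$ is equidistributed mod $q$ if and only if $\frac1x\sum_{n\le x} e_q(a h(n)) \to 0$ for every $a \in \{1,\dots,q-1\}$; and $f_1,\dots,f_M$ are jointly equidistributed mod $q$ if and only if $\frac1x\sum_{n \le x} e_q(m_1 f_1(n) + \dots + m_M f_M(n)) \to 0$ for every $(m_1,\dots,m_M) \in (\Z/q\Z)^M$ that is not $\equiv (0,\dots,0)$. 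Granting these, the proposition becomes an elementary statement about which character sums of the system are ``seen'' by the linear combinations $k_1 f_1 + \dots + k_M f_M$.

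For the forward implication, I would assume joint equidistribution, fix $k_1,\dots,k_M$ with $\gcd(k_1,\dots,k_M)=1$ and $a \in \{1,\dots,q-1\}$, and check that $(ak_1,\dots,ak_M) \not\equiv (0,\dots,0)\pmod q$: if it were, then with $d = \gcd(a,q)$ the relations $q \mid a k_i$ together with $\gcd(q/d, a/d)=1$ force $q/d \mid k_i$ for all $i$, hence $q/d \mid \gcd(k_1,\dots,k_M)=1$, i.e. $q\mid a$, contradicting $1 \le a \le q-1$. The joint criterion then gives $\frac1x\sum_{n\le x} e_q\big(a \textstyle\sum_i k_i f_i(n)\big) = \frac1x\sum_{n\le x} e_q\big(\sum_i (ak_i) f_i(n)\big) \to 0$, and since this holds for every $a \in \{1,\dots,q-1\}$, the single-function criterion shows $\sum_i k_i f_i$ is equidistributed mod $q$.

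For the reverse implication, I would assume every $\sum_i k_i f_i$ with $\gcd(k_1,\dots,k_M)=1$ is equidistributed mod $q$, take an arbitrary $(m_1,\dots,m_M) \not\equiv (0,\dots,0)\pmod q$, choose integer representatives $0 \le m_i < q$ not all zero, and set $g := \gcd(m_1,\dots,m_M)$ and $k_i := m_i/g$. Then $\gcd(k_1,\dots,k_M)=1$ and $1 \le g \le q-1$ (since $g$ divides some nonzero $m_i < q$), so $g \not\equiv 0 \pmod q$. Applying the single-function criterion to the equidistributed function $h := \sum_i k_i f_i$ at $a = g$ gives $\frac1x\sum_{n\le x} e_q\big(\sum_i m_i f_i(n)\big) \to 0$; as $(m_1,\dots,m_M)$ was an arbitrary nonzero residue tuple, the joint criterion is met, which finishes the proof.

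I do not expect a genuine obstacle here: there is no analytic input beyond the elementary Fourier criterion. The only points needing care are the bookkeeping between the $\gcd$ taken over $\Z$ and reduction modulo $q$ — namely that a primitive integer tuple remains non-degenerate modulo $q$ after scaling by any $a \in \{1,\dots,q-1\}$, and conversely that every nonzero residue tuple is a $\Z$-multiple of a primitive one by a scalar $g$ that is itself nonzero modulo $q$. Once these are nailed down the equivalence drops out; alternatively one could simply cite the relevant specialization of Delange's theorem in \cite{delange74}, but the self-contained derivation above is shorter than unwinding his general formulation with the parameters $q_i'$ and $\delta$.
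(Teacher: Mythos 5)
Your proof is correct, but it follows a genuinely different route from the paper. The paper does not prove this proposition from scratch: it quotes the special case of Delange's theorem in \cite{delange74} (the assignment $q_i'=1$, $\delta=q$ there), where the coefficients are restricted to $k_1,\dots,k_M\in\{0,\dots,q-1\}$, and then adds a short bookkeeping argument showing that this restricted formulation is equivalent to the one with arbitrary primitive integer tuples (reduce each $k_i$ mod $q$, factor out the gcd $d'$ of the reduced tuple, and observe that $d'$ is necessarily coprime to $q$, hence invertible, so equidistribution of the primitive combination transfers). You instead give a self-contained proof via orthogonality on $\Z/q\Z$ and $(\Z/q\Z)^M$: the single-function and joint equidistribution statements are each equivalent to the vanishing (as $x\to\infty$, after dividing by $x$) of the corresponding nontrivial Weyl sums, and the proposition then reduces to two elementary facts you verify correctly --- that a primitive integer tuple scaled by any $a\in\{1,\dots,q-1\}$ stays nonzero mod $q$, and that any nonzero residue tuple is $g$ times a primitive integer tuple with $g\not\equiv 0\pmod q$. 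What your approach buys is independence from Delange's (more general and more intricate) theorem: you re-derive the needed special case rather than cite it, which is arguably more transparent. What the paper's approach buys is brevity at this point of the exposition, with the only original content being the coefficient-normalization step, which is essentially the same kind of gcd/scaling bookkeeping that appears in the reverse direction of your argument. Both are valid; your Fourier criterion equivalences are standard and hold exactly as stated since $q$ is fixed.
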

We remark that the formulation above is equivalent to that in \cite[Section 4]{delange74}, which is stated with the additional restriction that $k_1, \dots , k_M \in \{0, \dots , q-1\}$. Indeed, assume that $\sum_{i=1}^M \lambda_i g_i$ is equidistributed mod $q$ for all $(\lambda_1, \dots , \lambda_M) \in \{0, 1 \cdots, q-1\}^M$ satisfying $\gcd(\lambda_1, \dots , \lambda_M)=1$. We claim that $\sum_{i=1}^M k_i g_i$ is equidistributed mod $q$ for all $(k_1, \dots , k_M) \in \Z^M$ satisfying $\gcd(k_1, \dots , k_M)=1$. To see this, we consider any tuple $(k_1, \dots , k_M) \in \Z^M$ having $\gcd(k_1, \dots , k_M)=1$, and let $k_1', \dots , k_M' \in \{0, 1, \dots , q-1\}$ be the unique integers satisfying $k_i' \equiv k_i \pmod q$. Then $d' \coloneqq \gcd(k_1', \dots , k_M') \in \{1, \dots , q-1\}$ must be coprime to $q$, for otherwise, there is a prime $\ell$ dividing $\gcd(q, k_1', \dots , k_M')$ hence also dividing $\gcd(q, k_1, \dots , k_M)$ $=1$. Write $k_i' \eqqcolon d' k_i''$ for some $k_1'', \dots , k_M'' \in \{0, 1, \dots , q-1\}$ having $\gcd(k_1'', \dots , k_M'')=1$. Since $d'$ is invertible mod $q$ and the function $\sum_{i=1}^M k_i'' g_i$ is equidistributed mod $q$, it follows so is the function $\sum_{i=1}^M k_i g_i$, as $\sum_{i=1}^M k_i g_i \equiv \sum_{i=1}^M k_i' g_i \equiv d'\sum_{i=1}^M k_i'' g_i \pmod q$.

Propositions \ref{prop:DelangeSingle} and \ref{prop:DelangeJoint} lead to the following consequences in our setting of polynomially-defined additive functions, which is how they shall be useful to us. In what follows, for a given polynomial $G \in \Z[T]$, we let 
$$\alpha_G(q) \coloneqq \frac1{\phi(q)}\#(G^{-1}(U_q) \cap U_q) = \frac1{\phi(q)} \#\{v \in U_q: G(v) \in U_q\}$$
denote the proportion of unit residues $v$ mod $q$ whose image under the polynomial $G$ is also a unit mod $q$. By the Chinese Remainder Theorem, we see that $\alpha_G(q) = \prod_{\ell \mid q} \alpha_G(\ell)$.
\begin{lem}\label{lem:DelangePolyDefSingle}
Let $g \colon \NatNos \rightarrow \Z$ be an additive function defined by a nonconstant polynomial $G \in \Z[T]$. We can describe the set $\Qg = \{q \in \NatNos: g \text{ is equidistributed mod }q\}$ as follows:
\begin{enumerate}
    \item[(i)] If $2 \mid g(2^r)\text{ for some }r \ge 1$, then $\Qg = \{q: \alpha_G(q) \ne 0\}$.
    \item[(ii)] If $2 \nmid g(2^r)\text{ for all }r \ge 1\text{ and if }4 \mid (G(1), G(3))$, then $$\Qg = \{q: 2 \nmid q, ~ \alpha_G(q) \ne 0\} \cup \{q: 2 \parallel q, ~ \alpha_G(q/2) \ne 0\}.$$ 
    \item[(iii)] If $2 \nmid g(2^r)\text{ for all }r \ge 1\text{ and if }4 \nmid (G(1), G(3))$, then $\Qg = \{q: \alpha_G(q/2^{v_2(q)}) \ne 0\}$.
\end{enumerate}
\end{lem}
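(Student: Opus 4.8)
The plan is to apply Delange's single-function criterion (Proposition~\ref{prop:DelangeSingle}) to $f = g$, for which $f(p) = G(p)$, and to translate each of its clauses into a statement about $\alpha_G$. The one substantive ingredient is the following description of the divergence of the sums $S_d = \sum_{p\,:\,d\nmid G(p)} 1/p$ appearing there:
\[ S_d \text{ diverges} \iff \text{there exists } a \in U_d \text{ with } d \nmid G(a). \]
To prove this, I would discard the finitely many primes $p \mid d$ (which contribute $O(1)$ to $S_d$) and observe that for $p \nmid d$ the condition $d \nmid G(p)$ depends only on the reduced residue class $a \in U_d$ of $p$, since $G(p) \equiv G(a) \pmod d$. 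Hence $S_d = O(1) + \sum_{a \in U_d,\ d\nmid G(a)} \sum_{p\equiv a\,(d)} 1/p$, and by the quantitative form of Dirichlet's theorem each inner sum over a fixed reduced residue class modulo $d$ diverges; the displayed equivalence follows at once.

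Next I would record the consequences of this equivalence that are needed to feed into Proposition~\ref{prop:DelangeSingle}. For a prime $\ell$, taking $d = \ell$ and using that the nonzero residues modulo $\ell$ are exactly $U_\ell$ gives that $S_\ell$ diverges iff $\#\{v \in U_\ell : G(v) \in U_\ell\} > 0$, i.e.\ iff $\alpha_G(\ell) \ne 0$; in particular $S_2$ diverges iff $G(1)$ is odd iff $\alpha_G(2) \ne 0$. Taking $d = 4$ (so $U_4 = \{1,3\}$) gives that $S_4$ diverges iff $4 \nmid \gcd(G(1),G(3))$. Finally, since $\{p : 2\nmid G(p)\} \subseteq \{p : 4 \nmid G(p)\}$ we have $S_2 \le S_4$, so divergence of $S_2$ forces that of $S_4$. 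Using the multiplicativity $\alpha_G(q) = \prod_{\ell\mid q}\alpha_G(\ell)$ recorded before the lemma, the requirement ``$S_\ell$ diverges for every odd prime $\ell \mid q$'' of Proposition~\ref{prop:DelangeSingle} becomes ``$\alpha_G(\ell)\ne 0$ for every odd $\ell\mid q$'', that is, $\alpha_G(q/2^{v_2(q)}) \ne 0$.

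It then remains to substitute these translations into the three clauses of Proposition~\ref{prop:DelangeSingle} (according as $q$ is odd, $2\parallel q$, or $4 \mid q$) within each of the three regimes of the lemma, and to collect the resulting conditions. In regime~(i) some $g(2^r)$ is even, so the parity alternative in clauses (ii) and (iii) of Proposition~\ref{prop:DelangeSingle} is never available; thus whenever $2 \mid q$ one needs $S_2$ to diverge (which then makes the divergence of $S_4$ automatic), and in all three subcases the surviving conditions assemble to exactly $\alpha_G(q) \ne 0$. In regimes (ii) and (iii) every $g(2^r)$ is odd, so that parity alternative always holds and no constraint on $\alpha_G(2)$ survives; the two regimes differ only in whether $S_4$ converges. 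When $4 \mid \gcd(G(1),G(3))$ (regime (ii)), $S_4$ converges, which removes every multiple of $4$ from $\Qg$ and leaves precisely the stated union over $q$ odd and $2 \parallel q$; when $4 \nmid \gcd(G(1),G(3))$ (regime (iii)), $S_4$ always diverges, so the only remaining requirement is $\alpha_G(q/2^{v_2(q)}) \ne 0$. I anticipate no genuine difficulty beyond this casework: the only delicacy is the careful bookkeeping at the prime $2$ — tracking the interplay of divergence of $S_2$, divergence of $S_4$, and parity of the $g(2^r)$ — together with checking that the three regimes of the lemma are exactly the mutually exclusive and exhaustive possibilities left by Proposition~\ref{prop:DelangeSingle} after these substitutions.
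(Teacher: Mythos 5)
Your argument is correct and follows essentially the same route as the paper: reduce the divergence of $S_d$ to the existence of $a \in U_d$ with $d \nmid G(a)$ (the paper's $\beta_G(d) \ne 0$), specialize to $d=\ell$ and $d=4$, note $S_2 \le S_4$, and then run the casework through Proposition~\ref{prop:DelangeSingle} in the three regimes. The only cosmetic difference is that the paper invokes the Siegel--Walfisz theorem to get a quantitative form of $S_d(X)$, whereas you use only the divergence of $\sum_{p \equiv a \pmod d} 1/p$, which suffices here.
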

\begin{proof}
In what follows, let $q' \coloneqq q/2^{v_2(q)}$ denote the largest odd divisor of $q$. An application of the Siegel--Walfisz Theorem with partial summation shows that for any divisor $d>1$ of $q$ and any $X>e^q$, we have
$$S_d(X) \coloneqq \largesum_{\substack{p \le X\\d \nmid g(p)}} \frac1p = \mathlarger{\sum}_{\substack{p \le X\\d \nmid G(p)}} \frac1p = \mathlarger{\sum}_{\substack{r \in U_d\\d \nmid G(r)}} ~ \mathlarger{\sum}_{\substack{p \le X\\p \equiv r \bmod d}} \frac1p + O_q(1) = \beta_G(d) \log_2 X + O_q(1),$$
where $\beta_G(d) \coloneqq \frac1{\phi(d)} \#\{r \in U_d: d \nmid G(r)\}.$ Letting $X \rightarrow \infty$, we deduce that the sum $S_d = \sum_{p: ~ d \nmid g(p)} 1/p$ diverges if and only if $\beta_G(d) \ne 0$. But since $\beta_G(\ell) = \alpha_G(\ell)$ for any prime $\ell$, Proposition \ref{prop:DelangeSingle} shows that if $q \in \mathcal Q_g$, then $\alpha_G(\ell) \ne 0$ for all odd primes $\ell$ dividing $q$, so that $\alpha_G(q') \ne 0$. On the other hand, if $\alpha_G(q) \ne 0$ for some $q$, then $\beta_G(\ell) = \alpha_G(\ell) \ne 0$ for all primes dividing $q$, so that $S_\ell$ diverges for all such primes, and Proposition \ref{prop:DelangeSingle} leads to $q \in \Qg$ (since $S_4 \ge S_2$). In summary, we have so far shown that $\{q: \alpha_G(q) \ne 0\} \subset \Qg \subset \{q: \alpha_G(q') \ne 0\}$, which in particular means that $\{q: 2 \nmid q, ~ q \in \Qg\} = \{q: 2 \nmid q, ~ \alpha_G(q) \ne 0\}$. 

Now consider an even integer $q \in \Qg$, so that it satisfies the necessary condition $\alpha_G(q') \ne 0$.
\begin{enumerate}
\item[(i)] If $2 \mid g(2^r)\text{ for some }r \ge 1$, then by Proposition \ref{prop:DelangeSingle}, the sum $S_2$ must diverge. By the above discussion, this means that $\alpha_G(2) = \beta_G(2)$ must be nonzero, leading to $\alpha_G(q) \ne 0$. Hence, in this case $\Qg = \{q: \alpha_G(q) \ne 0\}$. 
\item[(ii)] Suppose $2 \nmid g(2^r)\text{ for all }r \ge 1\text{ and }4 \mid (G(1), G(3))$. Then $\alpha_G(2) = 0$, so that by Proposition \ref{prop:DelangeSingle}(ii) and the discussion in the previous paragraph, we have $\{q: 2 \parallel q, ~ q \in \Qg\} = \{q: 2 \parallel q, ~ \alpha_G(q/2) \ne 0\}$. Moreover, no positive integer divisible by $4$ can lie in $\Qg$: this follows by Proposition \ref{prop:DelangeSingle}(iii), since the condition $4 \mid (G(1), G(3))$ implies that $\beta_G(4) = 0$, and that $S_4$ converges. Hence, in this case $\Qg$ is as in the statement of the lemma.
\item[(iii)] Finally if $2 \nmid g(2^r)\text{ for all }r \ge 1\text{ and if }4 \nmid (G(1), G(3))$, then $S_4$ diverges, and Proposition \ref{prop:DelangeSingle} along with the inclusions obtained in the previous paragraph show that $q$ lies in $\Qg$ if and only if $\alpha_G(q') \ne 0$. 
\end{enumerate}
This completes the proof of the lemma.
\end{proof}
The following observation paves the way for a simple application of Proposition \ref{prop:DelangeJoint} in the setting of polynomially-defined additive functions. 
\begin{lem}\label{lem:alphaNonzeroLI}
Let $M \ge 2$ and $g_1, \dots , g_M: \NatNos \rightarrow \Z$ be additive functions defined by the nonconstant polynomials $G_1, \dots , G_M \in \Z[T]$, and let $\ell$ be a prime. If $\alpha_{k_1 G_1 + \dots + k_M G_M}(\ell) \ne 0$ for all integer tuples $(k_1, \dots , k_M)$ satisfying $\gcd(k_1, \dots , k_M)=1$, then the polynomials $G_1, \dots , G_M$ must be $\F_\ell$-linearly independent. Further, if $\ell>D+1$, then this condition is also sufficient.
\end{lem}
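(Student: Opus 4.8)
I want to prove Lemma~\ref{lem:alphaNonzeroLI}, which has two directions: (1) the vanishing condition on all $\alpha_{k_1 G_1 + \dots + k_M G_M}(\ell)$ forces $\F_\ell$-linear independence of $G_1, \dots, G_M$, and (2) when $\ell > D+1$, $\F_\ell$-linear independence is also sufficient for the nonvanishing condition to hold. Let me think about both.

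For direction (1), I'd argue by contraposition. Suppose $G_1, \dots, G_M$ are $\F_\ell$-linearly dependent. Then there exist integers $c_1, \dots, c_M$, not all divisible by $\ell$, with $c_1 G_1 + \dots + c_M G_M \equiv 0 \pmod{\ell}$ as polynomials in $\F_\ell[T]$. Dividing through by $\gcd(c_1,\dots,c_M)$ (which is coprime to $\ell$ since not all $c_i$ are divisible by $\ell$), I get a tuple $(k_1,\dots,k_M)$ with $\gcd(k_1,\dots,k_M)=1$ and $k_1 G_1 + \dots + k_M G_M \equiv 0 \pmod\ell$. Writing $H \coloneqq k_1 G_1 + \dots + k_M G_M$, this means $\ell \mid H(v)$ for every integer $v$, so $H(v)$ is never a unit mod $\ell$; hence $\alpha_H(\ell) = 0$, contradicting the hypothesis. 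So direction (1) is straightforward.

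For direction (2), assume $\ell > D+1$ and $G_1, \dots, G_M$ are $\F_\ell$-linearly independent. Fix a tuple $(k_1,\dots,k_M)$ with $\gcd(k_1,\dots,k_M)=1$; I must show $\alpha_H(\ell) \ne 0$ where $H \coloneqq k_1 G_1 + \dots + k_M G_M$. Since $\gcd(k_1,\dots,k_M)=1$, not all $k_i$ are divisible by $\ell$, so by the $\F_\ell$-linear independence, $H$ does not reduce to the zero polynomial mod $\ell$, i.e.\ $\bar H \in \F_\ell[T]$ is a nonzero polynomial of degree at most $D$. Now $\alpha_H(\ell) = \frac{1}{\ell-1}\#\{v \in \F_\ell^\times : \bar H(v) \in \F_\ell^\times\} = \frac{1}{\ell-1}\#\{v \in \F_\ell^\times : v \ne 0,\ \bar H(v) \ne 0\}$. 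This is nonzero precisely when there exists $v \in \F_\ell^\times$ with $\bar H(v) \ne 0$. The polynomial $T \cdot \bar H(T)$ has degree at most $D+1 < \ell$, so it has at most $D+1$ roots in $\F_\ell$; since $|\F_\ell| = \ell > D+1$, there is some $v \in \F_\ell$ with $v \cdot \bar H(v) \ne 0$, i.e.\ $v \in \F_\ell^\times$ and $\bar H(v) \in \F_\ell^\times$. Hence $\alpha_H(\ell) > 0$, as required.

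The main (minor) subtlety — really the only place one must be slightly careful — is tracking that the "$\gcd = 1$" normalization in Proposition~\ref{prop:DelangeJoint} matches up with the "not all coefficients divisible by $\ell$" condition in the definition of $\F_\ell$-linear dependence, which is just the observation that one can divide out a gcd coprime to $\ell$ without changing reducibility mod $\ell$. The degree bound $\ell > D+1$ enters exactly once, to guarantee $T\bar H(T)$ cannot vanish on all of $\F_\ell$; this is also visibly sharp in spirit, since for small $\ell$ a nonzero polynomial can vanish on every unit (e.g.\ $T^{\ell-1}-1$), which is why the sufficiency claim is restricted to $\ell > D+1$. Everything else is a short direct computation with $\alpha_G(\ell)$ using the definition given just before the lemma.
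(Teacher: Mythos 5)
Your proof is correct and follows essentially the same route as the paper: both directions hinge on the facts that an $\F_\ell$-linear dependence lifts to a coprime integer tuple annihilating $\alpha$, and that a nonzero polynomial of degree at most $D$ cannot vanish at all $\ell-1$ units when $\ell>D+1$. The only (harmless) differences are that you normalize by dividing out the gcd where the paper adjusts one coefficient via the Chinese Remainder Theorem, and you argue sufficiency directly via the root count for $T\bar H(T)$ rather than by contradiction.
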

\begin{proof}
To establish the first assertion, we assume by way of contradiction that there exist $\mu_1, \dots , \mu_M \in \{0, 1, \dots , \ell-1\}$ not all zero, such that $\sum_{r=1}^M \mu_r G_r(T)$ vanishes identically in $\F_\ell[T]$. We will construct integers $k_1, \dots , k_M$ satisfying $\gcd(k_1, \dots , k_M) = 1$ and $\alpha_{k_1 G_1 + \dots + k_M G_M}(\ell)=0$. To that end, consider some $i \in [M]$ for which $\mu_i \not\equiv 0 \pmod\ell$ and let $k_r \coloneqq \mu_r$ for all $r \in [M]\sm\{i\}$. 

Now choose any $j \in [M]\sm\{i\}$. By the Chinese Remainder Theorem, there exists an integer $k_i$ such that $k_i \equiv \mu_i \pmod \ell$ and $\gcd(k_i, k_j)=1$. With this choice of integers $(k_1, \dots , k_M)$, we see that $\gcd(k_1,\cdots, k_M)=1$ and that the polynomial $\sum_{r=1}^M k_r G_r(T) \equiv \sum_{r=1}^M \mu_r G_r(T) \pmod \ell$ is identically zero in $\F_\ell[T]$, so that $\alpha_{k_1 G_1 + \dots + k_M G_M}(\ell)=0$. This proves the first assertion of the lemma. 

To show the second assertion, we consider any prime $\ell>D+1$. Suppose there did exist a tuple of integers $(k_1, \dots , k_M)$ satisfying $\gcd(k_1, \dots , k_M)=1$ and $\alpha_{k_1 G_1 + \dots + k_M G_M}(\ell)=0$. Then on the one hand, $(k_1, \dots , k_M) \not\equiv (0, \dots , 0) \pmod \ell$. On the other hand, the polynomial $\sum_{r=1}^M k_r G_r(T)$ (considered as an element of $\F_\ell[T]$) has degree at most $D$ but has at least $\#U_\ell = \phi(\ell) = \ell-1 > D$ roots in $\F_\ell$. As such, $\sum_{r=1}^M k_r G_r(T)$ vanishes identically in $\F_\ell[T]$ yielding a nontrivial $\F_\ell$-linear dependence relation between the $\{G_r\}_{r=1}^M$.    
\end{proof}
We remark that the matrix of coefficients alluded to towards the end of the introduction will play a pivotal role in our arguments. To set things up, we write $G_i'(T) \eqqcolon \sum_{r=0}^{D-1} a_{i, r} T^r$ for some integers $\{a_{i, r}: 1 \le i \le M, 0 \le r \le D-1\}$, so that $a_{i, D-1} \ne 0$ for some $i$ (since $D = \max_{1 \le i \le M} \deg G_i$). Note that since $G_i \in \Z[T]$, we have $(r+1) \mid a_{i, r}$ for all $i \in [M]$ and $0 \le r \le D-1$. By the \textsf{matrix of coefficients} or \textsf{coefficient matrix of the polynomials $\GiPrSet$}, we shall mean the $D \times M$ integer matrix
\begin{equation}\label{eq:A0Def}
A_0 \coloneqq ~
\begin{pmatrix}
a_{1, 0} & \cdots & a_{M, 0}\\
\cdots & \cdots & \cdots\\
\cdots & \cdots & \cdots\\
a_{1, D-1} & \cdots & a_{M, D-1}
\end{pmatrix}
\end{equation}
whose the $i$-th column lists the coefficients of the polynomial $G_i'$ in ascending order of the degree of $T$. It is important to note that if the polynomials $\{G_i'\}_{i=1}^M$ are $\Q$-linearly independent, then the columns of the matrix $A_0$ form $\Q$-linearly independent vectors, so that $A_0$ has full rank. As such, the Smith normal form $S_0$ of $A_0$ only has nonzero entries on its main diagonal. In other words, $A_0$ has exactly $M$ invariant factors $\beta_1, \dots , \beta_M \in \Z \sm \{0\}$, which must also satisfy $\beta_i \mid \beta_{i+1}$ for all $1 \le i < M$. Furthermore, since $S_0$ is obtained from $A_0$ by a base change over $\Z$, it follows that the primes $\ell$ for which the columns of $A_0$  (or equivalently, the polynomials $\{G_i'\}_{i=1}^M$) are $\F_\ell$-linearly dependent are precisely those which divide at least one of the $\beta_i$ (or equivalently, those which divide $\beta_M$). As a consequence, letting $\COG$ be any constant exceeding $\max\{D+1, |\beta_M|\}$ (so that $\COG$ depends only on the vector $\widehat{G} \coloneqq (G_1, \dots , G_M)$), we see that: 
\begin{equation}\label{eq:COGDefn}
\text{The polynomials }\{G_i'\}_{i=1}^M\text{ are }\F_\ell\text{-linearly independent for all primes }\ell>\COG.
\end{equation}
Our arguments leading to \eqref{eq:COGDefn} show that under the weaker hypothesis that the polynomials $\{G_i\}_{i=1}^M$ are $\Q$-linearly independent, then there exists a constant $C_1(\widehat{G})>D+1$ such that $\{G_i\}_{i=1}^M$ are $\F_\ell$-linearly independent for all $\ell>C_1(\widehat{G})$. Note that if $\{G_i'\}_{i=1}^M$ are $\Q$ (respectively, $\F_\ell$)-linearly independent, then so are $\{G_i\}_{i=1}^M$. Hence, if $\{G_i'\}_{i=1}^M$ are $\Q$-linearly independent, then with $\COG$ as in \eqref{eq:COGDefn}, the $\{G_i\}_{i=1}^M$ are also $\F_\ell$-linearly independent for any prime $\ell>\COG$. Combining these observations with Proposition \ref{prop:DelangeJoint} and Lemmas \ref{lem:DelangePolyDefSingle} and \ref{lem:alphaNonzeroLI}, we obtain the following useful consequence.
\begin{cor}\label{cor:JtEquidLargePD}
Let $g_1, \dots , g_M: \NatNos \rightarrow \Z$ be additive functions defined by the nonconstant polynomials $G_1, \dots , G_M \in \Z[T]$. Then for any $q>1$ with $P^-(q)>D+1$, the functions $g_1, \dots , g_M$ are jointly equidistributed mod $q$ if and only if the polynomials $\{G_i\}_{i=1}^M$ are $\F_\ell$-linearly independent for every prime $\ell \mid q$. In particular,
\begin{enumerate}
\item[(i)] If the polynomials $\{G_i\}_{i=1}^M$ are $\Q$-linearly independent (so that $C_1(\widehat{G})$ exists), then any $q$ having $P^-(q)>C_1(\widehat{G})$ lies in $\Qgfam$.
\item[(ii)] If the polynomials 
$\{G_i'\}_{i=1}^M$ are $\Q$-linearly independent (so that $C_0(\widehat{G})$ exists), then any $q$ having $P^-(q)>\COG$ lies in $\Qgfam$.
\end{enumerate}
\end{cor}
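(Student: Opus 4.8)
The plan is to derive the stated equivalence by stitching together Proposition~\ref{prop:DelangeJoint}, Lemmas~\ref{lem:DelangePolyDefSingle} and~\ref{lem:alphaNonzeroLI}, and the multiplicativity relation $\alpha_H(q)=\prod_{\ell\mid q}\alpha_H(\ell)$. I would first record that, since each $G_i$ is nonconstant, $D\ge 1$, so every $q$ with $P^-(q)>D+1$ is odd; for such $q$ all three cases of Lemma~\ref{lem:DelangePolyDefSingle} collapse to the single statement that an additive function $h$ defined by a \emph{nonconstant} polynomial $H$ is equidistributed mod $q$ if and only if $\alpha_H(q)\ne 0$.

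Now fix $q>1$ with $P^-(q)>D+1$. By Proposition~\ref{prop:DelangeJoint}, $g_1,\dots,g_M$ are jointly equidistributed mod $q$ exactly when, for every tuple $(k_1,\dots,k_M)\in\Z^M$ with $\gcd(k_1,\dots,k_M)=1$, the additive function $h\coloneqq\sum_i k_ig_i$, which is defined by $H\coloneqq\sum_i k_iG_i$, is equidistributed mod $q$. By the observation above this is the condition $\alpha_H(q)\ne 0$ (the possibility that $H$ is constant being handled separately, see below), equivalently, by multiplicativity, $\alpha_{\sum_i k_iG_i}(\ell)\ne 0$ for every prime $\ell\mid q$. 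Imposing this over all primitive tuples $(k_i)$ and invoking Lemma~\ref{lem:alphaNonzeroLI} --- whose sufficiency half applies because every $\ell\mid q$ exceeds $D+1$ --- turns the condition into: $\{G_i\}_{i=1}^M$ is $\F_\ell$-linearly independent for every prime $\ell\mid q$. This is the desired main assertion. The ``in particular'' statements then follow at once: for (i), $\Q$-linear independence of $\{G_i\}$ supplies a constant $C_1(\widehat G)>D+1$ with $\{G_i\}$ being $\F_\ell$-linearly independent for all $\ell>C_1(\widehat G)$, so $P^-(q)>C_1(\widehat G)$ forces both $P^-(q)>D+1$ and the required $\F_\ell$-independence for every $\ell\mid q$; for (ii) one argues identically, using \eqref{eq:COGDefn}, the fact that $\F_\ell$-linear independence of the $\{G_i'\}$ implies that of the $\{G_i\}$, and that $\COG>D+1$.

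Two small cases need separate attention so that the statement holds verbatim. When $M=1$, Lemma~\ref{lem:alphaNonzeroLI} is not available, but ``$\{G_1\}$ is $\F_\ell$-linearly independent'' simply means $G_1\not\equiv 0$ in $\F_\ell[T]$; since $\deg G_1\le D<\ell-1$, a root count (exactly as in the proof of Lemma~\ref{lem:alphaNonzeroLI}) shows this is equivalent to $\alpha_{G_1}(\ell)\ne 0$, and, Proposition~\ref{prop:DelangeJoint} being vacuous, the reduction through Lemma~\ref{lem:DelangePolyDefSingle} yields the claim directly. When $M\ge 2$, a primitive tuple $(k_i)$ may produce a \emph{constant} polynomial $H=\sum_i k_iG_i$ (for instance $G_1=T$, $G_2=T+1$, $(k_1,k_2)=(1,-1)$), to which Lemma~\ref{lem:DelangePolyDefSingle} does not literally apply; but one checks straight from Proposition~\ref{prop:DelangeSingle} that, for odd $q$, an additive function $h$ defined by a constant polynomial $H$ is equidistributed mod $q$ if and only if $\ell\nmid H$ for every prime $\ell\mid q$, which is exactly the condition $\alpha_H(q)\ne 0$. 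Hence the chain of equivalences above is unaffected.

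I expect the main --- indeed essentially the only --- obstacle to be bookkeeping: lining up the already-proved ingredients in the correct order, keeping careful track of which of Delange's criteria is in force (and why the $p=2$ complications are irrelevant here), and disposing of the two degenerate cases above. No new analytic estimate or algebraic construction should be needed.
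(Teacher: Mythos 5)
Your proof is correct and takes essentially the same route as the paper, which obtains the corollary precisely by combining Proposition \ref{prop:DelangeJoint} with Lemmas \ref{lem:DelangePolyDefSingle} and \ref{lem:alphaNonzeroLI} (noting that $P^-(q)>D+1$ forces $q$ odd and allows the sufficiency half of Lemma \ref{lem:alphaNonzeroLI}), together with the constants $C_1(\widehat{G})$ and $\COG$ from the preceding discussion. Your explicit handling of the degenerate cases --- a constant linear combination $\sum_i k_iG_i$ and the case $M=1$ --- is left implicit in the paper but is consistent with, and completes, the same argument.
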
 
\section{Preparation for Theorems \ref{thm:UnrestrictedInput}, \ref{thm:restrictediput_generalq} and \ref{thm:restrictediput_squarefreeq}: Obtaining the main term}\label{sec:ConvnMainTerm}
We start by defining 
$$J \coloneqq J(x) \coloneqq \lfloor \log_3 x \rfloor.$$
Let $\delta \in (0, 1]$ be as in the statement of Theorem 1.1; the development in this section will also go through in Theorems \ref{thm:restrictediput_generalq} and \ref{thm:restrictediput_squarefreeq} with (say) $\delta \coloneqq 1$. We define 
$$y \coloneqq \exp\left((\log x)^{\delta/2}\right),$$
and call a positive integer $n \le x$ \textsf{convenenient} if the $J$ largest prime divisors of $n$ exceed $y$ and exactly divide $n$, that is, if
$$\max\{P_{J+1}(n), y\} < P_J(n) < \cdots < P_1(n).$$
Any convenient $n$ can thus be uniquely written in the form $mP_J \cdots P_1$, with
\begin{equation}\label{eq:convnform}
L_m \coloneqq \max\{y, P(m)\} < P_J < \cdots < P_1.
\end{equation}
We will show that the convenient $n$ give the most dominant contribution to the counts considered in Theorems \ref{thm:UnrestrictedInput}, \ref{thm:restrictediput_generalq} and \ref{thm:restrictediput_squarefreeq}. 
\begin{prop}\label{prop:convenientprop} 
Fix $K, M \ge 1$ and let $g_1, \dots , g_M$ be additive functions defined by the nonconstant polynomials $G_1, \dots , G_M \in \Z[T]$, such that $\{G_i'\}_{1 \le i \le M} \subset \Q[T]$ are $\Q$-linearly independent. Let  $D = \max_{1 \le i \le M} \deg G_i$. We have
$$\#\{n \le x:n\text{ convenient},~(\forall i)~ g_i(n) \equiv b_i \pmod q\} \sim \frac x{q^M}, \quad\text{ as $x\to\infty$},$$
uniformly in moduli $q \le (\log x)^K$ lying in $\Qgfam$, and in residues $b_1, \dots , b_M$ mod $q$. 
\end{prop}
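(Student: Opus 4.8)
The plan is to decompose a convenient $n = m P_J \cdots P_1$ according to the "small part" $m$ (with $P^+(m) \le y$) and then handle the $J$ large prime variables $P_J < \cdots < P_1$ by a combinatorial/ergodic argument in the group $U_q^M$. First I would fix $m$ and a residue for $g_1(m), \dots, g_M(m)$; since $n$ is convenient, additivity gives $g_i(n) = g_i(m) + \sum_{k=1}^J G_i(P_k)$, so the congruence conditions become $\sum_{k=1}^J G_i(P_k) \equiv b_i - g_i(m) \pmod q$ for all $i$. Thus for each fixed $m$ I need to count tuples of distinct primes $L_m < P_J < \cdots < P_1$ with $m P_1 \cdots P_J \le x$ lying in a prescribed set of residue-tuples mod $q$, and then sum over $m$. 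The expected main term $x/q^M$ should emerge from showing that the $J$-fold "convolution" of the map $p \mapsto (G_1(p), \dots, G_M(p)) \bmod q$ equidistributes the primes over all of $\Z_q^M$ (or the relevant coset), with the count of admissible prime tuples being $\sim$ (number of unrestricted such tuples)$/q^M$.

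The key mechanism is the following: for $q \in \Qgfam$ and $P^-(q) > \COG$ large enough that we are in the regime of Corollary~\ref{cor:JtEquidLargePD}, the polynomials $\{G_i\}$ are $\F_\ell$-linearly independent for $\ell \mid q$, which forces (via Proposition~\ref{prop:DelangeJoint} and Lemma~\ref{lem:alphaNonzeroLI}) that for every nonzero character $\psi = (\psi_1, \dots, \psi_M)$ of $\Z_q^M$, the sum $\sum_{r \bmod q} \psi_1(G_1(r)) \cdots \psi_M(G_M(r)) \ne q$ in absolute value, i.e. the single-prime "Fourier coefficient" $\widehat{\nu}(\psi) := \phi(q)^{-1}\sum_{p \le X, \text{weight}} \cdots$ is bounded away from $1$ by a quantity depending only on $\widehat G$ (through $\COG$ and the number of residue classes avoided). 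Raising to the $J$-th power, the contribution of every nontrivial character decays like (some fixed ratio)$^J = (\text{const})^{\log_3 x} \to 0$, which, after summing the $q^M - 1 \le (\log x)^{MK}$ nontrivial characters, still goes to $0$ because $J = \lfloor \log_3 x \rfloor$ grows. This is exactly the "mixing in the multiplicative group" idea from \cite{PSR23, akande23}, now applied coordinatewise in $U_q^M$. The small moduli $q$ with $P^-(q) \le \COG$ are only finitely many relevant primes' worth of complication and can be absorbed into the hypothesis $q \in \Qgfam$ together with Delange's criteria (Lemma~\ref{lem:DelangePolyDefSingle}), since the linear-independence hypothesis guarantees the needed nonvanishing of the relevant $\alpha$'s or $\beta$'s.

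Concretely, the steps I would carry out are: (1) write the count as $\summfm \sum^* \mathbbm{1}[\text{congruences}]$ where $\sum^*$ is over increasing prime tuples; (2) detect the congruences via characters of $\Z_q^M$, isolating the principal-character term which gives $\summfm (1/q^M) \cdot \#\{\text{unrestricted prime tuples}\}$ — and separately show (using the prime number theorem / Mertens-type estimates for primes in $(L_m, \cdot]$, exactly as in the cited works) that $\summfm \#\{L_m < P_J < \cdots < P_1, \ mP_1\cdots P_J \le x\} \sim x$, i.e. almost all $n \le x$ are convenient; (3) bound the nonprincipal-character contribution by $\sum_{\psi \ne \psi_0} \prod (\text{prime sums})$, using the Siegel--Walfisz theorem to evaluate $\sum_{p \equiv r (q)} 1/p$ and hence get a bound of the shape $x \cdot (\log_2 x)^{?} \cdot q^M \cdot \theta^J$ with $\theta = \theta(\widehat G) < 1$ fixed, then note $q^M \theta^J \le (\log x)^{MK}\theta^{\log_3 x} \to 0$; (4) reinsert the factor accounting for the ordering constraint and the $m$-summation, checking uniformity in $q \le (\log x)^K$ and in $(b_i)$. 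The main obstacle I expect is step (3): getting the character sum bound uniform in $q$ (including composite $q$, where one must multiply local nonvanishing factors across $\ell \mid q$ and track that $\theta$ stays bounded away from $1$ independently of $q$), and carefully handling the small primes $\ell \le \COG$ dividing $q$ where $\alpha_{G_i}(\ell)$ or various $\beta$'s could vanish — this is where membership in $\Qgfam$ must be invoked to rule out the bad configurations, and where the argument genuinely uses the full strength of Delange's joint criterion rather than just the large-prime corollary. A secondary technical point is that the prime variables are constrained to be \emph{distinct} and \emph{increasing}, so the naive $J$-th power of a single sum must be corrected by inclusion–exclusion for coincidences $P_j = P_k$ and by a $1/J!$ ordering factor; these corrections are routine but must be shown to be lower-order, again using that $y = \exp((\log x)^{\delta/2})$ is large so that $\sum_{p > y} 1/p^2$ is negligible.
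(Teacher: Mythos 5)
Your setup (steps (1), (2), (4)) matches the paper's: write convenient $n = mP_J\cdots P_1$, turn the congruences into a condition on the tuple $(P_1,\dots,P_J)\bmod q$, use Siegel--Walfisz to decouple the primes from their residue classes, and handle the ordering/distinctness with a $1/J!$ factor. The fatal problem is your step (3). You claim each nontrivial character of $(\Z/q\Z)^M$ contributes a factor $\theta^J$ with a \emph{fixed} $\theta=\theta(\widehat G)<1$, and that $q^M\theta^J\le (\log x)^{MK}\theta^{\log_3 x}\to 0$. This is false: since $J=\lfloor\log_3 x\rfloor$, one has $\theta^J=\exp((\log\theta)\log_3 x)=(\log_2 x)^{\log\theta}$, which is only polynomially small in $\log_2 x$, whereas $q^M$ may be as large as $(\log x)^{MK}=\exp(MK\log_2 x)$. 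So a constant per-character gap cannot beat the $q^M$ characters when only $J\asymp\log_3 x$ large primes are available; your range of $q$ would shrink to roughly $q\le(\log_2 x)^{c}$. What is actually needed, and what the paper proves (Proposition \ref{prop:Vqwi_ReducnToBddModulus}), is a \emph{power-saving} bound in the modulus for each local character sum: Weil's bound $D\ell^{1/2}$ at primes $\ell>\CG$ and the Cochrane--Zheng bounds at prime powers $\ell^{e}$, with the Smith normal form of the coefficient matrix of $\{G_i'\}$ controlling $\ord_\ell$ of the combined polynomial so that the hypotheses of those bounds apply; then the per-character saving $\ell^{-e_0N/D}$ beats the $\ell^{e_0M}$ characters precisely because $N=J\ge MD+1$.

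There is a second gap hiding in your remark that small primes $\ell\le\COG$ dividing $q$ (and high powers of them) ``can be absorbed into the hypothesis $q\in\Qgfam$ together with Delange's criteria.'' Membership in $\Qgfam$ is a qualitative fixed-modulus equidistribution statement; it gives no uniform quantitative gap for the finite character sums over $U_{\ell^e}$ (indeed, at small primes the normalized local coefficient can be $1$ in absolute value, e.g.\ when $\sum_i r_iG_i$ degenerates mod $\ell$), so no character-sum argument of your type can produce the main term $x/q^M$ there. The paper's resolution is structural: it splits off a divisor $Q_0\mid q$ of bounded size supported on the primes $\le C(\widehat G)$ (with high prime powers truncated), proves $\#\mathcal V_{J,M}(q;\cdot)/\phi(q)^J=(1+o(1))(Q_0/q)^M\{\#\mathcal V_{J,M}(Q_0;\cdot)/\phi(Q_0)^J+O(C^{-J})\}$ by the exponential-sum machinery, and then, rather than estimating the $Q_0$-part analytically, runs the decomposition backwards to identify it with the count of convenient $n$ satisfying the congruences mod $Q_0$, which is $(1+o(1))x/Q_0^M$ because the \emph{fixed} modulus $Q_0$ lies in $\Qgfam$. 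Without this reduction-to-bounded-modulus step (or some substitute), and without the power-saving estimates, your outline does not yield the claimed uniformity for $q\le(\log x)^K$.
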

\begin{proof}
Writing each convenient $n$ uniquely in the form $mP_J \cdots P_1$, where $m, P_J, \dots , P_1$ satisfy \eqref{eq:convnform}, we find that $g_i(n) = g_i(m) + \sum_{j=1}^J G_i(P_j)$. The conditions $g_i(n) \equiv b_i \pmod q$ ($1 \le i \le M$) can then be rewritten as $(P_1, \dots , P_J)$ mod $q$ $\in \VqmPr \coloneqq \VJMqbigim$, where 
$$\VJMqwi \coloneqq \left\{(v_1, \dots , v_J) \in (U_q)^J: ~ (\forall i) ~ \sum_{j=1}^J G_i(v_j) \equiv w_i \pmod q\right\}.$$
(Note that this set can be defined for any set of polynomials $\{G_i\}_{i=1}^M$ regardless of whether or not they come from a set of additive functions.) As a consequence, 
\begin{equation}\label{eq:ConvnInitialSplit}
\begin{split}
\mathlarger{\sum}_{\substack{n \le x\text{ convenient }\\(\forall i) ~ g_i(n) \equiv b_i \pmod q}} 1 ~ ~ &= \mathlarger{\sum}_{m \le x} ~ \mathlarger{\sum}_{(v_1, \dots , v_J) \in \VqmPr} \mathlarger{\sum}_{\substack{P_1, \dots , P_J\\P_1 \cdots P_J \le x/m\\L_m < P_J < \cdots < P_1\\(\forall j) ~ P_j \equiv v_j \pmod q}} 1\\
&= \mathlarger{\sum}_{m \le x} ~\mathlarger{\sum}_{(v_1, \dots , v_J) \in \VqmPr} \frac1{J!} \mathlarger{\sum}_{\substack{P_1, \dots , P_J > L_m\\P_1 \cdots P_J \le x/m\\ P_1, \dots , P_J \text{ distinct }\\(\forall j) ~ P_j \equiv v_j \pmod q}} 1,
\end{split}
\end{equation}
where in the last equality above, we have noted that the conditions $P_1 \cdots P_J \le x/m$ and $(P_1, \dots , P_J)$ mod $q$ $\in \VqmPr$ are both independent of the ordering of $P_1, \dots , P_J$. 

We now estimate the innermost sum on $P_1, \dots , P_J$ by removing the congruence conditions on the $P_j$. For each tuple $(v_1,\dots, v_J)\bmod{q} \in \VqmPr$, we see that
\[ \mathlarger{\sum}_{\substack{P_1, \dots , P_J > L_m\\P_1 \cdots P_J \le x/m\\ P_1, \dots , P_J \text{ distinct }\\(\forall j) ~ P_j \equiv v_j \pmod q}} 1 =  \mathlarger{\sum}_{\substack{P_2, \dots , P_J > L_m\\P_2 \cdots P_J \le x/mL_m\\ P_2, \dots , P_J \text{ distinct }\\(\forall j) ~ P_j \equiv v_j \pmod q}} \mathlarger{\sum}_{\substack{P_1 \ne P_2,\dots,P_J \\ L_m < P_1 \le x/mP_2\cdots P_J \\ P_1 \equiv v_1\pmod q}} 1.\] 

Since $L_m \ge y$ and $q\le (\log{x})^{K} = (\log{y})^{2K/\delta}$, the Siegel--Walfisz theorem \cite[Corollary 11.21]{MV07} yields
\[\mathlarger{\sum}_{\substack{P_1 \ne P_2,\dots,P_J \\ L_m < P_1 \le x/mP_2\cdots P_J \\ P_1 \equiv v_1\pmod q}} 1 = \frac{1}{\phi(q)} \mathlarger{\sum}_{\substack{P_1 \neq P_2,\dots,P_J \\ L_m < P_1 \le x/mP_2\cdots P_J}}1 + O\left(\frac{x}{mP_2\cdots P_J} \exp(-C_0 \sqrt{\log y})\right),\] 
for some positive constant $C_0 := C_0(K, \delta)$ depending only on $K$ and $\delta$. Putting this back into the last display, we find that
\[ \mathlarger{\sum}_{\substack{P_1, \dots , P_J > L_m\\P_1 \cdots P_J \le x/m\\ P_1, \dots , P_J \text{ distinct }\\(\forall j) ~ P_j \equiv v_j \pmod q}} 1 ~ = ~  \frac1{\phi(q)}\mathlarger{\sum}_{\substack{P_1, \dots , P_J > L_m\\P_1 \cdots P_J \le x/m\\ P_1, \dots , P_J \text{ distinct }\\(\forall j \ge 2) ~ P_j \equiv v_j \pmod q}} 1 ~ + ~ O\left(\frac xm \exp\left(-\frac12 C_0 (\log{x})^{\delta/4}\right)\right),  \]
where we have put the bound 
$$\mathlarger{\sum}_{P_2, \dots , P_J \le x} \frac1{P_2 \cdots P_J} \le \left(\mathlarger{\sum}_{p \le x} \frac1p\right)^{J-1} \le (2\log_2 x)^{J-1} \le \exp(O((\log_3 x)^2)).$$
Proceeding in the same way to successively remove the congruence conditions on $P_2,\dots,P_J$, we deduce that
\begin{equation}\label{eq:CongrueceRemoved}
\mathlarger{\sum}_{\substack{P_1, \dots , P_J > L_m\\P_1 \cdots P_J \le x/m\\ P_1, \dots , P_J \text{ distinct }\\(\forall j) ~ P_j \equiv v_j \pmod q}} 1 ~ = ~  \frac1{\phi(q)^J}\mathlarger{\sum}_{\substack{P_1, \dots , P_J > L_m\\P_1 \cdots P_J \le x/m\\ P_1, \dots , P_J \text{ distinct }}} 1 ~ + ~ O\left(\frac xm \exp\left(-\frac14 C_0 (\log{x})^{\delta/4}\right)\right).
\end{equation}
Inserting this estimate into \eqref{eq:ConvnInitialSplit} and noting that $\#\VqmPr \le \phi(q)^J \le (\log x)^{KJ}$, we obtain
\begin{equation}\label{eq:convnSplitForm}
\mathlarger{\sum}_{\substack{n \le x\text{ convenient }\\(\forall i) ~ g_i(n) \equiv b_i \pmod q}} 1 = \mathlarger{\sum}_{m \le x} \frac{\#\VqmPr}{\phi(q)^J} \Bigg(\frac1{J!}\mathlarger{\sum}_{\substack{P_1, \dots , P_J > L_m\\P_1 \cdots P_J \le x/m\\ P_1, \dots , P_J \text{ distinct }}} 1\Bigg) ~ + ~ O\left(x \exp\left(-\frac18 C_0 (\log{x})^{\delta/4}\right)\right).     
\end{equation}
The following proposition, which we shall establish momentarily, will provide the desired estimate on the cardinalities of the sets $\VqmPr$. For future convenience and independent interest, we state it in slightly greater generality than necessary in our immediate application. 
\begin{prop}\label{prop:Vqwi_ReducnToBddModulus}
Let $G_1, \dots , G_M \in \Z[T]$ be nonconstant polynomials, such that $\{G_i'\}_{1 \le i \le M} \subset \Z[T]$ are $\Q$-linearly independent. Let $D = \max_{1 \le i \le M} \deg G_i$ and $C \coloneqq \CG$ be a constant exceeding $\max\{\COG, (2D)^{2D+4}\}$, where $\COG$ is the constant in \eqref{eq:COGDefn}. We have
\begin{multline*}
\frac{\#\mathcal V_{N, M}\left(q; (w_i)_{i=1}^M\right)}{\phi(q)^N}\\ = \left(\frac{Q_0}q\right)^M \left\{\frac{\#\VNMQZwi}{\phi(Q_0)^N} + O\left(\frac1{C^N}\right)\right\} \mathlarger{\prod}_{\substack{\ell \mid q\\\ell>C}} \left(1+O\left(\frac{(2D)^N}{\ell^{N/D-M}}\right)\right), 
\end{multline*}
uniformly in $N \ge MD+1$, in all positive integers $q>1$, and in residue classes $w_1, \dots , w_M$ mod $q$, where $Q_0$ is a divisor of $q$ of size $O(1)$ supported on primes at most $C$. 
\end{prop}
To estimate the count $\#\VqmPr$ in \eqref{eq:convnSplitForm}, we apply the above proposition with $N \coloneqq J$ which goes to infinity with $x$ and hence exceeds $MD+1$ for all sufficiently large $x$. For the same reason, we find that as $x \rightarrow \infty$,
$$\mathlarger{\sum}_{\substack{\ell \mid q\\\ell>C}} \frac{(2D)^N}{\ell^{N/D-M}} \le (2D)^J \mathlarger{\sum}_{\substack{\ell \mid q\\\ell>C}} \frac1{\ell^{J/(D+2)}} \le \frac{(2D)^J}{C^{J/(2D+4)}} \mathlarger{\sum}_{\ell \ge 2} \frac1{\ell^2} \le \left(\frac{2D}{C^{1/(2D+4)}}\right)^J = o(1).$$
As such, an application of the above proposition yields
$$\frac{\#\mathcal V_{J, M}\left(q; (w_i)_{i=1}^M\right)}{\phi(q)^J} = (1+o(1)) \left(\frac{Q_0}q\right)^M \left\{\frac{\#\VJMQZwi}{\phi(Q_0)^J} + O\left(\frac1{C^J}\right)\right\},$$
uniformly in $q$ and $(w_1, \dots , w_M)$ mod $q$, where $Q_0\mid q$ and $Q_0 = O(1)$. In particular, this same estimate holds for $\VqmPr = \VJMqbigim$, and we obtain from \eqref{eq:convnSplitForm},
\begin{align*}
\mathlarger{\sum}_{\substack{n \le x\text{ convenient }\\(\forall i) ~ g_i(n) \equiv b_i \pmod q}} 1 &= 
\begin{multlined}[t](1+o(1)) \left(\frac{Q_0}q\right)^M \mathlarger{\sum}_{m \le x} \left\{\frac{\#\VQZmPr}{\phi(Q_0)^J} + O(C^{-J})\right\} \Bigg(\frac1{J!}\mathlarger{\sum}_{\substack{P_1, \dots , P_J > L_m\\P_1 \cdots P_J \le x/m\\ P_1, \dots , P_J \text{ distinct }}} 1\Bigg)\\ + ~ O\left(x \exp\left(-\frac18 C_0 (\log{x})^{\delta/4}\right)\right)
\end{multlined}\\
&= (1+o(1)) \left(\frac{Q_0}q\right)^M \mathlarger{\sum}_{m \le x} \frac{\#\VQZmPr}{\phi(Q_0)^J} \Bigg(\frac1{J!}\mathlarger{\sum}_{\substack{P_1, \dots , P_J > L_m\\P_1 \cdots P_J \le x/m\\ P_1, \dots , P_J \text{ distinct }}} 1\Bigg) +  o\left(\frac x{q^M}\right)
\end{align*}
where we have recalled that 
$$\mathlarger{\sum}_{m \le x} \Bigg(\frac1{J!}\mathlarger{\sum}_{\substack{P_1, \dots , P_J > L_m\\P_1 \cdots P_J \le x/m\\ P_1, \dots , P_J \text{ distinct }}} 1\Bigg) \le \mathlarger{\sum}_{m \le x} \Bigg(\mathlarger{\sum}_{\substack{P_1, \dots , P_J\\P_1 \cdots P_J \le x/m\\ L_m < P_J< \cdots < P_1}} 1\Bigg) \le x.$$
But now, applying the estimate \eqref{eq:convnSplitForm} with $Q_0$ playing the role of $q$, we find that 
\begin{equation*}
\mathlarger{\sum}_{\substack{n \le x\text{ convenient }\\(\forall i) ~ g_i(n) \equiv b_i \pmod q}} 1 ~ = ~ (1+o(1)) \left(\frac{Q_0}q\right)^M \mathlarger{\sum}_{\substack{n \le x\text{ convenient }\\(\forall i) ~ g_i(n) \equiv b_i \pmod{Q_0}}} 1 ~ + ~ o\left(\frac x{q^M}\right).
\end{equation*}
Recall that any inconvenient $n \le x$ either has $P_J(n) \le y$ or has a repeated prime factor exceeding $y$. The number of $n \le x$ satisfying the latter condition is no more than $\sum_{p>y} \sum_{n \le x: ~p^2\mid n} 1 \le x\sum_{p>y} 1/p^2 \ll x/y = o(x)$. Moreover, by \cite[Lemma 2.3]{PSR22}, the number of $n \le x$ having $P_J(n) \le y$ is $\ll x(\log_2 x)^{J-1}/(\log x)^{1-\delta}$ which is also $o(x)$. This yields 
$$\mathlarger{\sum}_{\substack{n \le x \text{ convenient }\\(\forall i) ~ g_i(n) \equiv b_i \pmod q}} 1 ~ = ~ (1+o(1)) \left(\frac{Q_0}q\right)^M \mathlarger{\sum}_{\substack{n \le x\\(\forall i) ~ g_i(n) \equiv b_i \pmod{Q_0}}} 1 ~ + ~ o\left(\frac x{q^M}\right).$$
Finally, since $q$ lies in $\Qgfam$, so does its divisor $Q_0$, and as $Q_0 = O(1)$, the sum occurring on the right hand side above is $(1+o(1)) x/Q_0^M$. This completes the proof of Proposition \ref{prop:convenientprop}, up to that of Proposition \ref{prop:Vqwi_ReducnToBddModulus}.  
\end{proof}
Before beginning the proof of Proposition \ref{prop:Vqwi_ReducnToBddModulus}, we state some (relevant special cases of) known bounds on mixed exponential sums, which will provide some key technical inputs in our arguments. First, we have the renowned bound of Weil \cite{weil48} coming from his work on the Riemann Hypothesis for curves over a finite field (see also Schmidt \cite[chapter II, Corollary 2F]{schmidt76}). In what follows, we set $e(t) \coloneqq \exp(2 \pi i t)$. For a positive integer $Q$, we use $\chi_{0, Q}$ to denote the trivial (or principal) character mod $Q$. For a prime $\ell$, $\chi_{0, \ell}$ is also the principal character modulo any power of $\ell$. 
\begin{prop}\label{prop:Weil}
Let $F \in \Z[T]$ be a polynomial of degree $D_0 \ge 1$, and let $\ell>D_0$ be a prime such that $F$ doesn't reduce to a constant modulo $\ell$. Then we have $$\left|\sum_{v \bmod \ell} \chi_{0, \ell}(v) e(F(v)/\ell) \right| \le D_0 \ell^{1/2}.$$   
\end{prop}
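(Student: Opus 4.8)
The plan is to deduce the bound from the classical Weil estimate for complete exponential sums with polynomial argument, which I would take as a black box (citing Weil \cite{weil48}, or, for the elementary treatment via Stepanov's method, Schmidt \cite[chapter II, Corollary 2F]{schmidt76}). The one genuinely elementary observation needed is that the principal character $\chi_{0,\ell}$ merely deletes the single term $v \equiv 0 \pmod \ell$ from the complete sum: since $\chi_{0,\ell}(v)$ equals $1$ for $\ell \nmid v$ and $0$ otherwise,
\[ \sum_{v \bmod \ell} \chi_{0,\ell}(v) e(F(v)/\ell) = \left( \sum_{v \bmod \ell} e(F(v)/\ell) \right) - e(F(0)/\ell). \]
So it suffices to bound the complete sum $S(F) \coloneqq \sum_{v \bmod \ell} e(F(v)/\ell)$ by $(D_0-1)\ell^{1/2}$ and then absorb the removed term, using that $(D_0-1)\ell^{1/2}+1 \le D_0\ell^{1/2}$ holds because $\ell \ge 2 > 1$.

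To bound $S(F)$ I would pass to $\F_\ell$. The map $t \mapsto e(t/\ell)$ is a fixed nontrivial additive character $\psi$ of $\Z/\ell\Z \cong \F_\ell$, and $e(F(v)/\ell)$ depends only on $v \bmod \ell$ and on the reduction $\overline F \in \F_\ell[T]$ of $F$; hence $S(F) = \sum_{v \in \F_\ell} \psi(\overline F(v))$. By hypothesis $\overline F$ is nonconstant, say $d' \coloneqq \deg \overline F$ with $1 \le d' \le D_0$ — the lower bound is exactly the nonconstancy assumption, and the upper bound holds because reduction modulo $\ell$ cannot raise the degree. The point of the hypothesis $\ell > D_0$ is that then $d' < \ell$, so, $\ell$ being prime, $\gcd(d',\ell)=1$; equivalently, $\overline F$ is not of the Artin--Schreier exceptional shape $H^\ell - H + c$ (which would force $\ell \mid \deg$). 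Thus Weil's bound applies and gives $|S(F)| \le (d'-1)\ell^{1/2} \le (D_0-1)\ell^{1/2}$. Combining this with the displayed identity completes the argument.

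The only deep ingredient here is Weil's estimate itself — in effect the Riemann Hypothesis for the Artin--Schreier curve $y^\ell - y = \overline F(x)$ over $\F_\ell$, or its elementary surrogate via Stepanov's auxiliary-polynomial argument — and that is the step I would \emph{not} attempt to reprove; it is the sole obstacle to a self-contained treatment. Everything else (removing the $v \equiv 0$ term, the passage to $\F_\ell$, the degree-and-coprimality check, and the final one-line inequality) is routine and uses nothing beyond elementary properties of additive characters and of polynomial reduction.
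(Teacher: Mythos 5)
Your proposal is correct: the paper itself offers no proof of this proposition, stating it as a quoted special case of Weil's bound (via Weil \cite{weil48} and Schmidt \cite[chapter II, Corollary 2F]{schmidt76}), and your reduction — observing that $\chi_{0,\ell}$ merely deletes the single term $v\equiv 0$, applying the pure additive Weil bound $(d'-1)\ell^{1/2}$ to the reduction $\overline F$ (nonconstant of degree $d'\le D_0<\ell$, so not of Artin--Schreier shape), and absorbing the deleted term using $\ell^{1/2}>1$ — is a valid derivation of exactly the stated inequality from that same black box. This matches the paper's treatment in spirit, the only cosmetic difference being that the cited source already covers mixed character sums directly, whereas you handle the (trivial) multiplicative character by hand.
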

We will also need analogues of the above bound for prime powers, which have been obtained by Cochrane and Zheng \cite[equation (1.13), Theorems 1.1 and 8.1]{CZ99}. (See \cite{cochrane02} for more general results.) In what follows, for a nonconstant polynomial $F \in \Z[T]$ and a prime $\ell$, we define $t_\ell(F) \coloneqq \ordellFPr$, that is $t_\ell(F)$ is the highest power of $\ell$ dividing the coefficients of the polynomial $F'$. Let $\critsetAFell$ denote the set of nonzero roots in $\Fell$ of the polynomial $\ell^{-t_\ell(F)} F'$ (considered as a nonzero element of $\FellT$). We use $M_\ell(F)$ to denote the maximum of the multiplicities of the zeros of $\ell^{-t_\ell(F)} F'$ in $\Fell$, with $M_\ell(F) \coloneqq \infty$ if there is no such zero.
\begin{prop}\label{prop:CZ}
Let $F \in \Z[T]$ be a polynomial of degree $D_0 \ge 1$, and let $\ell^e$ be a prime power such that $F$ doesn't reduce to a constant modulo $\ell$. Let $t \coloneqq t_\ell(F)$ and $M \coloneqq M_\ell(F)$.
\begin{enumerate}
\item[(i)] If $\ell>2$ and $e \ge t + 2$, then $$\left|\mathlarger{\sum}_{v \bmod \ell^e} \chiZell(v) e(F(v)/\ell^e)\right| \le D_0 \cdot \ell^{t/(M+1)} \cdot \ell^{e(1-1/(M+1))}.$$
\item[(ii)] For $\ell=2$ and $e \ge t + 3$, we have $$\left|\mathlarger{\sum}_{v \bmod 2^e} \chi_{0, 2}(v) e(F(v)/2^e)\right| \le 2D_0 \cdot 2^{t/(M+1)} \cdot 2^{e(1-1/(M+1))}.$$
\end{enumerate} 
\end{prop}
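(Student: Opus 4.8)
The plan is to follow the classical approach to exponential sums to prime-power moduli, in the form developed by Cochrane and Zheng: to \emph{descend} repeatedly, essentially halving the exponent $e$ while extracting the power of $\ell$ gained from a sum over a linear variable, organising the descent around the critical points of $F$ modulo $\ell$ (the nonzero roots of $\ell^{-\tellF}\overline{F'}$ in $\Fell$, that is, the elements of $\critsetAFell$, whose multiplicities are at most $M := M_\ell(F)$), and bottoming out the recursion at exponent $1$ with the Weil bound of Proposition~\ref{prop:Weil}. Throughout, the coprimality restriction imposed by $\chiZell$ never does more than restrict the outer summation variable, and is carried along routinely; I will suppress it in this sketch and simply write $S(F,\ell^{e})$ for the sum under consideration, $F$ being a polynomial that is not constant modulo $\ell$.

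For $\ell$ odd and $e \ge \tellF + 2$, the descent step goes as follows. Put $h := \lceil e/2 \rceil$ and $k := e-h$, and write $v = u + \ell^{h} w$ with $u$ ranging over residues modulo $\ell^{h}$ and $w$ over residues modulo $\ell^{k}$. Since $F \in \Z[T]$ forces each divided derivative $F^{(j)}/j!$ to lie in $\Z[T]$, Taylor expansion gives $F(u + \ell^{h} w) \equiv F(u) + \ell^{h} w\, F'(u) \pmod{\ell^{2h}}$, and $2h \ge e$, so every term of order $\ge 2$ in $\ell^{h} w$ vanishes modulo $\ell^{e}$. Carrying out the (complete) sum over $w$ produces the identity
\[
S(F,\ell^{e}) \;=\; \ell^{k} \sum_{\substack{u \bmod \ell^{h} \\ \ell^{k} \,\mid\, F'(u)}} e\left(\frac{F(u)}{\ell^{e}}\right),
\]
where $u$ is additionally required to be coprime to $\ell$ when that restriction is in force. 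If $k \le \tellF$ the divisibility condition is vacuous and one either iterates this identity or bounds the shorter right-hand sum directly; the substantive range is $k > \tellF$, where writing $F' = \ell^{\tellF} F_1$ with $F_1 \not\equiv 0 \pmod{\ell}$ turns $\ell^{k}\mid F'(u)$ into $\ell^{k-\tellF}\mid F_1(u)$, which in particular forces $u \bmod \ell$ into $\critsetAFell$.

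With the critical points exposed, I would set up the recursion proper. For each $\alpha \in \critsetAFell$, of multiplicity $\nu(\alpha)\le M$ as a root of $\overline{F_1}$, substituting $u = \alpha + \ell u'$ into the right-hand side of the descent identity and Taylor-expanding $F$ and $F'$ about $\alpha$ rewrites the contribution of $\alpha$ as a unimodular constant times a power of $\ell$ times an exponential sum $S(\widetilde F_\alpha, \ell^{e'})$ of the same shape, with strictly smaller exponent $e' < e$ and with a critical multiplicity no larger than $\nu(\alpha)$. Running this recursion from exponent $e$ down to exponent $1$ (where Proposition~\ref{prop:Weil} contributes $\le D_0 \ell^{1/2}$ when $\ell > D_0$, the finitely many remaining base cases only affecting the constant) and carefully summing the geometric progression of $\ell$-powers peeled off at the successive stages (their exponents being governed by the multiplicities encountered along the way), one arrives at $|S(F,\ell^{e})| \le D_0\, \ell^{\tellF/(M+1)}\, \ell^{\,e(1-1/(M+1))}$. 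The dominant contribution is that of the critical points of maximal multiplicity $M$: the bound $\sum_{\alpha} \nu(\alpha) \le \deg F_1 < D_0$ lets one absorb the number of critical points into the constant $D_0$, and the exponent $e - (e-\tellF)/(\nu+1)$ attached to a critical point of multiplicity $\nu$ is largest at $\nu = M$ because $e > \tellF$.

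The prime $\ell = 2$ is treated along the same lines, with the familiar $2$-adic modifications: squares modulo $2^{k}$ behave specially, which is exactly what forces the strengthened hypothesis $e \ge \tellF + 3$ and the extra factor of $2$ in the bound, and the quadratic steps of the recursion must be checked by hand. I expect the main obstacle to be precisely the bookkeeping in this recursion: a single descent step yields only a bound with exponent roughly $e(1-1/(2M))$, which for $M\ge 2$ is weaker than what is claimed, so one must iterate the descent along an optimally chosen schedule and establish the sharp per-critical-point estimate by a careful induction on $e$, keeping exact track of how $\tellF$ and the critical multiplicity transform under $u = \alpha + \ell u'$. The $\ell = 2$ analysis then adds a further layer of this same bookkeeping.
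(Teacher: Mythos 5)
This proposition is not proved in the paper at all: it is quoted directly from Cochrane and Zheng \cite[eq.\ (1.13), Theorems 1.1 and 8.1]{CZ99}, so the expected ``proof'' here is a citation. Your sketch instead outlines how one would reprove the Cochrane--Zheng bound from scratch, and the outline does faithfully reproduce the skeleton of their actual argument: the descent $v = u + \ell^{h}w$ with $h = \lceil e/2\rceil$, the exact evaluation of the complete sum over $w$ producing the condition $\ell^{e-h}\mid F'(u)$, the factorization $F' = \ell^{\tellF}F_1$ localizing $u$ at the critical points $\critsetAFell$, the recursion $u = \alpha + \ell u'$, and the termination at the Weil bound. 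All of those individual observations are correct.

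The genuine gap is that the entire quantitative content of the theorem is deferred rather than executed. You concede that one descent step only yields an exponent of roughly $e(1-1/(2M))$ and that the claimed bound requires ``an optimally chosen schedule'' and ``a careful induction on $e$,'' but that induction \emph{is} the theorem: one must prove, for each critical point $\alpha$ of multiplicity $\nu$, the sharp per-point bound $|S_\alpha| \le \nu\,\ell^{\tellF/(\nu+1)}\ell^{e(1-1/(\nu+1))}$, which requires tracking exactly how $e$, $\ord_\ell$ of the derivative, and the critical multiplicity transform under the substitution $u = \alpha + \ell u'$ (your assertion that the new sum has ``critical multiplicity no larger than $\nu(\alpha)$'' and ``strictly smaller exponent'' is precisely the inductive hypothesis that needs to be established, not an observation one can make in passing). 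The final summation over $\alpha$ using $\sum_\alpha \nu(\alpha) \le \deg \overline{F_1} \le D_0 - 1$ and the monotonicity of $e - (e-\tellF)/(\nu+1)$ in $\nu$ are fine, but they sit on top of an unproved core. The $\ell = 2$ case, where the stronger hypothesis $e \ge \tellF + 3$ and the extra factor of $2$ originate, is likewise only gestured at. As submitted, this is a correct roadmap to \cite{CZ99} rather than a proof; for the purposes of this paper, simply citing that reference (as the author does) is both sufficient and preferable.
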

\begin{proof}[Proof of Proposition \ref{prop:Vqwi_ReducnToBddModulus}] We start by showing that 
\begin{equation}\label{eq:Largeprime}
\#\mathcal V_{N, M}\left(\ell^e; (w_i)_{i=1}^M\right) = \frac{\phi(\ell^e)^N}{\ell^{eM}} \left(1+O\left(\frac{(2D)^N}{\ell^{N/D-M}}\right)\right) 
\end{equation}
uniformly for all primes $\ell>C = \CG$, positive integers $e \ge 1$ and $N \ge MD+1$, and $w_i \in \Z/\ell^e\Z$. Indeed, by the orthogonality of additive characters, we can write
\begin{equation}\label{eq:primepowerOrth}
\begin{split}
\#\mathcal V_{N, M}\left(\ell^e; (w_i)_{i=1}^M\right) &= \#\left\{(v_1, \dots , v_N) \in (U_{\ell^e})^N: ~ (\forall i) ~ \sum_{j=1}^N G_i(v_j) \equiv w_i \pmod{\ell^e}\right\}\\
&= \mathlarger{\sum}_{(v_1, \dots , v_N) \in (U_{\ell^e})^N} \mathlarger{\prod}_{i=1}^M \left(\frac1{\ell^e} \mathlarger{\sum}_{r_i \bmod \ell^e} e\left(-\frac{r_i w_i}{\ell^e}\right) ~ e\left(\frac{r_i}{\ell^e} \sum_{j=1}^N G_i(v_j)\right)\right)\\
&= \frac{\phi(\ell^e)^N}{\ell^{eM}} \left\{1+\frac1{\phi(\ell^e)^N} \largesum_{(r_1, \dots , r_M) \not\equiv (0, \dots , 0) \bmod \ell^e} e\left(-\frac1{\ell^e}\sum_{i=1}^M r_i w_i\right) \left(Z_{\ell^e; \, r_1, \dots, r_M}\right)^N\right\},
\end{split}
\end{equation}
where $Z_{\ell^e; \, r_1, \dots, r_M} \coloneqq \largesum_{v \bmod \ell^e} \chi_{0, \ell}(v) e\left(\frac1{\ell^e}\largesum_{i=1}^M r_i G_i(v)\right)$ and $\chi_{0, \ell}$ denotes the trivial character mod $\ell^e$ (which is also the trivial character mod $\ell$). Now in the case $D=1$, we must have $M=1$, so that we may write $G_1(T) \eqqcolon AT+B$ for some integers $A \ne 0$ and $B$. For each nonzero residue $r$ mod $\ell^e$, we have $r \eqqcolon \ell^{e-e_0} r'$ for some $e_0 \in \{1,  \cdots, e\}$ and some coprime residue $r'$ mod $\ell^{e_0}$. Hence, $|Z_{\ell^e; \, r}| = \ell^{e-e_0} \left|\sum_{\substack{v\bmod \ell^{e_0}\\\gcd(v, \ell^{e_0})=1}} e(r'Av/\ell^{e_0})\right|$. The last sum being a Ramanujan sum is nonzero precisely when $\ell^{e_0-1} | r'A$ (see properties of Ramanujan sums in \cite{HW08} and \cite{MV07}). But this forces $e_0=1$ because $\ell \nmid A$ (by definition of $\COG = C_0(\{G_1\})$) and $\ell \nmid r'$ (by definition of $r'$.) If $e_0=1$, then $|Z_{\ell^e; \, r}| \le \ell^{e-1}$, and since there are at most $\ell$ many residues $r$ mod $\ell^e$ which are divisible by $\ell^{e-1}$, we find from \eqref{eq:primepowerOrth} that
$$\#\mathcal V_{N, M}\left(\ell^e; (w_i)_{i=1}^M\right) = \frac{\phi(\ell^e)^N}{\ell^{e}} \left\{1+O\left(\frac1{\phi(\ell^e)^N} \cdot \ell \cdot (\ell^{e-1})^N \right)\right\} = \frac{\phi(\ell^e)^N}{\ell^{e}} \left\{1+O\left(\frac{2^N}{\ell^{N-1}} \right)\right\}$$
uniformly in $N \ge 1$. This establishes the bound \eqref{eq:Largeprime} in the case $D=1$, so in order to complete the proof of \eqref{eq:Largeprime}, we may assume that $D \ge 2$.

Now for a given tuple $(r_1, \dots , r_M) \not\equiv (0, \dots , 0)$ mod $\ell^e$, we must have $\gcd(\ell^e, r_1, \dots , r_M) = \ell^{e-e_0}$ for some $1 \le e_0 \le e$. Hence, we can write $r_i \coloneqq \ell^{e-e_0} r_i'$ for some $(r_1', \dots , r_M')$ mod $\ell^{e_0}$ satisfying $(r_1', \dots , r_M') \not\equiv (0, \dots , 0) \bmod \ell$, which shows that
$$\left|Z_{\ell^e; \, r_1, \dots, r_M}\right| = \ell^{e-e_0}\left|\largesum_{v \bmod \ell^{e_0}} \chi_{0, \ell}(v) e\left(\frac1{\ell^{e_0}}\largesum_{i=1}^M r_i' G_i(v)\right)\right| = \ell^{e-e_0}\left|\largesum_{v \bmod \ell^{e_0}} \chi_{0, \ell}(v) e\left(\frac{F(v)}{\ell^{e_0}}\right)\right|,$$ where $F(T) \coloneqq \sum_{i=1}^M r_i' (G_i(T)-G_i(0))$. Now we observe that since $\ell>\CG>\COG$, the polynomials $\{G_i'\}_{i=1}^M$ are $\F_\ell$-linearly independent, hence so are the polynomials $\{G_i-G_i(0)\}_{i=1}^M$. This prevents the polynomial $F$ from reducing to a constant mod $\ell$ (for if it did, then this constant would be zero). Consequently, if $e_0=1$, then Proposition \ref{prop:Weil} yields $\left|Z_{\ell^e; \, r_1, \dots, r_M}\right| \le \ell^{e-e_0} \cdot D \ell^{1/2} = D\ell^{e-1/2}$. On the other hand, if $e_0 \ge 2$, then from Proposition \ref{prop:CZ}(i), we obtain $\left|Z_{\ell^e; \, r_1, \dots, r_M}\right| \le \ell^{e-e_0} \cdot D \ell^{e_0(1-1/D)}= D \ell^{e-e_0/D}$; here we have noted that $\ell>C>2$, $\tellF=\ord_\ell(F') = \ordell\big(\sum_{i=1}^M r_i' G_i'\big) =0 \le e_0-2$ and that $M_\ell(F) \le \deg(F') \le D-1$. For each $1 \le e_0 \le e$, there are at most $\ell^{e_0M}$ many possible tuples $(r_1', \dots , r_M')$ mod $\ell^{e_0}$, hence at most $\ell^{e_0M}$ tuples $(r_1, \dots , r_M)$ mod $\ell^e$ satisfying $\gcd(\ell^e, r_1, \dots , r_M) = \ell^{e-e_0}$. We deduce that 
\begin{equation*}
\begin{split}
\largesum_{(r_1, \dots , r_M) \not\equiv (0, \dots , 0) \bmod \ell^e} & \left|Z_{\ell^e; \, r_1, \dots, r_M}\right|^N \le \ell^{M} \left(D \ell^{e-1/2}\right)^N + \largesum_{2 \le e_0 \le e} \ell^{e_0M} \left(D \ell^{e-e_0/D}\right)^N \\ &\le \largesum_{1 \le e_0 \le e} \ell^{e_0M} \left(D \ell^{e-e_0/D}\right)^N \le \frac{D^N \ell^{eN}}{\ell^{N/D-M}} \largesum_{r \ge 0} \frac1{\left(\ell^{N/D-M}\right)^r} \ll \frac{D^N \ell^{eN}}{\ell^{N/D-M}},
\end{split}
\end{equation*}   
where the last bound uses the fact that $N/D-M \ge 1/D$, so that the last sum occurring in the above display is no more than $\largesum_{r \ge 0} 
2^{-r/D} \ll 1$. (It is while passing from the first line to the second in the above display where we use the assumption that $D \ge 2$.) Inserting the bound obtained above into \eqref{eq:primepowerOrth} and noting that $\ell/(\ell-1) \le 2$ completes the proof of estimate \eqref{eq:Largeprime}.

Given an arbitrary positive integer $q$, let $\qtil \coloneqq \prod_{\substack{\ell^e \parallel q\\\ell \le C}} \ell^e$ denote the largest divisor of $q$ supported on primes not exceeding the constant $C$ (the ``$C$-smooth part" of $q$). We can again invoke the orthogonality of additive characters to write, for any tuple of residues $(w_1, \dots , w_M)$ mod $\qtil$, 
\begin{equation}\label{eq:qtilOrth}
\begin{split}
\#\mathcal V_{N, M}\left(\qtil; (w_i)_{i=1}^M\right) &= \#\left\{(v_1, \dots , v_N) \in (U_{\qtil})^N: ~ (\forall i) ~ \sum_{j=1}^N G_i(v_j) \equiv w_i \pmod{\qtil}\right\}\\
&= \frac1{\qtil^M} \largesum_{r_1, \dots , r_M \bmod \qtil} e\left(-\frac1{\qtil}\sum_{i=1}^M r_i w_i\right) \left(Z_{\qtil; \, r_1, \dots, r_M}\right)^N,
\end{split}
\end{equation}
where $Z_{\qtil; \, r_1, \dots, r_M} \coloneqq \largesum_{v \bmod \qtil} \chiZqtil(v) e\left(\frac1{\qtil}\largesum_{i=1}^M r_i G_i(v)\right)$ and $\chiZqtil$ denotes the trivial character mod $\qtil$. 

Now with $\beta_1, \dots , \beta_M$ being the invariant factors of the matrix $A_0$ defined in \eqref{eq:A0Def} (listed in ascending order), we fix $R \coloneqq R(\widehat{G}) \in \NatNos_{\ge 2}$ to be any integer constant such that 
$$R> CD (4D |\beta_M|)^C.$$ Let $Q_1 \coloneqq \prod_{\ell^e \parallel \qtil: ~ e>R} \ell^{e-R}$ and $Q_0 \coloneqq \qtil/Q_1 = \prod_{\ell^e \parallel \qtil} \ell^{\min\{e, R\}} = \prod_{\ell^e \parallel q: ~\ell \le C} \ell^{\min\{e, R\}}$, so that $Q_0\mid q$ and $Q_0 \le \prod_{\ell \le C} \ell^R \ll 1$. We write $\#\mathcal V_{N, M}\left(\qtil; (w_i)_{i=1}^M\right) \eqqcolon S' + S''$, where $S'$ counts the contribution of all tuples $(r_1, \dots , r_M)$ mod $\qtil$ where all the components $r_i$ are divisible by $Q_1$, that is, 
$$S' \coloneqq \frac1{\qtil^M} \largesum_{\substack{r_1, \dots , r_M \bmod \qtil\\ (r_1, \dots , r_M) \equiv (0, \dots , 0) \bmod{Q_1}}} e\left(-\frac1{\qtil}\sum_{i=1}^M r_i w_i\right) \left(Z_{\qtil; \, r_1, \dots, r_M}\right)^N.$$ 
Any tuple $(r_1, \dots , r_M)$ mod $\qtil$ counted in $S'$ is thus of the form $(Q_1 s_1, \dots , Q_1 s_M)$ for some tuple $(s_1, \dots , s_M)$ mod $Q_0$ that is uniquely determined by $(r_1, \dots , r_M)$. We find that 
\begin{equation*}
\begin{split}
Z_{\qtil; \, r_1, \dots, r_M} &= \largesum_{v \bmod \qtil} \chiZqtil(v) e\left(\frac1{Q_0}\largesum_{i=1}^M s_i G_i(v)\right)\\ &= \largesum_{u \bmod Q_0} \chi_{0, Q_0}(u) e\left(\frac1{Q_0}\largesum_{i=1}^M s_i G_i(u)\right) \largesum_{\substack{v \in U_{\qtil}\\v \equiv u \bmod Q_0}} 1 = \frac{\phi(\qtil)}{\phi(Q_0)} ~ Z_{Q_0; \, s_1, \dots, s_M} 
\end{split}
\end{equation*}
where the last equality above follows from a simple counting argument. Consequently, 
\begin{equation*}
\begin{split}
S' = \frac1{\qtil^M} \left(\frac{\phi(\qtil)}{\phi(Q_0)}\right)^N \largesum_{s_1, \dots , s_M \bmod Q_0} e\left(-\frac1{Q_0}\sum_{i=1}^M s_i w_i\right) \left(Z_{Q_0; \, s_1, \dots, s_M}\right)^N.
\end{split}
\end{equation*}
An application of the orthogonality identity \eqref{eq:qtilOrth} with $Q_0$ playing the role of $\qtil$ yields 
\begin{equation}\label{eq:S'FinalForm}
S' = \left(\frac{Q_0}{\qtil}\right)^M \left(\frac{\phi(\qtil)}{\phi(Q_0)}\right)^N \#\mathcal V_{N, M}\left(Q_0; (w_i)_{i=1}^M\right).
\end{equation}
Now we consider the sum 
$$S'' = \frac1{\qtil^M} \largesum_{\substack{r_1, \dots , r_M \bmod \qtil\\ (r_1, \dots , r_M) \not\equiv (0, \dots , 0) \bmod Q_1}} e\left(-\frac1{\qtil}\sum_{i=1}^M r_i w_i\right) \left(Z_{\qtil; \, r_1, \dots, r_M}\right)^N.$$
Consider any tuple $(r_1, \dots , r_M)$ mod $\qtil$ occurring in $S''$. By the definition of $Q_1$, there exists a prime power $\ell^e \parallel \qtil$ for which $e>R$ but $v_\ell(\gcd(r_1, \dots, r_M))<e-R$. Letting $Q' \coloneqq \qtil/\gcd(\qtil, r_1, \dots, r_M)$ and $r_i' \coloneqq r_i/\gcd(\qtil, r_1, \dots, r_M)$ (for $1 \le i \le M$), we therefore deduce that for any such aforementioned prime $\ell$, we have $v_\ell(Q')>R$, so that $Q'$ is not $(R+1)$-free. Moreover, $r_1', \dots, r_M'$ are uniquely determined mod $Q'$ and satisfy $\gcd(Q', r_1', \dots, r_M')=1$. Now for each $i$, we can write $r_i'/Q' = \sum_{\ell^{e_\ell} \parallel Q'} r_{i, \ell}'/\ell^{e_\ell}$ mod $1$, where the sum is over the prime powers $\ell^{e_\ell}$ exactly dividing $Q'$; \footnote{We are just applying Bezout's identity; equivalently, this may be thought of as partial fraction decomposition over the integers.} here, for each $\ell^{e_\ell} \parallel Q'$, $r_{i, \ell}'$ is uniquely determined mod $\ell^{e_\ell}$ by the relation $r_{i, \ell}' \prod_{\substack{p^{e_p} \parallel Q'\\p \ne \ell}} p^{e_p} \equiv r_i' \pmod{\ell^{e_\ell}}$. Since $\gcd(Q', r_1', \dots, r_M')=1$, it follows that $\ell \nmid \gcd(r_{1, \ell}', \dots, r_{M, \ell}')$ for each prime $\ell \mid Q'$. By the Chinese Remainder Theorem, we can factor 
\begin{equation}\label{eq:CRTFactoring}
Z_{\qtil; \, r_1, \dots, r_M} = \frac{\phi(\qtil)}{\phi(Q')} \sum_{v \bmod Q'} \chi_{0, Q'}(v) e\left(\frac1{Q'} \sum_{i=1}^M r_i' G_i(v)\right) = \frac{\phi(\qtil)}{\phi(Q')} \prod_{\ell^{e_\ell} \parallel Q'} Z_{\ell^{e_\ell}; \, r_{1, \ell}', \dots, r_{M, \ell}'}.  
\end{equation} 
Write $G_i'(T) \eqqcolon \sum_{j=0}^{D-1} a_{i, j} T^j$ as in the discussion preceding \eqref{eq:A0Def}. We claim that for any prime $\ell \mid Q'$, 
\begin{equation}\label{eq:smallv_ell}
t_{\ell} \coloneqq t_\ell(r_{1, \ell}', \dots, r_{M, \ell}') \coloneqq \ord_\ell\left(\sum_{i=1}^M r_{i, \ell}' G_i'\right) = v_\ell\left(\gcd_{0 \le j \le D-1} \sum_{i=1}^M a_{i, j} r_{i, \ell}' \right) \le v_\ell(\beta_M),   
\end{equation}
where (recall) $\beta_1, \dots , \beta_M$ are the invariant factors of the matrix $A_0$ in \eqref{eq:A0Def}. The third equality simply follows from the fact that $\sum_{i=1}^M r_{i, \ell}' G_i'(T) = \sum_{j=0}^{D-1} \left(\sum_{i=1}^M a_{i, j} r_{i, \ell}'\right) T^j$. To show the inequality in \eqref{eq:smallv_ell}, it suffices to show that $\ell^{t_\ell}$ must divide $\beta_M$. To do the latter, we recall that, by the theory of modules over a principal ideal domain, that there exist a $D \times D$ integer matrix $P_0$ and an $M \times M$ integer matrix $R_0$ such that $\det P_0, \det R_0 \in \{\pm 1\}$ and $P_0 A_0 R_0$ is the Smith normal form $S_0$ of $A_0$. As such, $P_0 A_0 = S_0 R_0^{-1}$ where the matrix $R_0^{-1}$ has integer entries $(k_{i, j})_{1 \le i, j \le M}$. Now $\ell^{t_\ell}$ divides all the numbers $\{\sum_{i=1}^M a_{i, j} r_{i, \ell}' : 0 \le j \le D-1\}$, which are precisely the entries of the matrix $A_0 \begin{pmatrix}r_{1, \ell}' & \dots & r_{M, \ell}'\end{pmatrix}^\intercal$ (here $\begin{pmatrix}r_{1, \ell}' & \dots & r_{M, \ell}'\end{pmatrix}^\intercal$ denotes the column vector listing the $r_{i, \ell}'$). As such, $\ell^{t_\ell}$ also divides the entries of the matrix $P_0 A_0  \begin{pmatrix}r_{1, \ell}' & \dots & r_{M, \ell}'\end{pmatrix}^\intercal$, and hence also those of the matrix 
\begin{equation}\label{eq:SQcolumn}
S_0 R_0^{-1} \begin{pmatrix}r_{1, \ell}'\\ \dots \\ \dots \\r_{M, \ell}'\end{pmatrix}_{M \times 1} = \begin{pmatrix}\beta_1(k_{1, 1} r_{1, \ell}'  + \dots + k_{1, M} r_{M, \ell}' )\\ \dots \\ \dots \\ \beta_M(k_{M, 1} r_{1, \ell}'  + \dots + k_{M, M} r_{M, \ell}')\\ 0 \\ \dots \\ 0\end{pmatrix}_{D \times 1}.
\end{equation}
But now if $\ell$ divides all of the numbers $k_{1, 1} r_{1, \ell}'  + \dots + k_{1, M} r_{M, \ell}', \dots \dots, k_{M, 1} r_{1, \ell}'  + \dots + k_{M, M} r_{M, \ell}'$, then 
$$R_0^{-1} \begin{pmatrix}r_{1, \ell}'\\ \dots \\ \dots \\r_{M, \ell}'\end{pmatrix}_{M \times 1} = \begin{pmatrix} k_{1, 1} r_{1, \ell}'  + \dots + k_{1, M} r_{M, \ell}'\\ \cdots \\ \cdots \\ k_{M, 1} r_{1, \ell}'  + \dots + k_{M, M} r_{M, \ell}'\end{pmatrix}_{M\times 1} \equiv \begin{pmatrix}0 \\ \cdots \\ \cdots \\ 0\end{pmatrix}_{M\times 1} \pmod \ell.$$
This forces $\ell$ to divide $\gcd(r_{1, \ell}', \dots , r_{M, \ell}')$, which is impossible since $\ell \mid Q'$ (see the line preceding \eqref{eq:CRTFactoring}). Since $\ell^{t_\ell}$ divides the entries of the rightmost matrix in \eqref{eq:SQcolumn}, it follows that $\ell^{t_\ell}$ must divide at least one of the invariant factors $\beta_i$, and hence must also divide $\beta_M$. This establishes our claim \eqref{eq:smallv_ell}. 

We will now show that for any prime power $\ell^{e_\ell} \parallel Q'$ for which $e_\ell > R$, we have
\begin{equation}\label{eq:ZellellBound}
|Z_{\ell^{e_\ell}; \, r_{1, \ell}', \dots, r_{M, \ell}'}| = \left|\sum_{v \bmod \ell^{e_\ell}} \chi_{0, \ell}(v) e\left(\frac1{\ell^{e_\ell}} \sum_{i=1}^M r_{i, \ell}' G_i(v)\right)\right| \le 2D |\beta_M| \ell^{e_\ell(1-1/D)}.
\end{equation}
To show this, we note that since $G_i'(T) = \sum_{j=0}^{D-1} a_{i, j} T^j$, we have $G_i(T)-G_i(0) = \sum_{j=0}^{D-1} \frac{a_{i, j}}{j+1} T^{j+1}$ (recall that $(j+1) \mid a_{i, j}$), so that with 
\begin{equation}\label{eq:cEllDef}
\begin{split}
c_\ell \coloneqq \ord_\ell\left(\sum_{i=1}^M r_{i, \ell}' (G_i(T)-G_i(0))\right) 
= v_\ell\left(\gcd_{0 \le j \le D-1} \frac{\sum_{i=1}^M a_{i, j} r_{i, \ell}'}{j+1}\right),
\end{split}
\end{equation}
we have
\begin{multline*}
|Z_{\ell^{e_\ell}; \, r_{1, \ell}', \dots, r_{M, \ell}'}| = \left|\sum_{v \bmod \ell^{e_\ell}} \chi_{0, \ell}(v) e\left(\frac1{\ell^{e_\ell-c_\ell}} \sum_{j=0}^{D-1} \left(\ell^{-c_\ell}\frac{\sum_{i=1}^M a_{i, j} r_{i, \ell}'}{j+1} \right) v^{j+1}\right)\right|\\ = \ell^{c_\ell} \left|\sum_{v \bmod \ell^{e_\ell-c_\ell}} \chi_{0, \ell}(v) e\left(\frac{\Ftilv}{\ell^{e_\ell-c_\ell}}\right)\right|,
\end{multline*}
where $\FtilT \coloneqq \sum_{j=0}^{D-1} \left(\ell^{-c_\ell} \frac{\sum_{i=1}^M a_{i, j} r_{i, \ell}'}{j+1} \right) T^{j+1} \in \Z[T]$.  By \eqref{eq:cEllDef} and \eqref{eq:smallv_ell}, we see that $\Ftil$ cannot reduce to a constant mod $\ell$ and that $c_\ell \le t_\ell \le v_\ell(\beta_M)$. Furthermore, \eqref{eq:smallv_ell} also shows that $\ordell(\Ftil') = \ordell\left(\sum_{j=0}^{D-1} \left(\sum_{i=1}^M a_{i, j} r_{i, \ell}' \right) T^j\right) - c_\ell = t_\ell-c_\ell \le v_\ell(\beta_M) - c_\ell \le R-3-c_\ell < (e_\ell-c_\ell)-3$. (Here we use $e_\ell>R > |\beta_M|+3$.) Consequently, some subpart of Proposition \ref{prop:CZ} applies, yielding
\begin{equation*}
\begin{split}
|Z_{\ell^{e_\ell}; \, r_{1, \ell}', \dots, r_{M, \ell}'}| &\le \ell^{c_\ell} \cdot 2D \ell^{\ordell(\Ftil')} \cdot \ell^{(e_\ell - c_\ell)(1-1/(M_\ell(\Ftil)+1))}\\
&\le \ell^{c_\ell} \cdot 2D \ell^{v_\ell(\beta_M) - c_\ell} \cdot \ell^{e_\ell(1-1/D)} \le 2D |\beta_M| \ell^{e_\ell(1-1/D)}.
\end{split}
\end{equation*}
Here, $M_\ell(\Ftil)$ is the largest multiplicity of a zero in $\Fell$ of the polynomial $\ell^{-\ordell(\Ftil')} \Ftil'$, and we have used that this multiplicity is no more than $\deg(\Ftil') \le D-1$. This establishes \eqref{eq:ZellellBound}.

Applying the bound \eqref{eq:ZellellBound} to each prime power $\ell^{e_\ell} \parallel Q'$ for which $e_\ell>R$, and applying the trivial bound $|Z_{\ell^{e_\ell}; \, r_{1, \ell}', \dots, r_{M, \ell}'}| \le \phi(\ell^{e_\ell})$ for all the other prime powers $\ell^{e_\ell} \parallel Q'$, the factorization \eqref{eq:CRTFactoring} yields
\begin{equation*}
\begin{split}
|Z_{\qtil; \, r_1, \dots, r_M}| &\le \frac{\phi(\qtil)}{\phi(Q')} ~ \Bigg(\prod_{\substack{\ell^{e_\ell} \parallel Q'\\e_\ell \le R}} \phi(\ell^{e_\ell})\Bigg) \cdot \Bigg(\prod_{\substack{\ell^{e_\ell} \parallel Q'\\e_\ell > R}} 2D|\beta_M| \ell^{e_\ell(1-1/D)}\Bigg)\\
& \le (2D |\beta_M|)^{\omega(Q')} \cdot \phi(\qtil) \cdot \prod_{\substack{\ell^{e_\ell} \parallel Q'\\e_\ell > R}} \Bigg( \frac{\ell^{e_\ell(1-1/D)}}{\phi(\ell^{e_\ell})}\Bigg) \le (4D |\beta_M|)^C \cdot \frac{\phi(\qtil)}{A^{1/D}}.
\end{split}
\end{equation*}
Here $A$ denotes the $(R+1)$-full part of $Q'$ and in the last bound above, we have noted that $\omega(Q') \le \omega(\qtil) \le \sum_{\ell \le C} 1 \le C$. Note that since $Q'$ is not $(R+1)$-free, we have $A>1$. 

Applying this bound for each of the sums $Z_{\qtil; \, r_1, \dots, r_M}$ occurring in $S''$, we obtain
\begin{align}\allowdisplaybreaks
|S''| 
\le \frac{(4D |\beta_M|)^{CN} \phi(\qtil)^N}{\qtil^M} \largesum_{\substack{A\mid \qtil: ~ A>1\\A\text{ is }(R+1)\text{-full}}} \frac1{A^{N/D}} \largesum_{\substack{Q', r_1', \dots, r_M'\\Q' \mid \qtil: \, (R+1)\text{-full part of }Q'\text{ is }A\\ r_1', \dots , r_M' \bmod Q'\\ \gcd(r_1', \dots , r_M', Q')=1}} \largesum_{\substack{r_1, \dots , r_M \bmod \qtil\\Q' = \qtil/\gcd(\qtil, r_1, \dots , r_M)\\(\forall i)~ r_i' = r_i/\gcd(\qtil, r_1, \dots , r_M)}} 1. \nonumber
\end{align}
Since any choice of $Q'\mid \qtil$ and residues $r_1', \dots , r_M'$ mod $Q'$ uniquely determines $r_1, \dots , r_M$ mod $\qtil$ by the relations $r_i = r_i'\qtil/Q'$, we see that
\begin{align*}\allowdisplaybreaks
|S''| &\le \frac{(4D |\beta_M|)^{CN} \phi(\qtil)^N}{\qtil^M} \largesum_{\substack{A\mid \qtil: ~ A>1\\A\text{ is }(R+1)\text{-full}}} \frac1{A^{N/D}} \largesum_{\substack{Q' \mid \qtil\\(R+1)\text{-full part of }Q'\text{ is }A}} ~ ~ ~ \largesum_{\substack{r_1', \dots , r_M' \bmod Q'\\ \gcd(r_1', \dots , r_M', Q')=1}} 1\\
&\le \frac{(4D |\beta_M|)^{CN} \phi(\qtil)^N}{\qtil^M} \largesum_{\substack{A\mid \qtil: ~ A>1\\A\text{ is }(R+1)\text{-full}}} \frac1{A^{N/D}} \largesum_{\substack{Q' \mid \qtil\\(R+1)\text{-full part of }Q'\text{ is }A}} (Q')^M.
\end{align*}
Now any divisor $Q'$ of $\qtil$ with $(R+1)$-full part equal to $A$ must be of the form $A d$ for some $(R+1)$-free divisor $d$ of $\qtil$, and $d \le \prod_{\ell \mid \qtil} ~\ell^R \le \prod_{\ell \le C} \ell^R \le C^{CR} \ll 1$. Consequently the innermost sum in the last expression above is at most $A^M \sum_{\substack{d \mid \qtil\\d\text{ is }(R+1)\text{-free}}} d^M \ll A^M$, leading to 
\begin{align}\label{eq:S''StepTwo}\allowdisplaybreaks
|S''| \ll \frac{(4D |\beta_M|)^{CN} \phi(\qtil)^N}{\qtil^M} \largesum_{\substack{A\mid \qtil: ~ A>1\\A\text{ is }(R+1)\text{-full}}} \frac1{A^{N/D-M}},
\end{align}
Since $N \ge MD+1$, we have $N/D-M \ge 1/D$, so that for all primes $\ell$, 
$$\sum_{v \ge R+1} \frac1{\ell^{v(N/D-M)}} \le \frac1{\ell^{(R+1)(N/D-M)}} \sum_{v \ge 0} \frac1{\ell^{v/D}} \le \frac1{\ell^{(R+1)(N/D-M)}} \cdot \frac{2^{1/D}}{2^{1/D}-1} \le \frac{2D \cdot 2^{1/D}}{2^{(R+1)/D}} \le \frac{2D^2}R \le \frac12.$$
(Here, we have noted that $2^{1/D}-1 = \exp(\log 2/D)-1 \ge \log 2/D > 1/2D$ and that $2^{R/D} \ge R/D \ge 4D$.) This means that for all primes $\ell \le C$, we have
$$\log\left(1+\sum_{v \ge R+1} \frac1{\ell^{v(N/D-M)}}\right) \ll \sum_{v \ge R+1} \frac1{\ell^{v(N/D-M)}} \ll \frac1{\ell^{(R+1)(N/D-M)}} \ll \frac1{\ell^{RN/D}} \le \frac1{2^{RN/D}},$$ 
and since $\qtil$ is $C$-smooth, this leads to
\begin{equation*}
\begin{split}
\largesum_{\substack{A\mid \qtil: ~ A>1\\A\text{ is }(R+1)\text{-full}}} \frac1{A^{N/D-M}} &\le \largeprod_{\ell \mid \qtil} \left(1+\sum_{v \ge R+1} \frac1{\ell^{v(N/D-M)}}\right)-1 = \exp\left(O\left(\frac1{2^{RN/D}}\right)\right)-1 \ll \frac1{2^{RN/D}}.
\end{split}
\end{equation*}
Inserting this into \eqref{eq:S''StepTwo}, we obtain
$$|S''| \ll \left(\frac{(4D |\beta_M|)^C}{2^{R/D}}\right)^N \frac{\phi(\qtil)^N}{\qtil^M} \le C^{-N} \frac{\phi(\qtil)^N}{\qtil^M},$$
noting in the last step that ${(4D |\beta_M|)^C}\big/{2^{R/D}} \le {D (4D |\beta_M|)^C} \big/R \le C^{-1}$, by the definition of $R$. From \eqref{eq:S'FinalForm}, we now obtain $$\#\mathcal V_{N, M}\left(\qtil; (w_i)_{i=1}^M\right) = S'+S'' = \left(\frac{Q_0}{\qtil}\right)^M \phi(\qtil)^N \left\{\frac{\#\mathcal V_{N, M}\left(Q_0; (w_i)_{i=1}^M\right)}{\phi(Q_0)^N} + O\left(C^{-N}\right)\right\}.$$
Finally, writing $\#\VNMqwi = \#\mathcal V_{N, M}\left(\qtil; (w_i)_{i=1}^M\right) \prod_{\ell^e \parallel q:~\ell>C}\#\mathcal V_{N, M}\left(\ell^e; (w_i)_{i=1}^M\right)$, and invoking the estimate above for $\#\mathcal V_{N, M}\left(\qtil; (w_i)_{i=1}^M\right)$ in conjunction with \eqref{eq:Largeprime} for all the powers $\ell^e \parallel q$ of primes $\ell>C$, we obtain the estimate claimed in Proposition \ref{prop:Vqwi_ReducnToBddModulus}.
\end{proof}
\section{Joint equidistribution without input restriction: Proof of Theorem \ref{thm:UnrestrictedInput}}
By Proposition \ref{prop:convenientprop}, it remains to show that the count of inconvenient $n \le x$ for which all the $g_i(n) \equiv b_i \pmod q$ is $o(x/q^M)$ as $x \rightarrow \infty$ in the prescribed ranges of $q$. Setting $z \coloneqq x^{1/\log_2 x}$, we first remove from these $n \le x$, the ones that either have $P(n) \le z$ or have a repeated prime factor exceeding $y$. By known estimates on smooth numbers \cite[Theorem 5.13\text{ and }Corollary 5.19, Chapter III.5]{tenenbaum15}, the number of $n \le x$ having $P(n) \le z$ is $O\left(x/(\log x)^{(1+o(1)) \log_3 x}\right)$, and as seen before, the number of $n \le x$ having a repeated prime factor exceeding $y$ is $O(x/y)$. Both of these bounds being $o(x/q^M)$, it suffices to consider the contribution $\Sigma_0$ of those inconvenient $n \le x$ which have $P(n)>z$ and do not possess any repeated prime factor exceeding $y$. 

By the definition of ``inconvenient", any $n$ counted in $\Sigma_0$ must also have $P_J(n) \le y$, and hence can be written in the form $n = mP$, where $P \coloneqq P(n)>z$, $P_J(m) \le y$ and $\gcd(m, P)=1$. As such, $g_i(n) = g_i(m)+G_i(P)$, and the congruence $g_i(n) \equiv b_i \pmod q$ shows that $P$ mod $q$ lies in the set $\mathcal V_{1, M}\left(q; (b_i-g_i(m))_{i=1}^M\right)$. Setting 
$$\xiG(q) \coloneqq \max \{\#\mathcal V_{1, M}\left(q; (w_i)_{i=1}^M \right): w_1, \dots , w_M \bmod q\},$$
the Brun-Titchmarsh theorem shows that for a given $m$, the number of possibilities for $P$ is no more than
\begin{equation}\label{eq:NoOfPbound}
\largesum_{\substack{z< P \le x/m\\P \bmod q ~ \in \mathcal V_{1, M}\left(q; (b_i-g_i(m))_{i=1}^M\right)}} 1 ~ ~ \ll ~ \xiG(q) \frac{x/m}{\phi(q)\log(z/q)} \ll ~ \frac{\xiG(q)}{\phi(q)} ~ \frac{x \log_2 x}{m \log x}.
\end{equation}
To estimate the sum of $1/m$ over $m \le x$ having $P_J(m) \le y$, we write each such $m$ in the form $BA$ where $P(B) \le y < P^-(A)$ and $\Omega(A) \le J$. As such, the sum of the reciprocals of the possible $A$ is at most
$$\largesum_{\substack{A \le x\\\Omega(A) \le J}} \frac1A \le \left(1+\sum_{p \le x } \frac1p \right)^J \le (2 \log_2 x)^J \le \exp\left(O((\log_3 x)^2)\right),$$
while the sum of the reciprocals of the possible $B$ is no more than
$$\largesum_{B:~ P(B) \le y} \frac1B \le \largeprod_{p \le y} \left(1+\frac1p+O \left(\frac1{p^2}\right)\right) \le \exp\left(\sum_{p \le y} \frac1p + O(1)\right) \ll \log y.$$
Collecting estimates, we obtain 
\begin{equation}\label{eq:Sum1/mOverPJ(m)ley}
\largesum_{\substack{m \le x\\P_J(m) \le y}} \frac1m \ll (\log x)^{\delta/2} \exp\left(O((\log_3 x)^2)\right),
\end{equation}
which from the bound \eqref{eq:NoOfPbound} reveals that
\begin{equation}\label{eq:Sigma0GenBound}
\Sigma_0 \ll \frac{\xiG(q)}{\phi(q)} \frac{x \log_2 x}{(\log x)^{1-\delta/2}} \exp\left(O((\log_3 x)^2)\right) \ll \frac{\xiG(q)}q \frac x{(\log x)^{1-2\delta/3}}.
\end{equation}
We now proceed to show the assertions in the three subparts of the theorem. 

(i), (ii) If at least one of $G_1, \dots , G_M$ is linear, then $\xiG(q) \ll 1$ and we obtain $\Sigma_0 \ll x/q(\log x)^{1-2\delta/3}$. This is $o(x/q^M)$ as soon as $q^{M-1} \le (\log x)^{1-\delta}$. This condition is tautological if $M=1$, and for $M \ge 2$ it is equivalent to $q \le (\log x)^{(1-\delta)/(M-1)}$.

If $q$ is squarefree, then with $D_1 = \deg G_1$, we see that $\#\mathcal V_{1, M}\left(q; (w_i)_{i=1}^M\right) \le \#\mathcal V_{1, 1}\left(q; w_1 \right)$ $=\prod_{\ell \mid q} \#\mathcal V_{1, 1}\left(\ell; w_1 \right) \ll (D_1)^{\omega(q)} \le (\log x)^{\delta/100}$. (Here we have noted that for any sufficiently large $\ell$, the polynomial $G_1(T)-w_1$ cannot vanish identically mod $\ell$, and hence has at most $D_1$ roots mod $\ell$.) As such, from \eqref{eq:Sigma0GenBound}, it follows that $\Sigma_0 \ll x/q(\log x)^{1-3\delta/4}$. This is automatically $o(x/q^M)$ if $M=1$, while for $M \ge 2$, we need only assume that $q \le (\log x)^{(1-\delta)/(M-1)}$.

(iii) Finally, assume (by relabelling if necessary) that $\deg G_1 = \Dmin$. By a result of Konyagin \cite{konyagin79b,konyagin79a} we have $\#\mathcal V_{1, M}\left(q; (w_i)_{i=1}^M\right) \le \#\mathcal V_{1, 1}\left(q; w_1 \right) \ll q^{1-1/\Dmin}$. (To be precise, we apply Konyagin's bound to the polynomial congruence $(G_1(T) - w_1)/d \equiv 0 \pmod{q/d}$, where $d$ is the greatest common divisor of $q$ and the coefficients of the polynomial $G_1(T)-w_1$. Note that each solution mod $q/d$ lifts to a solution mod $q$ in $\le d \ll 1$ ways.) Consequently, we obtain $\Sigma_0 \ll x/q^{1/\Dmin} (\log x)^{1-2\delta/3}$. This is $o(x/q^M)$ as soon as $q^{M-1/\Dmin} \le (\log x)^{1-\delta}$, completing the proof of the theorem.
\subsection{Optimality of range of $q$ in Theorem \ref{thm:UnrestrictedInput}}\label{subsec:OptimalityUnrestricted}
We will now construct polynomials $G_1, \dots , G_M$ which will show that the various restrictions on the range of $q$ in Theorem \ref{thm:UnrestrictedInput} are all essentially optimal. To that end, let $G \in \Z[T]$ be any monic polynomial having a nonzero integer root $a$. Let $G_i(T) \coloneqq G(T)^i$, so that the polynomials $\{G_i'\}_{i=1}^M$ having distinct degrees are automatically $\Q$-linearly independent. Letting $\COG$ be the constant coming from \eqref{eq:COGDefn}, Corollary \ref{cor:JtEquidLargePD} shows that any integer $q$ having $P^-(q)>\COG$ lies in $\Qgfam$. Moreover, any prime $p$ satisfying $p \equiv a \pmod q$ also satisfies $G(p)\equiv 0 \pmod q$, hence also $g_i(p) = G_i(p) = G(p)^i \equiv 0 \pmod q$ for all $i$. As such, for all $q \le (\log x)^K$ having $P^-(q)>\max\{|a|, \COG\}$, the Siegel--Walfisz Theorem yields
\begin{equation*}
\largesum_{\substack{n \le x\\(\forall i) ~ g_i(n) \equiv 0 \pmod q}} 1 \ge
\largesum_{\substack{p \le x\\p \equiv a \pmod q}} 1 \gg \frac x{\phi(q)\log x} \gg \frac x{q\log x}.
\end{equation*}
For any $M \ge 2$, this last expression grows strictly faster than $x/q^M$ as soon as $q^{M-1}$ grows faster than $\log x$, for instance if $q > (\log x)^{(1+\delta)/(M-1)}$. This construction shows that the range of $q$ in Theorem \ref{thm:UnrestrictedInput}(ii) is essentially optimal.

Now consider any $M \ge 1$, $D \ge 1$, and let $G(T) \coloneqq (T-1)^d$. Then with $G_i(T) = G(T)^i$, we see that $\Dmin = d$. For moduli $q$ of the form $q_1^d$ (for some $q_1>1$), any prime $p \equiv 1 \pmod{q_1}$ satisfies $G(p)= (p-1)^d \equiv 0 \pmod q$. Hence, if $q_1 \le (\log x)^K$ has $P^-(q_1)>\COG$, then $q = q_1^d \le (\log x)^{Kd}$ also has $P^-(q)>\COG$, and we find that on the one hand $q \in \Qgfam$, while on the other,
\begin{equation*}
\largesum_{\substack{n \le x\\(\forall i) ~ g_i(n) \equiv 0 \pmod q}} 1 \ge
\largesum_{\substack{p \le x\\p \equiv 1 \pmod {q_1}}} 1 \gg \frac x{\phi(q_1)\log x} \gg \frac x{q^{1/d}\log x}.
\end{equation*}
This last expression grows strictly faster than $x/q^M$ as soon as $q^{M-1/d}$ grows faster than $\log x$, for instance if $q > (\log x)^{(1+\delta)(M-1/d)^{-1}}$. Since $d = \Dmin$, this example shows that the range of $q$ in Theorem \ref{thm:UnrestrictedInput}(iii) is essentially optimal as well. 
\section{Complete uniformity for general moduli: Proof of Theorem \ref{thm:restrictediput_generalq}}\label{sec:CompleteUnifGenq}
In section \ref{sec:ConvnMainTerm}, we had defined $J = \lfloor \log_3 x \rfloor$ and for the purposes of this theorem, we took $\delta \coloneqq 1$, so that $y = \exp((\log x)^{1/2})$. If $x$ is sufficiently large then any convenient $n$ has $P_{MD+1}(n) \ge P_J(n) \ge y > q$. Moreover, by \cite[Lemma 2.3]{PSR22} the number of $n \le x$ having $P_{MD+1}(n) \le q$ is $o(x)$. By Proposition \ref{prop:convenientprop}, it remains to show that there are $o(x/q^M)$ many inconvenient $n \le x$ having $P_{MD+1}(n) > q$ and satisfying $g_i(n) \equiv b_i \pmod q$ for all $i$. 

Now by the arguments in the beginning of the previous section, the number of $n \le x$ which either have $P(n) \le z = x^{1/\log_2 x}$ or have a repeated prime factor exceeding $y$ is $o(x/q^M)$. As such, in order to complete the proof of the theorem, it suffices to show that
\begin{equation}\label{eq:GenqUnif_Remaining}
\largesum_{\substack{n \le x: ~ P_{MD+1}(n)>q\\P_J(n) \le y; ~ P(n)>z\\p>y \implies p^2 \nmid n\\(\forall i) ~ g_i(n) \equiv b_i \pmod q}} 1 ~ ~ ~\ll ~ \frac x{q^M(\log x)^{1/3}}
\end{equation}
uniformly in $q \le (\log x)^K$ and in residues $(b_1, \dots , b_M)$ mod $q$. 

Assume first that $M \ge 2$. To show \eqref{eq:GenqUnif_Remaining} write the count on the left hand side as 
$$\Sigma_0 + \Sigma_1 + \Sigma_2 + \Sigma,$$
where
\begin{itemize}
\item $\Sigma_0$ counts those $n$ which are exactly divisible by at least $MD+1$ many distinct primes exceeding $q$,
\item For $r \in \{1, 2\}$, $\Sigma_r$ counts the $n$ that are exactly divisible by at least $(M-r)D+1$ but at most $(M-r+1)D$ many distinct primes exceeding $q$, and
\item $\Sigma$ counts the remaining $n$, namely, those that are exactly divisible by at most $(M-2)D$ many distinct primes exceeding $q$. 
\end{itemize}
We proceed to show that the expression on the right hand side of \eqref{eq:GenqUnif_Remaining} bounds each of $\Sigma_0$, $\Sigma_1$, $\Sigma_2$ and $\Sigma$. To do this, we shall bound the cardinalities of the sets $\VNMqwi$ that arise by discarding some of the congruences defining the set. The following consequence of Proposition \ref{prop:Vqwi_ReducnToBddModulus} will be useful: for any fixed $r \in \{0, 1, \dots , M-1\}$, we have 
\begin{equation}\label{eq:ConvenientPropConseq}
\#\mathcal V_{(M-r)D+1, M-r}\left(q; (w_i)_{i=1}^{M-r}\right) \ll \frac{\phi(q)^{(M-r)D+1}}{q^{M-r}} \exp\left(O\left((\log q)^{1-1/D}\right)\right)  
\end{equation}
uniformly in moduli $q>1$ and in residue classes $(w_1, \dots , w_M)$ mod $q$. Here, we have noted that $\{G_i'\}_{i=1}^{M-r}$ are $\Q$-linearly independent, that $\max_{1 \le i \le M-r} \deg G_i \le D$, and that
\begin{equation*}
\begin{split}
\largeprod_{\ell \mid q} \left(1+O\left(\frac1{\ell^{1/D}}\right)\right) \le \exp\left(O\left(\largesum_{\ell \le \omega(q)}\frac1{\ell^{1/D}}\right)\right) \ll \exp\left(O\left((\log q)^{1-1/D}\right)\right),
\end{split}    
\end{equation*}
with the last sum on $\ell$ being bounded by partial summation and Chebyshev's estimates.

\textit{Bounding $\Sigma_0$:} Any $n$ counted in $\Sigma_0$ is exactly divisible by at least $\MDOne$ many prime factors exceeding $q$ and has $P(n)>z$, $P_J(n) \le y$. Hence, $n$ can be written in the form $mP_1 \cdots P_{\MDOne}$, where $P_1 \coloneqq P(n) > z$, $q < P_{\MDOne} < \cdots < P_1$, $P_J(m) \le y$ and $\gcd(m, P_1 \cdots P_{\MDOne}) = 1$. As such, $g_i(n) = g_i(m) + \sum_{1 \le j \le \MDOne} G_i(P_j)$ and the congruences $g_i(n) \equiv b_i \pmod q$ force $(P_1, \dots , P_{\MDOne})$ mod $q$ to lie in the set $V_m \coloneqq \mathcal V_{\MDOne, M}\left(q; (b_i-g_i(m))_{i=1}^M\right)$.

Given $m$ and $\widehat{v} \coloneqq (v_1, \dots , v_{\MDOne}) \in V_m$, we count the number of possible $P_1, \dots , P_{\MDOne}$ satisfying $(P_1, \dots , P_{\MDOne}) \equiv \widehat{v}$ mod $q$. For a given choice of $P_2, \dots , P_{\MDOne}$, the number of possible $P_1$ is, by the Brun-Titchmarsh inequality, no more than
$$\largesum_{\substack{z<P_1 \le x/mP_2 \cdots P_{\MDOne}\\P_1 \equiv v_1 \pmod q}} 1 \ll \frac{x/mP_2 \cdots P_{\MDOne}}{\phi(q) \log(z/q)} \ll \frac{x \log_2 x}{\phi(q) mP_2 \cdots P_{\MDOne} \log x}.$$
For each $j \in \{2, \dots , \MDOne\}$, the sum on $P_j$ is, by Brun-Titchmarsh and partial summation, no more than 
$$\largesum_{\substack{q<p\le x\\p\equiv v_j \pmod q}} \frac1p \ll \frac {\log_2 x}{\phi(q)}.$$ 
Hence, given $m$ and $\widehat{v} = (v_1, \dots , v_{\MDOne}) \in V_m$, the number of possible $P_1, \dots , P_{\MDOne}$ satisfying $(P_1, \dots , P_{\MDOne}) \equiv \widehat{v}$ mod $q$ is 
$$\ll \frac{x (\log_2 x)^{O(1)}}{\phi(q)^{\MDOne} m \log x},$$
leading to 
$$\Sigma_0 \ll \frac{x (\log_2 x)^{O(1)}}{\log x} \largesum_{\substack{m \le x\\P_J(m) \le y}} \frac1m \cdot \frac{\#V_m}{\phi(q)^{\MDOne}}.$$
Using \eqref{eq:ConvenientPropConseq} to bound $V_m = \mathcal V_{\MDOne, M}\left(q; (b_i-g_i(m))_{i=1}^M\right)$, followed by \eqref{eq:Sum1/mOverPJ(m)ley} to bound the resulting sum on $m$, we deduce that
$$\Sigma_0 \ll \frac{x(\log_2 x)^{O(1)}}{q^M \log x}  \exp\left(O\left((\log q)^{1-1/D}\right)\right) \largesum_{\substack{m \le x\\P_J(m) \le y}} \frac1m \ll \frac x{q^M (\log x)^{1/3}},$$
yielding the desired bound for $\Sigma_0$. It is to be noted that this bound on $\Sigma_0$ holds true for any $M \ge 1$.

\textit{Bounding $\Sigma_1$:} Recall that $\OmStqn \coloneqq \largesum_{\substack{p^k \parallel n\\p>q, ~ k>1}} k$ counts (with multiplicity) the number of prime factors of $n$ exceeding $q$ that appear to an exponent larger than $1$ in the prime factorization of $n$; as such, the squarefull part of $n$ (i.e., the largest squarefull divisor of $n$) exceeds $q^{\OmStqn}$. 

Now, any $n$ counted in $\Sigma_1$ is exactly divisible by least $(M-1)D+1$ but at most $MD$ many distinct primes exceeding $q$. Since $P_{\MDOne}(n)>q$, it follows that $\OmStqn \ge 2$, so that the squarefull part of $n$ exceeds $q^2$. As such, $n$ can be written in the form $mSP_{(M-1)D+1} \cdots P_1$, where $m, S, P_{(M-1)D+1}, \dots , P_1$ are pairwise coprime, $P_1 \coloneqq P(n)>z$, $q< P_{(M-1)D+1} < \cdots < P_1$, $P_J(m) \le y$, and $S>q^2$ is squarefull. Since $g_i(n) = g_i(mS) + \sum_{1 \le j \le (M-1)D+1} G_i(P_j)$, the congruence conditions $g_i(n) \equiv b_i \pmod q$, considered for $1 \le i \le M-1$, force $(P_1, \dots , P_{(M-1)D+1}) \equiv \widehat{v}$ mod $q$ for some $\widehat{v} \coloneqq (v_1, \dots , v_{(M-1)D+1}) \in \mathcal V_{(M-1)D+1, M-1}\left(q; (b_i-g_i(mS))_{i=1}^{M-1}\right)$. 

Given $m, S$ and $\widehat{v}$, the argument given for bounding $\Sigma_0$ above shows that the number of possible $P_1, \dots , P_{(M-1)D+1}$ satisfying $(P_1, \dots , P_{(M-1)D+1}) \equiv \widehat{v}$ mod $q$ is 
$$\ll \frac{x(\log_2 x)^{O(1)}}{\phi(q)^{(M-1)D+1} mS\log x}.$$
This yields
$$\Sigma_1 \ll \frac{x(\log_2 x)^{O(1)}}{\log x} \largesum_{\substack{m \le x\\P_J(m) \le y}} \frac1m \sum_{S>q^2\text{ squarefull }} \frac1S \cdot \frac{\#\mathcal V_{(M-1)D+1, M-1}\left(q; (b_i-g_i(mS))_{i=1}^{M-1}\right)}{\phi(q)^{(M-1)D+1}},$$
so that by \eqref{eq:ConvenientPropConseq}, 
$$\Sigma_1 \ll \frac{x(\log_2 x)^{O(1)}}{q^{M-1} \log x} \exp\left(O\left((\log q)^{1-1/D}\right)\right) \largesum_{\substack{m \le x\\P_J(m) \le y}} \frac1m \sum_{S>q^2\text{ squarefull }} \frac1S.$$
Using \eqref{eq:Sum1/mOverPJ(m)ley} along with the bound $\sum_{S>q^{2}\text{ squarefull }} 1/S \ll 1/q$, we obtain 
$$\Sigma_1 \ll \frac{x(\log_2 x)^{O(1)}}{q^M (\log x)^{1/2}}  \exp\left(O\left((\log q)^{1-1/D} + (\log_3 x)^2\right)\right)\ll \frac x{q^M (\log x)^{1/3}},$$
showing the desired bound for $\Sigma_1$.

\textit{Bounding $\Sigma_2$:} Any $n$ counted in $\Sigma_2$ is exactly divisible by least $(M-2)D+1$ but at most $(M-1)D$ many distinct primes exceeding $q$. Since $P_{\MDOne}(n)>q$, it follows that $\OmStqn \ge MD+1 -(M-1)D = D+1$. Now assume that $D \ge 3$, so that $\OmStqn \ge 4$, and the squarefull part of $n$ exceeds $q^4$. In this case, any $n$ counted in $\Sigma_2$ can be written in the form $mSP_{(M-2)D+1} \cdots P_1$, where $m, S, P_{(M-2)D+1}, \dots , P_1$ are pairwise coprime, $P_1 \coloneqq P(n)>z$, $q< P_{(M-2)D+1} < \cdots < P_1$, $P_J(m) \le y$, and $S>q^4$ is squarefull. Since $g_i(n) = g_i(mS) + \sum_{1 \le j \le (M-2)D+1} G_i(P_j)$, the congruence conditions $g_i(n) \equiv b_i \pmod q$, considered for $1 \le i \le M-2$, force $(P_1, \dots , P_{(M-2)D+1}) \equiv \widehat{v}$ mod $q$ for some $\widehat{v} \coloneqq (v_1, \dots , v_{(M-2)D+1}) \in \mathcal V_{(M-2)D+1, M-2}\left(q; (b_i-g_i(mS))_{i=1}^{M-2}\right)$. Replicating the argument given for $\Sigma_1$ shows that 
\begin{align*}\allowdisplaybreaks
\Sigma_2 &\ll \frac{x(\log_2 x)^{O(1)}}{\log x} \largesum_{\substack{m \le x\\P_J(m) \le y}} \frac1m \sum_{S>q^{4}\text{ squarefull }} \frac1S \cdot \frac{\#\mathcal V_{(M-2)D+1, M-2}\left(q; (b_i-g_i(mS))_{i=1}^{M-2}\right)}{\phi(q)^{(M-2)D+1}}\\
&\ll \frac{x(\log_2 x)^{O(1)}}{q^{M-2} \log x} \exp\left(O\left((\log q)^{1-1/D}\right)\right) \largesum_{\substack{m \le x\\P_J(m) \le y}} \frac1m \sum_{S>q^{4}\text{ squarefull }} \frac1S\\
&\ll \frac{x(\log_2 x)^{O(1)}}{q^M (\log x)^{1/2}}  \exp\left(O\left((\log q)^{1-1/D} + (\log_3 x)^2\right)\right)\ll \frac x{q^M (\log x)^{1/3}}.
\end{align*}
showing the desired bound for $\Sigma_2$ in the case $D \ge 3$. 

Now assume that $D=2$, so that $2 \le M \le D = 2$ forces $M=2$. Any $n$ counted in $\Sigma_2$ has $P_5(n)>q$ but at most $(M-1)D=2$ of these exactly divide $n$. Hence, $n$ is either divisible by the cube of a prime exceeding $q$ or is (exactly) divisible by the squares of two distinct primes exceeding $q$. Any $n$ of the first kind can be written in the form $m p^s P$ for some primes $p, P$ satisfying $P=P(n)>z$ and $q<p<P$, and some positive integers $s, m$ satisfying $s \ge 3$, $P_J(m) \le y$. Given $m, p$ and $s$, the number of possible $P \in (z, x/mp^s]$ is $O(x/mp^s \log z)$. Summing this over all $s \ge 3$, all $p>q$, and then over all possible $m$, and invoking \eqref{eq:Sum1/mOverPJ(m)ley} in conjunction with the fact that $\sum_{p>q} 1/p^3 \ll 1/q^2$, we find that the total contribution of all $n$ of the first kind is $\ll x/q^2(\log x)^{1/3}$ which is absorbed in the desired expression. 

On the other hand, if $n$ is divisible by the squares of two distinct primes exceeding $q$, then it is of the form $m p_1^{s_1} p_2^{s_2} P$ for some primes $P, p_1, p_2$ satisfying $P=P(n)>z$ and $q<p_2<p_1<P$, and for some positive integers $m, s_1, s_2$ satisfying $s_1 \ge 2$, $s_2 \ge 2$ and $P_J(m) \le y$. Given $m, p_1, p_2, s_1, s_2$, the number of possible $P \in (z, x/m p_1^{s_1} p_2^{s_2}]$ is $O(x/m p_1^{s_1} p_2^{s_2} \log z)$. Summing this over all possible $s_i$, $p_i$, and $m$ via \eqref{eq:Sum1/mOverPJ(m)ley} and the fact that $\sum_{p>q} 1/p^2 \ll 1/q$, we deduce that the total contribution of all $n$ that are divisible by the squares of two primes is $\ll x/q^2(\log x)^{1/3}$. This establishes the desired bound on the sum $\Sigma_2$ in the remaining case $D=2$.

\textit{Bounding $\Sigma$:} Any $n$ counted in $\Sigma$ has $P_{\MDOne}(n)>q$, but no more than $(M-2)D$ of these exactly divide $n$. Since $D = \max_{1 \le i \le M} \deg G_i \ge M$, it follows that any such $n$ has $\OmStqn \ge \MDOne - (M-2)D = 2D + 1 \ge 2M+1$, so that the squarefull part of $n$ exceeds $q^{2M+1}$. Consequently, any $n$ counted in $\Sigma$ can be written in the form $mSP$, where $P \coloneqq P(n)>z$, $S>q^{2M+1}$ is squarefull and $P_J(m) \le y$. Given $m$ and $S$, the number of possible $P \in (z, x/mS]$ is $O(x/mS\log z)$. Summing this over all squarefull $S>q^{2M+1}$ and then over all $m$ by means of \eqref{eq:Sum1/mOverPJ(m)ley}, we find that 
$$\Sigma \ll \frac{x\log_2 x}{\log x} \largesum_{\substack{m \le x\\P_J(m) \le y}} \frac1m \largesum_{\substack{S>q^{2M+1}\\S\text{ squarefull}}} \frac1S \ll \frac x{q^{M+1/2} (\log x)^{1/3}},$$
yielding the desired bound for $\Sigma$, and completing the proof of the estimate \eqref{eq:GenqUnif_Remaining}, for $M \ge 2$.

The case $M=1$ is much simpler: we need only split the count in the left hand side of \eqref{eq:GenqUnif_Remaining} as $\Sigma_0+\Sigma$ where $\Sigma_0$ counts those $n$ that have no repeated prime factor exceeding $q$. As such, any $n$ counted in $\Sigma_0$ is exactly divisible by at least $D+1$ primes exceeding $q$, whereupon the exact same arguments given for the ``$\Sigma_0$" defined in the case $M \ge 2$ show that $\Sigma_0 \ll x/q(\log x)^{1/3}$. On the other hand, any $n$ counted in $\Sigma$ has a repeated prime factor exceeding $q$, and thus is of the form $mSP$, with $P \coloneqq P(n)>z$, $S>q^2$ squarefull and $P_J(m) \le y$. Proceeding as for the ``$\Sigma$" considered in the case $M \ge 2$, we obtain $\Sigma \ll x/q(\log x)^{1/3}$. This shows the estimate \eqref{eq:GenqUnif_Remaining} in the remaining case $M=1$, completing the proof of theorem. \hfill \qedsymbol
\section{Complete uniformity in squarefree moduli: Proof of Theorem \ref{thm:restrictediput_squarefreeq}}
Arguing as in the beginning of the previous section, in order to complete the proof of the theorem, it suffices to show the following analogue of \eqref{eq:GenqUnif_Remaining}
\begin{equation}\label{eq:SqfreeqUnif_Remaining}
\largesum_{\substack{n \le x: ~ P_{2M}(n)>q\\P_J(n) \le y; ~ P(n)>z\\p>y \implies p^2 \nmid n\\(\forall i) ~ g_i(n) \equiv b_i \pmod q}} 1 ~ ~ ~\ll ~ \frac x{q^M(\log x)^{1/3}}
\end{equation}
uniformly in squarefree $q \le (\log x)^K$ and in residues $(b_1, \dots , b_M)$ mod $q$. 

The following analogue of \eqref{eq:ConvenientPropConseq} will be useful for this purpose: for each $r \in \{0, 1, \dots , M-1\}$, we have 
\begin{equation}\label{eq:V2MMSqfreeq}
\#\mathcal V_{2(M-r), M-r}\left(q; (w_i)_{i=1}^{M-r}\right) \le \lambda^{\omega(q)} \frac{\phi(q)^{2(M-r)}}{q^{M-r}}  
\end{equation}
uniformly for squarefree $q>1$ and in residue classes $(w_1, \dots , w_{M-r})$ mod $q$, for some constant $\lambda \coloneqq \lambda(\widehat{G})>1$. It suffices to show this bound for $r=0$ for then it may be applied with $M-r$ playing the role of $M$ (recalling that $\{G_i'\}_{i=1}^{M-r}$ are $\Q$-linearly independent for any such $r$). 

As in Proposition \ref{prop:Vqwi_ReducnToBddModulus}, we let $C \coloneqq \CG$ be a constant exceeding $\max\{\COG, (2D)^{2D+4}\}$, with $\COG$ defined in \eqref{eq:COGDefn}. Then for all $\ell \le \CG$, we have trivially
\begin{equation}\label{eq:V2MMSmallPrimes}
\#\mathcal V_{2M, M}\left(\ell; (w_i)_{i=1}^{M}\right) \le \phi(\ell)^{2M} \le \lambda_1 \frac{\phi(\ell)^{2M}}{\ell^M}
\end{equation} 
by fixing $\lambda_1 \coloneqq \lambda_1(\widehat{G}) > \CG^M$. 

Now consider a prime $\ell>\CG$. By orthogonality we can write, as in \eqref{eq:primepowerOrth}, 
$$\#\mathcal V_{2M, M}\left(\ell; (w_i)_{i=1}^M\right) = \frac{\phi(\ell)^{2M}}{\ell^M} \left\{1+\frac1{\phi(\ell)^{2M}} \largesum_{(r_1, \dots , r_M) \not\equiv (0, \dots , 0) \bmod \ell} e\left(-\frac1\ell\sum_{i=1}^M r_i w_i\right) \left(Z_{\ell; \, r_1, \dots, r_M}\right)^{2M}\right\},$$
where $Z_{\ell; \, r_1, \dots, r_M} \coloneqq \largesum_{v \bmod \ell} \chi_{0, \ell}(v) e\left(\frac1{\ell}\largesum_{i=1}^M r_i G_i(v)\right)$. Since $\ell>\CG>\COG$, the polynomials $\{G_i'\}_{i=1}^M$ must be $\F_\ell$-linearly independent, so that for each $(r_1, \dots , r_M) \not\equiv (0, \dots , 0) \bmod \ell$, the polynomial $\sum_{i=1}^M r_i G_i(T)$ does not reduce to a constant mod $\ell$. As such, the Weil bound (Proposition \ref{prop:Weil}) yields $|Z_{\ell; \, r_1, \dots, r_M}| \le D\ell^{1/2}$, leading to 
\begin{equation}\label{eq:V2MMLargePrimes} 
\#\mathcal V_{2M, M}\left(\ell; (w_i)_{i=1}^M\right) = \frac{\phi(\ell)^{2M}}{\ell^M} \left\{1+O\left(\ell^M \frac{(D\ell^{1/2})^{2M}}{\phi(\ell)^{2M}}\right)\right\} \le \lambda_2 \frac{\phi(\ell)^{2M}}{\ell^M},
\end{equation}
for some constant $\lambda_2 \coloneqq \lambda_2(\widehat{G}) > \CG^M$. Finally, we choose $\lambda \coloneqq \max\{\lambda_1, \lambda_2\}$ and write, for any squarefree $q>1$, $\#\mathcal V_{2M, M}\left(q; (w_i)_{i=1}^M\right) = \largeprod_{\substack{\ell \mid q: ~\ell \le C}} \#\mathcal V_{2M, M}\left(\ell; (w_i)_{i=1}^M\right) \cdot \largeprod_{\substack{\ell \mid q:~ \ell > C}} \#\mathcal V_{2M, M}\left(\ell; (w_i)_{i=1}^M\right)$. Combining \eqref{eq:V2MMSmallPrimes} for all the prime divisors $\ell \le C$ with \eqref{eq:V2MMLargePrimes} for all the prime divisors $\ell > C$, we obtain the desired bound \eqref{eq:V2MMSqfreeq} for $r = 0$. As argued before, this also implies \eqref{eq:V2MMSqfreeq} for any $r \in \{0, 1, \dots , M-1\}$. 

Coming to the proof of \eqref{eq:SqfreeqUnif_Remaining}, we write the count on the left hand side as 
$$\Sigma_1 + \Sigma_2 + \dots + \Sigma_M + \Sigma,$$
where 
\begin{itemize}
\item $\Sigma_1$ counts those $n$ which are exactly divisible by at least $2M$ many distinct primes exceeding $q$,
\item For each $r \in \{1, \dots , M-1\}$, $\Sigma_{r+1}$ counts the $n$ that are exactly divisible by either $2M-2r$ many or by $2M-2r+1$ many distinct primes exceeding $q$, and
\item $\Sigma$ counts the remaining $n$, namely, those that are exactly divisible by at most one prime exceeding $q$. 
\end{itemize}
\textit{Bounding $\Sigma_1$:} Any $n$ counted in $\Sigma_1$ can be written in the form $mP_{2M} \cdots P_1$, where $P_1 \coloneqq P(n)>z$, $q<P_{2M} < \cdots < P_1$, $P_J(m) \le y$ and $\gcd(m, P_{2M} \cdots P_1)=1$. As such, the congruences $g_i(n) \equiv b_i \pmod q$ force $(P_1, \dots , P_{2M}) \equiv \widehat{v}$ mod $q$ for some $\widehat{v} \coloneqq (v_1, \dots , v_{2M}) \in \mathcal V_{2M, M}\left(q; (b_i-g_i(m))_{i=1}^M\right)$. Given $m$ and $\widehat{v}$, the arguments in the previous section show that the number of possible $P_1, \dots , P_{2M}$ satisfying $(P_1, \dots , P_{2M}) \equiv \widehat{v}$ mod $q$ is 
$$\ll \frac{x(\log_2 x)^{O(1)}}{\phi(q)^{2M} m\log x}.$$
Consequently,
$$\Sigma_1 \ll \frac{x(\log_2 x)^{O(1)}}{\log x} \largesum_{\substack{m \le x\\P_J(m) \le y}} \frac1m \cdot \frac{\#\mathcal V_{2M, M}\left(q; (b_i-g_i(m))_{i=1}^M\right)}{\phi(q)^{2M}}.$$
Using \eqref{eq:V2MMSqfreeq} to bound the cardinality $\#\mathcal V_{2M, M}\left(q; (b_i-g_i(m))_{i=1}^M\right)$ in conjunction with \eqref{eq:Sum1/mOverPJ(m)ley} to bound the resulting sum on $m$, we obtain
$$\Sigma_1 \ll \lambda^{\omega(q)}\frac{x(\log_2 x)^{O(1)}}{q^M (\log x)^{1/2}}  \exp\left(O\left((\log_3 x)^2\right)\right) \ll \frac x{q^M (\log x)^{1/3}},$$
showing the desired bound for $\Sigma_1$. 

\textit{Bounding $\Sigma_2, \dots , \Sigma_M$:} We start by making the following general observation: let $E$ be a set of primes and for a positive integer $N$, let $\OmStEN \coloneqq \largesum_{\substack{p^k \parallel n\\p \in E, ~ k>1}} k$ denote the number of prime divisors of $N$ (counted with multiplicity) lying in the set $E$ and appearing to an exponent greater than $1$ in the prime factorization of $N$. Then for any $t \ge 2$, any positive integer $N$ having $\OmStEN \ge t$ is divisible by $p_1^{\alpha_1} \cdots p_s^{\alpha_s}$ for some distinct primes $p_1, \dots, p_s \in E$, and integers $\alpha_1, \dots, \alpha_s \ge 2$ summing to $t$ or $t+1$. More precisely, there exist positive integers $s$, $m$, $\alpha_1, \dots , \alpha_s, \beta_1, \dots , \beta_s$ and distinct primes $p_1, \dots , p_s \in E$ such that $\alpha_1, \dots , \alpha_s \ge 2$, $\sum_{i=1}^s \alpha_i \in \{t, t+1\}$, $\gcd(m, p_1 \cdots p_s) = 1$, $N = m p_1^{\beta_1} \cdots p_s^{\beta_s}$ and $\beta_i \ge \alpha_i$ for all $i \in [s]$.

This is seen by a simple induction on $t$, the case $t = 2$ being clear with $(\alpha_1, \dots , \alpha_s) = (2)$ and the case $t=3$ being clear with $(\alpha_1, \dots , \alpha_s) \in \{(3), (2, 2)\}$. Consider any $T \ge 4$, assume that the result holds for all $t<T$, and let $N$ be a positive integer with $\OmStEN \ge T$. Let $p_1$ be the largest prime divisor of $N$ lying in the set $E$ and satisfying $p_1^2\mid n$, and let $\beta_1 \coloneqq v_{p_1}(N) \ge 2$. If $\beta_1 \ge T-1$, then we are done with $(\alpha_1, \dots , \alpha_s)$ being $(T)$ or $(T-1, 2)$, so suppose $\beta_1 \le T-2$. Then the positive integer $N' \coloneqq N/p_1^{\beta_1}$ is not divisible by $p_1$, and has $\Omega^*_E(N') \ge T-\beta_1 \ge T-(T-2) = 2$. As such, by the inductive hypothesis applied to $N'$ and $t \coloneqq T-\beta_1$, there exist $s, m, \alpha_2, \dots , \alpha_s, \beta_2, \dots , \beta_s$ and distinct primes $p_2, \dots , p_s \in E$ satisfying $\alpha_2, \dots , \alpha_s \ge 2$, $\sum_{i=2}^s \alpha_i \in \{T-\beta_1, T-\beta_1+1\}$, $\gcd(m, p_2 \cdots p_s) = 1$, $N' = m p_2^{\beta_2} \cdots p_s^{\beta_s}$ and $\beta_i \ge \alpha_i$ for all $i \in \{2, \dots , s\}$. Since $p_1 \nmid N'$, we see that the primes $p_1, \dots , p_s \in E$ must all be distinct and that $\gcd(m, p_1 \cdots p_s)=1$. Consequently, with $\alpha_1 \coloneqq \beta_1 \ge 2$, we have $N = p_1^{\beta_1} N' = m p_1^{\beta_1} p_2^{\beta_2} \cdots p_s^{\beta_s}$ with $\sum_{i=1}^s \alpha_i \in \{T, T+1\}$ and with $\beta_i \ge \alpha_i$ for all $i \in [s]$. This completes the induction step, establishing the claimed observation.

With this observation in hand, we note that for each $r \in \{1, \dots , M-1\}$, any $n$ counted in the sum $\Sigma_{r+1}$ is of the form $m p_1^{\beta} \cdots p_s^{\beta_s} P_{2M-2r} \cdots P_1$ where all of the following hold:
\begin{enumerate}
\item[(i)] $P_1 \coloneqq P(n)>z$; 
\item[(ii)] $q<P_{2M-2r} < \cdots < P_1$;
\item[(iii)] $p_1, \dots , p_s>q$;
\item[(iv)] $\beta_1 \ge \alpha_1, \dots , \beta_s \ge \alpha_s$ for some positive integers $\alpha_1, \dots , \alpha_s$ at least $2$ summing to either $\max\{2, 2r-1\}$ or to $2r$;
\item[(v)] $P_J(m) \le y$; 
\item[(vi)]$m, p_1, \dots , p_s,$ $P_{2M-2r}, \dots , P_1$ are all pairwise coprime.
\end{enumerate}
Indeed, any $n$ counted in $\Sigma_{r+1}$ is exactly divisible by at least $2M-2r$ but at most $2M-2r+1$ many primes (counted with multiplicity) exceeding $q$. Hence in the case $r=1$ we have $\OmStqn \ge 2$ while for $r \in \{2, \dots , M-1\}$, we have $\OmStqn \ge 2M-(2M-2r+1) \ge 2r-1$, so altogether $\OmStqn \ge \max\{2, 2r-1\}$. Let $P_1, P_2, \dots , P_{2M-2r}$ be primes exceeding $q$ that exactly divide $n$, and satisfy $P_1 \coloneqq P(n)>z$ and $P_{2M-2r} < \cdots < P_2<P_1$. Then with $n' \coloneqq n/P_1 \cdots P_{2M-2r}$, we still have $\Omega^*_{>q}(n') = \OmStqn \ge \max\{2, 2r-1\}$ and $\gcd(n', P_1 \cdots P_{2M-2r})=1$. Invoking the above observation for $N \coloneqq n'$, $t \coloneqq \max\{2, 2r-1\}$ and $E$ the set of primes exceeding $q$, we find that $n' = m p_1^{\beta} \cdots p_s^{\beta_s}$ for some $s \ge 1$, primes $p_1, \dots , p_s>q$ and positive integers $m, \beta_1, \dots , \beta_s$ such that $m, p_1, \dots , p_s$ are pairwise coprime, and $\beta_1 \ge \alpha_1, \dots , \beta_s \ge \alpha_s$ for some positive integers $\alpha_1, \dots , \alpha_s$ at least $2$ summing to either $\max\{2, 2r-1\}$ or $2r$. (Here, we have recalled that in the case $t=2$, the tuple $(\alpha_1, \dots , \alpha_s) = (2)$ was sufficient.)  Altogether, we find that $n = n' P_1 \cdots P_{2M-2r} = m p_1^{\beta} \cdots p_s^{\beta_s} P_1 \cdots P_{2M-2r}$, with $m, p_1, \dots , p_s, \beta_1, \dots , \beta_s, P_1, \dots , P_{2M-2r}$ satisfying the conditions (i)-(vi). 

Consequently, $g_i(n) = g_i(m p_1^{\beta_1} \cdots p_s^{\beta_s}) + \sum_{j=1}^{2M-2r} G_i(P_j)$, and the conditions $g_i(n) \equiv b_i \pmod q$ for $i \in [M-r]$ force $(P_1, \dots , P_{2M-2r}) \equiv \widehat{v}$ mod $q$ for some element $\widehat{v} \coloneqq (v_1, \dots , v_{2M-2r})$ of the set $\mathcal V_{2M-2r, M-r}\left(q; (b_i-g_i(m p_1^{\beta} \cdots p_s^{\beta_s}))_{i=1}^{M-r}\right)$. Given $m, s, \alpha_1, \dots , \alpha_s$, $p_1, \dots , p_s$, $\beta_1, \dots , \beta_s$ and $\widehat{v}$, the arguments in the previous section show that the number of possible $P_1, \dots ,$ $P_{2M-2r}$ satisfying $(P_1, \dots , P_{2M-2r}) \equiv \widehat{v}$ mod $q$ is 
$$\ll \frac{x(\log_2 x)^{O(1)}}{\phi(q)^{2M-2r} mp_1^{\beta_1} \cdots p_s^{\beta_s} \log x}.$$
Using \eqref{eq:V2MMSqfreeq} to bound the cardinality of the set $\mathcal V_{2M-2r, M-r}\left(q; (b_i-g_i(m p_1^{\beta} \cdots p_s^{\beta_s}))_{i=1}^{M-r}\right)$, we find that 
\begin{equation*}
\Sigma_{r+1} \ll \lambda^{\omega(q)} \frac{x (\log_2 x)^{O(1)}}{q^{M-r} \log x} \largesum_{\substack{m \le x\\P_J(m) \le y}} \frac1m \largesum_{\substack{s \ge 1; ~ \alpha_1, \dots , \alpha_s \ge 2\\\alpha_1 + \dots +\alpha_s \in \{2r-1, 2r\}}} ~ ~ \largesum_{\substack{p_1, \dots , p_s > q\\\beta_1 \ge \alpha_1, \dots , \beta_s \ge \alpha_s}} \frac1{p_1^{\beta_1} \cdots p_s^{\beta_s}}.
\end{equation*}
Now, the sum on $p_1, \dots , p_s, \beta_1, \dots , \beta_s$ is no more than
$$\largeprod_{i=1}^s \left(\largesum_{p_i>q} ~ \largesum_{\beta_i \ge \alpha_i} \frac1{p_i^{\beta_i}}\right) \ll \largeprod_{i=1}^s \left(\largesum_{p_i>q} \frac1{p_i^{\alpha_i}}\right) \ll \frac1{q^{\alpha_1+\cdots+\alpha_s-s}}.$$
In addition since $s \ge 1$ and $\sum_{i=1}^s \alpha_i \ge 2r-1$ and each $\alpha_i \ge 2$, we find that $\sum_{i=1}^s \alpha_i - s \ge r$: indeed, from the bound $\sum_{i=1}^s \alpha_i - s \ge 2s-s = s \ge 1$, it remains to only see that for $r \ge 2$, we have $\sum_{i=1}^s \alpha_i - s \ge \max\{s, 2r-1-s\} \ge r$. Collecting estimates, we obtain
$$\Sigma_{r+1} \ll \lambda^{\omega(q)} \frac{x (\log_2 x)^{O(1)}}{q^M \log x} \largesum_{\substack{m \le x\\P_J(m) \le y}} \frac1m \largesum_{\substack{s \ge 1; ~ \alpha_1, \dots , \alpha_s \ge 2\\\alpha_1 + \dots +\alpha_s \in \{2r-1, 2r\}}} 1.$$
But since there are $O(1)$ many possible $s \ge 1$ and tuples $(\alpha_1, \dots , \alpha_s)$ of positive integers summing to $2r-1$ or to $2r$, this automatically leads to 
$$\Sigma_{r+1} \ll \lambda^{\omega(q)} \frac{x (\log_2 x)^{O(1)}}{q^M \log x} \largesum_{\substack{m \le x\\P_J(m) \le y}} \frac1m.$$
As a consequence, \eqref{eq:Sum1/mOverPJ(m)ley} yields
$$\Sigma_{r+1} \ll \frac{\lambda^{\omega(q)} x}{q^M (\log x)^{1/2}} \exp\left(O\left((\log_3 x)^2\right)\right) \ll \frac x{q^M (\log x)^{1/3}},$$
yielding the desired bound for all of $\Sigma_2, \dots , \Sigma_M$.

\textit{Bounding $\Sigma$:} Any $n$ counted in $\Sigma$ has $2M$ many prime factors (counted with multiplicity) exceeding $q$, out of which at most one of them can exactly divide $n$. Hence $\OmStqn \ge 2M-1$, and by the same argument as given above, any $n$ counted in $\Sigma$ can be expressed in the form $m p_1^{\beta_1} \cdots p_s^{\beta_s} P$, where $P \coloneqq P(n)>z$, $p_1, \dots , p_s>q$ are primes, $P_J(m) \le y$, and $\beta_1 \ge \alpha_1, \dots , \beta_s \ge \alpha_s$ for some positive integers $\alpha_1, \dots , \alpha_s$ at least $2$ summing to either $2M-1$ or $2M$. Given $m, s, \alpha_1, \dots , \alpha_s, p_1, \dots , p_s, \beta_1, \dots , \beta_s$, the number of possible $P$ is $\ll x/mp_1^{\beta_1} \cdots p_s^{\beta_s} \log z$. As above, we have $\sum_{i=1}^s \alpha_i - s \ge \max\{s, 2M-1-s\} \ge M$, so that the sum over $s, \alpha_1, \dots , \alpha_s, p_1, \dots , p_s, \beta_1, \dots , \beta_s$ is $O(q^{-M})$. Finally, using \eqref{eq:Sum1/mOverPJ(m)ley} to bound the sum on $m$, we obtain $\Sigma \ll x/q^M(\log x)^{1/3}$. 

This completes the proof of \eqref{eq:SqfreeqUnif_Remaining}, and hence that of Theorem \ref{thm:restrictediput_squarefreeq}. \hfill \qedsymbol
\subsection{Optimality in the input restrictions in Theorem \ref{thm:restrictediput_squarefreeq}:}\label{subsec:OptimalitySqfree_RestrictedInput}
For any $M \ge 2$, we construct additive functions $g_1, \dots , g_M$ showing that the restriction $P_{2M}(n)>q$ cannot be weakened to $P_{2M-3}(n)>q$ in our range of $q$. For $M=2$, the condition $P_{2M-3}(n)>q$ translates to $P(n)>q$; by known estimates on smooth numbers (\cite[Theorem 5.13\text{ and }Corollary 5.19, Chapter III.5]{tenenbaum15}), this latter condition may be ignored up to a negligible error, so the first counterexample in subsection \ref{subsec:OptimalityUnrestricted} suffices. 

Now assume that $M \ge 3$; consider additive functions $g_1, \dots , g_M: \NatNos \rightarrow \Z$ defined by the polynomials $G_i(T) \coloneqq (T-1)^i$, and satisfying the conditions $g_i(p^2) \coloneqq 0$ for all primes $p$ and all $i \in [M]$. As observed in subsection \ref{subsec:OptimalityUnrestricted}, the polynomials $\{G_i'\}_{i=1}^M$ are $\Q$-linearly independent, and with $\COG$ as in \eqref{eq:COGDefn}, we have $q \in \Qgfam$ for all moduli $q$ having $P^-(q)>\COG$. 

We see that $G_i(p) \equiv 0 \pmod q$ for all $i$ and for all primes $p \equiv 1 \pmod q$. Consequently, if $p_1, \dots , p_{M-2}, P$ are primes satisfying $q< p_{M-2} < \cdots < p_1 < x^{1/(4M-8)} <$ $x^{1/3} < P \le x/(p_1 \cdots p_{M-2})^2$ and $P \equiv 1 \pmod q$, then the positive integer $n \coloneqq (p_1 \cdots p_{M-2})^2 P$ is less than or equal to $x$, has $P_{2M-3}(n) > q$ and satisfies the conditions $g_i(n) = G_i(P) + \sum_{j=1}^{M-2} g_i(p_j^2) \equiv 0 \pmod q$ for all $i \in \{1, \dots , M\}$. By the Siegel--Walfisz Theorem, we find that
\begin{equation*}
\begin{split}
\largesum_{\substack{n \le x: ~ P_{2M-3}(n)>q\\(\forall i) ~ g_i(n) \equiv 0 \pmod q}} 1  ~ &\ge \largesum_{\substack{q < p_{M-2} < \cdots < p_1 < x^{1/(4M-8)}}} ~ \largesum_{\substack{x^{1/3} < P \le x/(p_1 \cdots p_{M-2})^2\\ P \equiv 1 \pmod q}} 1\\
&\gg \largesum_{\substack{q < p_{M-2} < \cdots < p_1 < x^{1/(4M-8)}}} \left( \frac x{\phi(q) (p_1 \cdots p_{M-2})^2 \log x} + O(x^{1/3})\right)\\
&\gg \frac x{q\log x} \largesum_{\substack{p_1, \dots , p_{M-2}\text{ distinct}\\ q< p_1, \dots , p_{M-2} < x^{1/(4M-8)}}} \frac1{(p_1 \cdots p_{M-2})^2}
\end{split}
\end{equation*}
Ignoring the distinctness condition in the sum above incurs a total error
$$\ll \frac x{q\log x} \largesum_{p_1, p_2, \dots , p_{M-3}>q} \frac 1{p_1^4 p_2^2 \cdots p_{M-3}^2} \ll \frac x{q\log x} \left(\largesum_{p>q} \frac 1{p^4}\right) \left(\largesum_{p>q} \frac 1{p^2}\right)^{M-4} \ll \frac x{q^M \log x}.$$
On the other hand, 
$$\largesum_{\substack{p_1, \dots , p_{M-2} \in (q, x^{1/(4M-8)})}} \frac1{(p_1 \cdots p_{M-2})^2} = \left(\largesum_{q<p<x^{1/(4M-8)}} \frac1{p^2}\right)^{M-2} \gg \frac1{(q \log q)^{M-2}}.$$
Collecting estimates, we obtain for all sufficiently large $q$,
$$\largesum_{\substack{n \le x: ~ P_{2M-3}(n)>q\\(\forall i) ~ g_i(n) \equiv 0 \pmod q}} 1 \gg \frac x{q^{M-1}\log x (\log q)^{M-2}} + O\left(\frac x{q^M \log x}\right) \gg \frac x{q^{M-1}\log x (\log_2 x)^{M-2}},$$
which grows strictly faster than $x/q^M$ as soon as $q> \log x \cdot (\log_2 x)^{M-1}$ (say). We conclude that the condition $P_{2M}(n)>q$ cannot be replaced by $P_{2M-3}(n)>q$ for \textit{any} $M \ge 2$.

One might wonder whether one of the conditions $P_{2M-1}(n)>q$ or $P_{2M-2}(n)>q$ could possibly suffice to restore uniformity in squarefree $q \le (\log x)^K$. In this direction, we now construct an example showing that the condition $P_{2M-2}(n)>q$ is also insufficient for $M=2$. Indeed, let consider additive functions $g_1, g_2$ defined by the polynomials $G_1(T) \coloneqq T$ and $G_2(T) \coloneqq T^{3}$, so that $\{G_1', G_2'\}$ are clearly $\Q$-linearly independent. With $\COG$ as usual, we have $q \in \mathcal Q_{(g_1, g_2)}$ for all $q$ having $P^-(q)>\COG$. 

However, if $n$ is of the form $P_1 P_2$ for distinct primes $P_1, P_2>y \coloneqq \exp((\log x)^{1/2})$ satisfying $P_2 \equiv - P_1 \pmod q$, then $P_2(n)>y>q$, while $G_i(P_1) + G_i(P_2) \equiv 0 \pmod q$ for $i \in \{1, 2\}$, so that $g_1(n) \equiv g_2(n) \equiv 0 \pmod q$. As such, for $2<q \le (\log x)^K$, a simpler version of the arguments leading to \eqref{eq:CongrueceRemoved} yields
\begin{equation}\label{eq:P2M-2FirstEstim}
\begin{split}
\largesum_{\substack{n \le x:~ P_2(n)>q\\(\forall i) ~ g_i(n) \equiv 0 \pmod q}} 1 ~ ~ &\ge \largesum_{v \in U_q} ~ \frac1{2!}\largesum_{\substack{P_1, P_2>y\\P_1 \ne P_2, ~ P_1 P_2 \le x\\P_1 \equiv v, ~ P_2 \equiv -v \pmod q}} 1\\
&\gg \frac1{\phi(q)}\largesum_{\substack{P_1, P_2>y: ~P_1 P_2 \le x}} 1 + O(x\exp(-C'(\log x)^{1/4})) \gg \frac{x \log_2 x}{q \log x},
\end{split}
\end{equation}
where $C' \coloneqq C'(K)>0$ is a constant, and the last bound above is a simple consequence of Chebyshev's and Mertens' estimates. In particular, this shows that the tuple $(0, 0)$ mod $q$ is overrepresented by $(g_1, g_2)$ once $q>\log x/(\log_2 x)^{1/2}$, showing failure of uniformity in squarefree $q$ after a very small threshold, under the restriction $P_{2M-2}(n)>q$ for $M=2$. 

It is to be noted that our arguments above go through for any two polynomials $G_i(T) \coloneqq A_i T^{k_i} + B_i$ ($i \in \{1, 2\}$), for any two \textit{distinct} \textit{odd} positive integers $k_i$, and any integers $A_i \ne 0$ and $B_i$. Indeed, the distinctness of $k_1$ and $k_2$ ensures that $G_1'$ and $G_2'$ are $\Q$-linearly independent, while their parity ensures that any two primes $P_1, P_2$ satisfying $P_2 \equiv -P_1 \pmod q$ also satisfy $G_i(P_1) + G_i(P_2) \equiv 2B_i \pmod q$ for both $i \in \{1, 2\}$. As such, the above arguments show that there are $\gg x\log_2 x/q\log x$ many $n \le x$ satisfying $g_i(n) \equiv 2B_i \pmod q$ for $i \in \{1, 2\}$. This gives an infinite family of counterexamples showing that the condition $P_{2M-2}(n)>q$ is not sufficient to restore uniformity in squarefree $q \le (\log x)^K$ in the case $M=2$.

In conclusion, this means that our restriction $P_{2M}(n)>q$ in Theorem \ref{thm:restrictediput_squarefreeq} is at most ``one step away" from optimal, in the sense that it might still be possible to weaken it to $P_{2M-1}(n)>q$.
\section{Necessity of the linear independence hypothesis: Proof of Theorem \ref{thm:LIHNecess}}
Recall that the $\Q$-linear independence of $\{G_i'\}_{i=1}^{M-1}$ is equivalent to that of $\{G_i-G_i(0)\}_{i=1}^{M-1}$; likewise, the condition $G_M' = \sum_{i=1}^{M-1} a_i G_i'$ is exactly equivalent to the condition $G_M(T) - G_M(0) = \sum_{i=1}^{M-1} a_i (G_i(T)-G_i(0))$ in the ring $\Q[T]$. We claim that the polynomials $\{G_i\}_{i=1}^M$ are $\Q$-linearly independent. Indeed, suppose there exist integers $\beta_1, \dots , \beta_M$ for which $\sum_{i=1}^M \beta_i G_i(T) = 0$ in $\Q[T]$. Since $G_M(T) = G_M(0) + \sum_{i=1}^{M-1} a_i (G_i(T)-G_i(0))$, we find that
\begin{equation}\label{eq:LDrel}
\sum_{i=1}^{M-1} (\beta_i + \beta_M a_i) G_i(T) = \beta_M  \left(\sum_{i=1}^{M-1} a_i G_i(0) - G_M(0) \right),    
\end{equation}
so that $\sum_{i=1}^{M-1} (\beta_i + \beta_M a_i) (G_i(T) - G_i(0)) = 0$. Since $\{G_i(T)-G_i(0)\}_{i=1}^{M-1}$ are $\Q$-linearly independent, the last relation forces $\beta_i = -\beta_M a_i$ for all $i \in \{1, \dots , M-1\}$, which by \eqref{eq:LDrel} leads to
$$\beta_M \left(\sum_{i=1}^{M-1} a_i G_i(0) - G_M(0) \right) = 0.$$
Now if $\beta_M \ne 0$, then the above relation forces $\sum_{i=1}^{M-1} a_i G_i(0) = G_M(0)$ contrary to hypothesis. Hence, we must have $\beta_M = 0$, forcing $\beta_i = -\beta_M a_i = 0$ for all $i \in \{1, \dots , M-1\}$. This shows that $\{G_i\}_{i=1}^M$ are indeed $\Q$ linearly independent. 

As such by Corollary \ref{cor:JtEquidLargePD}(i) and the discussion preceding it, there exists a constant $C_1(\widehat{G})>0$ such that $\{G_i\}_{i=1}^M$ are $\F_\ell$-linearly independent for all $\ell>C_1(\widehat{G})$, and so $Q \in \Qgfam$ for all moduli $Q>1$ having $P^-(Q)>C_1(\widehat{G})$. In addition, since $\{G_i'\}_{i=1}^{M-1}$ are $\Q$-linearly independent, there exists (by \eqref{eq:COGDefn}) a constant $C_0(G_1, \dots , G_{M-1})>0$ such that $\{G_i'\}_{i=1}^{M-1}$ are $\F_\ell$-linearly independent for any $\ell>C_0(G_1, \dots , G_{M-1})$.

We set $\CGsub$ to be any constant exceeding $\max\{C_1(\widehat{G}), 4M(32D)^{2D+4}, C_0(G_1, \dots , G_{M-1})\}$ and henceforth consider moduli $q$ having $P^-(q)>\CGsub$, so that $q \in \Qgfam$. Given any $R>\CGsub$ and integers $\{b_i\}_{i=1}^{M-1}$, set $b_M \coloneqq G_M(0) R + \sum_{i=1}^{M-1} a_i (b_i - G_i(0)R)$. Then the relations $\sum_{j=1}^R G_i(v_j) \equiv b_i \pmod q$ for $i \in \{1, \dots , M-1\}$ also imply that $\sum_{j=1}^R G_M(v_j) \equiv b_M \pmod q$. As such, for any $R$ distinct primes $P_1, \dots , P_R$, with $(P_1, \dots , P_R)$ mod $q$ lying in the set 
$$V \coloneqq \mathcal V_{R, M-1}\left(q; (b_i)_{i=1}^{M-1}\right) = \left\{(v_j)_{j=1}^R \in (U_q)^R: (\forall i \in [M-1]) ~ \sum_{j=1}^R G_i(v_j) \equiv b_i \pmod q \right\},$$
we have $g_i(P_1 \cdots P_R) \equiv b_i \pmod q$ for \textit{all} $i \in [M]$. Letting $y \coloneqq \exp((\log x)^{1/2})$, a simpler version of the arguments leading to \eqref{eq:CongrueceRemoved} yields, for $q \le (\log x)^K$, 
\begin{align*}
\largesum_{\substack{n \le x:~ P_R(n)>q\\(\forall i) ~ g_i(n) \equiv b_i \pmod q}} 1 ~ ~ &\ge \largesum_{(v_1, \dots , v_R) \in V} ~ \frac1{R!}\largesum_{\substack{P_1, \dots , P_R>y\\P_1 \cdots P_R \le x\\P_1, \dots , P_R\text{ distinct}\\(\forall j) ~ P_j \equiv v_j \pmod q}} 1\\
&\gg \frac{\#V}{\phi(q)^R}\largesum_{\substack{P_1, \dots , P_R>y\\P_1 \cdots P_R \le x\\P_1, \dots , P_R\text{ distinct}}} 1 + O(x\exp(-C'(\log x)^{\delta/4}))\\ &\gg \frac{\#V}{\phi(q)^R}\largesum_{\substack{P_1, \dots , P_R>y\\P_1 \cdots P_R \le x}} 1 + O(x\exp(-C'(\log x)^{\delta/4}))
\end{align*}
for some constant $C' \coloneqq C'(K)>0$. A direct induction on $R$ (involving Chebyshev's estimate) shows that the last sum above is 
$$\largesum_{\substack{n \le x: ~ P^-(n)>y\\\Omega(n)=R}} 1 ~ \gg ~ \frac{x (\log_2 x)^{R-1}}{\log x},$$
leading to 
\begin{equation*}
\largesum_{\substack{n \le x:~ P_R(n)>q\\(\forall i) ~ g_i(n) \equiv b_i \pmod q}} 1 ~ ~ \gg ~ \frac{\#V}{\phi(q)^R} \cdot \frac{x (\log_2 x)^{R-1}}{\log x} + O(x\exp(-C'(\log x)^{\delta/4})).
\end{equation*}
As such, 
to complete the proof of the theorem, it remains to show that 
\begin{equation}\label{eq:VRqb1}
\#V = \#\mathcal V_{R, M-1}\left(q; (b_i)_{i=1}^{M-1}\right) \gg \frac{\phi(q)^R}{q^{M-1}}. 
\end{equation}
To show this, we argue as in the proof of the estimate \eqref{eq:Largeprime}: for each prime power $\ell^e \parallel q$, we write
\begin{multline*}
\#\mathcal V_{R, M-1}\left(\ell^e; (b_i)_{i=1}^{M-1}\right)\\ = \frac{\phi(\ell^e)^R}{\ell^{e(M-1)}} \left\{1+\frac1{\phi(\ell^e)^R} \largesum_{(r_1, \dots , r_{M-1}) \not\equiv (0, \dots , 0) \bmod \ell^e} e\left(-\frac1{\ell^e}\largesum_{i=1}^{M-1} r_i b_i\right) (Z_{\ell^e; \, r_1, \dots , r_{M-1}})^R\right\},
\end{multline*}
where $Z_{\ell^e; \, r_1, \dots , r_{M-1}} \coloneqq \largesum_{v \bmod \ell^e} \chi_{0, \ell}(v) e\left(\frac1{\ell^e}\largesum_{i=1}^{M-1} r_i G_i(v)\right)$ for each $(r_1, \dots , r_{M-1}) \not\equiv (0, \dots , 0) \bmod \ell^e$. Since $(r_1, \dots , r_{M-1}) \not\equiv (0, \dots , 0) \bmod \ell^e$, we have $\gcd(\ell^e, r_1, \dots , r_{M-1}) = \ell^{e-e_0}$ for some $1 \le e_0 \le e$ and $|Z_{\ell^e; \, r_1, \dots , r_{M-1}}| \le D\ell^{e-e_0/D}$ (here it is important that since $\ell>C_{\widehat{G}}$, the polynomials $\{G_i'\}_{i=1}^{M-1}$ are $\F_\ell$-linearly independent). We obtain
\begin{align*}
\frac1{\phi(\ell^e)^R} \largesum_{(r_1, \dots , r_{M-1}) \not\equiv (0, \dots , 0) \bmod \ell^e} |Z_{\ell^e; \, r_1, \dots , r_{M-1}}|^R &\le \frac{D^R \ell^{eR}}{\phi(\ell^e)^R} \largesum_{e_0 \ge 1} \left(\ell^{M-1-R/D}\right)^{e_0} \le \frac{2 (2D)^R}{\ell^{R/D-M+1}}.
\end{align*}
Since $R/D-M \ge R/(D+2)$ and $\ell^{1/(2D+4)} > (\CGsub)^{1/(2D+4)} > 32D$, this leads to
\begin{equation*}
\begin{split}
\frac1{\phi(\ell^e)^R} \largesum_{(r_1, \dots , r_{M-1}) \not\equiv (0, \dots , 0) \bmod \ell^e} |Z_{\ell^e; \, r_1, \dots , r_{M-1}}|^R &\le \frac{2(2D)^R}{\ell^{R/(D+2)}}\\ &\le \frac{2(2D)^R}{(32D)^R} \cdot \frac1{\ell^{R/(2D+4)}} \le \frac1{8^R \ell^{R/(2D+4)}} \le \frac1{8\ell^2}.
\end{split}
\end{equation*} 
Hence, for each prime power $\ell^e \parallel q$, 
\begin{equation}\label{eq:VR1PrimePower}
\#\mathcal V_{R, M-1}\left(\ell^e; (b_i)_{i=1}^{M-1}\right) \ge \frac{\phi(\ell^e)^R}{\ell^{e(M-1)}} \left(1-\frac1{8 \ell^2}\right),
\end{equation}
and since $\prod_{\ell \mid q} \left(1-\frac1{8 \ell^2}\right) \ge 1-\frac18 \sum_{\ell \ge 2}\frac1{\ell^2} \ge \frac78$, we obtain by multiplying all the bounds \eqref{eq:VR1PrimePower},
$$\#V = \prod_{\ell^e \parallel q} \#\mathcal V_{R, M-1}\left(\ell^e; (b_i)_{i=1}^{M-1}\right) \ge \frac78 \cdot \frac{\phi(q)^R}{q^{M-1}}.$$
This shows \eqref{eq:VRqb1}, completing the proof of Theorem \ref{thm:LIHNecess}, and demonstrating the necessity of the linear independence hypothesis in the generality of our setting. \hfill \qedsymbol

\section*{Acknowledgements}
This work was done in partial fulfillment of my PhD at the University of Georgia. As such, I would like to thank my advisor, Prof. Paul Pollack, for the past joint research that has led me to think about this question, as well as for his continued support and encouragement. I would also like to thank the Department of Mathematics at UGA for the Teaching Assistantship awarded by them, as well as for their support and hospitality. 

\providecommand{\bysame}{\leavevmode\hbox to3em{\hrulefill}\thinspace}
\providecommand{\MR}{\relax\ifhmode\unskip\space\fi MR }
\providecommand{\MRhref}[2]{
  \href{http://www.ams.org/mathscinet-getitem?mr=#1}{#2}
}
\providecommand{\href}[2]{#2}

\end{document}